\documentclass{article}
\usepackage[utf8]{inputenc}
\usepackage{amsmath,amssymb,amsthm,amsfonts,latexsym,graphicx,subfigure,mathtools}

\usepackage{url}
\usepackage{xcolor}

\newcommand{\R}{\mathbb{R}}
\DeclareMathOperator{\inp}{in}
\DeclareMathOperator{\out}{out}
\DeclareMathOperator{\id}{Id}

\theoremstyle{plain}
\newtheorem{theorem}{Theorem}[section]
\newtheorem{lemma}[theorem]{Lemma}
\newtheorem{proposition}[theorem]{Proposition}
\newtheorem{corollary}[theorem]{Corollary}
\newtheorem{conjecture}[theorem]{Conjecture}

\theoremstyle{definition}
\newtheorem{definition}[theorem]{Definition}
\newtheorem{example}[theorem]{Example}
\theoremstyle{remark}
\newtheorem{remark}[theorem]{Remark}

\usepackage[affil-it]{authblk}
\usepackage{booktabs}   % for \toprule et al
\usepackage[numbers,sort&compress]{natbib}

\usepackage[margin=3.5cm]{geometry}

\title{There is no going back:\\ Properties of the non-backtracking Laplacian}

\author[1,2]{Raffaella Mulas\thanks{r.mulas@vu.nl}}
\author[3]{Dong Zhang}
\author[2]{Giulio Zucal}
\affil[1]{Vrije Universiteit Amsterdam, Amsterdam, The Netherlands}
\affil[2]{Max Planck Institute for Mathematics in the Sciences, Leipzig, Germany}
\affil[3]{LMAM and School of Mathematical Sciences, Peking University, Beijing, China}

\date{}

\allowdisplaybreaks

\begin{document}

\maketitle
\begin{abstract} We prove new properties of the non-backtracking graph and the non-backtracking Laplacian for graphs. In particular, among other results, we prove that two simple graphs are isomorphic if and only if their corresponding non-backtracking graphs are isomorphic, and we investigate properties of various classes of non-backtracking Laplacian eigenfunctions, such as symmetric and antisymmetric eigenfunctions. Moreover, we introduce and study circularly partite graphs as a generalization of bipartite graphs, and we use this notion to state a sharp upper bound for the spectral gap from $1$. We also investigate the singular values of the non-backtracking Laplacian in relation to independence numbers, and we use them to bound the moduli of the eigenvalues.

\vspace{0.2cm}
\noindent {\bf Keywords:}  Non-backtracking graph; non-backtracking Laplacian; eigenvalues; eigenfunctions; singular values
\end{abstract}

\section{Introduction}
Non-backtracking walks and non-backtracking operators have been studied for more than three decades, and in various areas of graph theory. Hashimoto introduced the non-backtracking matrix in 1989, in the context of graph Zeta functions \cite{hashimoto1989zeta}, and as a continuation of his work, many results on this matrix have been proved in the area of algebraic graph theory \cite{terras2010zeta,bass1992ihara,cooper2009}. In 2015, Backhausz, Szegedy and Virág applied non-backtracking random walks to the study of Ramanujan graphings \cite{backhausz2015ramanujan}. Shrestha, Scarpino, and Moore in 2015 \cite{shrestha2015message}, then Castellano and Pastor-Satorras in 2018 \cite{castellano2018relevance}, applied the study of non-backtracking paths to epidemic spreading on networks. Moreover, the non-backtracking matrix has been shown to be very powerful in spectral graph theory, as well as in its applications to network analysis. We refer also to \cite{torres2021nonbacktracking,Leo2007,torres2020node,torres2019non,coste2021eigenvalues,glover2021some,pastor2020localization,bordenave2018nonbacktracking,krzakala2013spectral,martin2014localization,Arrigo1,Arrigo2,Gronford1,Mellor1,Godsil16,Huang19} for a vast\,---\,but not complete\,---\,literature on this topic.

Recently, Jost, Mulas and Torres \cite{NB-Laplacian} introduced the non-backtracking Laplacian and showed that its eigenvalues encode several structural properties of the graph in a more precise way than other known graph operators such as the adjacency matrix, the symmetric Laplacian matrices and the non-backtracking matrix. As a continuation of \cite{NB-Laplacian}, here we investigate new properties of the non-backtracking graph and the non-backtracking Laplacian. Before giving an overview of known and new properties, we give the basic notations and definitions that will be needed throughout the paper, following mainly \cite{NB-Laplacian}. \newline

%\subsection*{Background}

Fix a \emph{simple graph} $G=(V,E)$, that is, an undirected, unweighted graph without multi-edges and without loops. Assume that $G$ has $N$ nodes and $M$ edges. We define the \emph{degree} of a vertex $v$, denoted $\deg_G v$ or simply $\deg v$, as the number of edges in which it is contained, and we assume that all vertices have degree $\geq 1$, that is, there are no isolated vertices. If two vertices $v,w\in V$ are connected by an edge, we write $v\sim w$, or equivalently $w\sim v$, and we denote such edge by $(v,w)$, or equivalently by $(w,v)$.\newline
Choosing an \emph{orientation} for an edge means letting one of its endpoints be its \emph{input} and the other one be its \emph{output}. We let $e=[v, w]$ denote the oriented edge whose input is $v$ and whose output is $w$. In this case, we write $\inp(e) \coloneqq v$ and $\out(e) \coloneqq w$. Moreover, we let $e^{-1} \coloneqq [w, v]$.\newline
From here on, we also fix an arbitrary orientation for each edge. We let $e_1,\ldots,e_M$ denote the edges of $G$ with this fixed orientation, and we let 
$$e_{M+1} \coloneqq e^{-1}_1,\ldots,e_{2M} \coloneqq e_M^{-1}$$
denote the edges with the inverse orientation.

\begin{definition}
A \emph{non-backtracking random walk} on $G$ is a discrete-time Markov process on the oriented edges such that the probability of going from $e_i$ to $e_j$ is
\begin{equation*}
    \mathbb{P}(e_i\rightarrow e_j)=\begin{cases}\frac{1}{\deg (\out(e_i))-1} & \text{if }\out(e_i)=\inp(e_j) \text{ and }  \inp(e_i)\neq\out(e_j)\\
    0 & \text{otherwise.}
    \end{cases}
\end{equation*}
\end{definition}
We now define the non-backtracking matrix of $G$, as follows.
\begin{definition}
The matrix $B\coloneqq B(G)$ is the $2M\times 2M$ matrix with $(0,1)$--entries such that
\begin{equation*}
    B_{ij}=1\iff \out(e_i)=\inp(e_j) \text{ and }  \inp(e_i)\neq\out(e_j).
\end{equation*}
The \emph{non-backtracking matrix} of $G$ is $B^\top$, the transpose matrix of $B$.
\end{definition}

\begin{remark}
    Although we shall mainly focus on the matrix $B$, we choose to refer to $B^\top$ as the non-backtracking matrix because of historical reasons.
\end{remark}

Now, fix also a directed graph $\mathcal{G}=(\mathcal{V},\mathcal{E})$ on $\mathcal{N}$ nodes and $\mathcal{M}$ edges. 
If $\mathcal{G}$ has an edge from a vertex $v$ to a vertex $w$, we write $v\rightarrow w$ and we denote such an edge by $(v\rightarrow w)$. Given a vertex $v\in\mathcal{V}$, we define its \emph{outdegree} or simply its \emph{degree} as
\begin{equation*}
    \deg_{\mathcal{G}} v\coloneqq\deg v\coloneqq |\{w\in \mathcal{V}:(v\rightarrow w)\in \mathcal{E}\}|.
\end{equation*}We now define the following operators.
\begin{definition}
    The \emph{degree matrix} of $\mathcal{G}$ is the $\mathcal{N}\times \mathcal{N}$ diagonal matrix $\mathcal{D} \coloneqq \mathcal{D}(\mathcal{G}) \coloneqq (\mathcal{D}_{vw})_{v,w\in \mathcal{V}}$ whose diagonal entries are 
\begin{equation*}
    \mathcal{D}_{vv}\coloneqq\deg v.
\end{equation*}
The \emph{adjacency matrix} of $\mathcal{G}$ is the $\mathcal{N}\times \mathcal{N}$ matrix $\mathcal{A}\coloneqq\mathcal{A}(\mathcal{G})\coloneqq(\mathcal{A}_{vw})_{v,w\in \mathcal{V}}$ defined by
\begin{equation*}
    \mathcal{A}_{vw}\coloneqq\begin{cases}1 & \text{if }v\rightarrow w\\
    0 & \text{otherwise}.
    \end{cases}
\end{equation*}If $\mathcal{G}$ has minimum degree $\geq 1$, we also define its \emph{random walk Laplacian} as the $\mathcal{N}\times \mathcal{N}$ matrix
\begin{equation*}
   \mathcal{L}(\mathcal{G})\coloneqq\id-\mathcal{D}^{-1}\mathcal{A},
\end{equation*}where $\id$ denotes the $\mathcal{N} \times \mathcal{N}$ identity matrix.
\end{definition}

We are now ready to define the non-backtracking graph and the non-backtracking Laplacian of a simple graph $G$, as follows.
\begin{definition}
The \emph{non-backtracking graph} of $G$ is the directed graph $\mathcal{NB}(G)$ on vertices $e_1,\ldots,e_{2M}$, that has $B$ as adjacency matrix.\newline
For a simple graph $G$ with minimum degree $\geq 2$, the \emph{non-backtracking Laplacian} of $G$, denoted by $\mathcal{L}\coloneqq\mathcal{L}(G)$, is the random walk Laplacian $\mathcal{L}(\mathcal{NB}(G))$ of $\mathcal{NB}(G)$.
\end{definition}

For simplicity, we shall denote the non-backtracking graph of $G=(V,E)$ by $\mathcal{G}=(\mathcal{V},\mathcal{E})$. Clearly, if $G$ as $N$ nodes and $M$ edges, then $\mathcal{G}$ has $2M$ nodes. Moreover, as shown in \cite{NB-Laplacian}, $\mathcal{G}$ has $\sum_{v\in V}(\deg_{G}v)^2-2M$ edges, and if $G$ has minimum degree $\geq 2$, then the following are equivalent:
\begin{enumerate}
    \item $G$ is not the cycle graph;
    \item $G$ has at least two cycles;
    \item $\mathcal{G}$ is weakly connected;
    \item $\mathcal{G}$ is strongly connected.
\end{enumerate}

\begin{remark}
The \emph{line-digraph} proposed by Harary and Norman \cite{Harary1960} is given by the non-backtracking graph with the addition of all directed edges of the form $[v,w]\rightarrow [w,v]$.
\end{remark}

\begin{remark}
    Assume that $G$ has minimum degree $\geq 2$. The off-diagonal entries of the non-backtracking Laplacian of $G$ are given by
\begin{equation*}
    \mathcal{L}_{ij}=-\frac{B_{ij}}{\deg_{\mathcal{G}}(e_i)}=-\frac{B_{ij}}{\deg_G(\out(e_i))-1}=-\mathbb{P}(e_i\rightarrow e_j),
\end{equation*}for $i\neq j$. Therefore, the non-backtracking Laplacian encodes the probabilities of non-backtracking random walks on $G$. 
\end{remark}
We shall now summarize some of the main results from \cite{NB-Laplacian} on the non-backtracking Laplacian $\mathcal{L}$ associated with the simple graph $G$.
\begin{enumerate}
\item $\mathcal{L}$ has $2M$ eigenvalues (counted with algebraic multiplicity) that sum to $2M$ and are contained in the complex disc $D(1,1)$.
 \item The multiplicity of the eigenvalue $0$ for $\mathcal{L}$ equals the number of connected components of $\mathcal{G}$, and this coincides with the number of connected components of $G$ if none of them are a cycle graph.
 \item $2$ is an eigenvalue if $\mathcal{G}$ is bipartite,  which happens if and only if $G$ is bipartite.
    \item $\mathcal{L}$ is self-adjoint with respect to the \emph{$P$-product} $(\mathbf{x},\mathbf{y})_P \coloneqq\langle \mathbf{x}, P \mathbf{y} \rangle  = \overline{\mathbf{x}}^\top P \mathbf{y}$, where \begin{equation*}
    P\coloneqq\begin{pmatrix}
  \begin{matrix}
  0 & \id \\
\id & 0
  \end{matrix}
\end{pmatrix}
\end{equation*}is a $2M\times 2M$ matrix satisfying $P^\top=P$ and $P^2=\id$. As a consequence,
\item If $(\lambda,\mathbf{x})$ is an eigenpair for $\mathcal{L}$ and $\lambda\in\mathbb{C}$ is not real, then  
    \begin{equation*}
    \sum_{[v, w]}\overline{x_{[v, w]}}\cdot x_{[w, v]}=\sum_{i=1}^M\left(\overline{x_i}\cdot x_{i+M}+\overline{x_{i+M}}\cdot x_{i}\right)=0.
\end{equation*}
\item Let $\sigma(\mathcal{L})$ denote the spectrum of $\mathcal{L}$, seen as a multiset that contains the eigenvalues with their algebraic multiplicity. The spectral gap from $1$ for $\mathcal{L}$, $\varepsilon\coloneqq\min_{\lambda\in \sigma(\mathcal{L})}\left|1-\lambda\right|,$ satisfies the sharp inequality
\begin{equation*}
      \varepsilon \geq \frac{1}{\Delta-1},
 \end{equation*}where $\Delta$ denotes the maximum vertex degree of $G$.
 \item Most cycles of $G$ leave a signature on the spectrum of $\mathcal{L}$ that depends on the length of such cycles and on the degrees of the vertices that they contain. Two examples of this are given by the following results.

 \begin{theorem}[Theorem 5.4 in \cite{NB-Laplacian}]
     Let $d\in\mathbb{N}_{>1}$. If there exists a simple chordless cycle in $G$ whose vertices have constant degree $d$, then $1-\frac{1}{d-1}$ is an eigenvalue for $\mathcal{L}$. If, additionally, such a cycle is even, then also $1+\frac{1}{d-1}$ is an eigenvalue for $\mathcal{L}$.\newline 
Moreover, the geometric multiplicity of $1-\frac{1}{d-1}$ for $\mathcal{L}$ is larger than or equal to the number of $d$--regular simple chordless cycles in $G$, while the geometric multiplicity of $1+\frac{1}{d-1}$ for $\mathcal{L}$ is larger than or equal to the number of $d$--regular even simple chordless cycles in $G$.
 \end{theorem}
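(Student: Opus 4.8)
The plan is to exhibit eigenfunctions of $\mathcal{L}$ supported on the oriented edges running along the prescribed cycle. Let $C=(v_0,v_1,\dots,v_{k-1})$, with indices read modulo $k$, be a simple chordless cycle of $G$ all of whose vertices have degree $d$, and set $a_i\coloneqq[v_i,v_{i+1}]$ and $b_i\coloneqq a_i^{-1}=[v_{i+1},v_i]$. For a $k$-th root of unity $\omega$ and scalars $\alpha,\beta\in\mathbb{C}$, I define a function $f$ on the $2M$ oriented edges of $G$ by $f(a_i)\coloneqq\alpha\,\omega^{i}$, $f(b_i)\coloneqq\beta\,\omega^{i}$, and $f(e)\coloneqq 0$ for every oriented edge $e$ not lying on $C$. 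The aim is to choose $\omega,\alpha,\beta$ so that $\mathcal{L}f=\lambda f$ with $\lambda = 1\mp\frac{1}{d-1}$.

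The core of the argument is to check the eigenvalue equation $(\mathcal{L}f)(e)=f(e)-\frac{1}{\deg_G(\out(e))-1}\sum_{e\to e'}f(e')=\lambda f(e)$ edge by edge, which splits into three cases. For $e=a_m$: by chordlessness the unique non-backtracking successor of $a_m$ lying on $C$ is $a_{m+1}$ (the remaining $d-2$ successors leave $C$ and carry value $0$), so the equation becomes $\alpha\omega^{m}-\frac{1}{d-1}\alpha\omega^{m+1}=\lambda\alpha\omega^{m}$, giving $\lambda = 1-\frac{\omega}{d-1}$. Symmetrically, for $e=b_m$ the only relevant successor is $b_{m-1}$, giving $\lambda = 1-\frac{\omega^{-1}}{d-1}$. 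For an off-cycle oriented edge $e$ with $\out(e)$ equal to a cycle vertex $v_i$: again by chordlessness the successors of $e$ that lie on $C$ are exactly $a_i$ and $b_{i-1}$, and since $f(e)=0$ the equation reduces to $\alpha\omega^{i}+\beta\omega^{i-1}=0$; every other off-cycle edge has all of its successors off $C$ and the equation holds trivially. Comparing the first two cases forces $\omega=\omega^{-1}$, i.e. $\omega\in\{1,-1\}$, and the third forces $\beta=-\alpha\omega$.

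Taking $\omega=1$, which is admissible for every $k$, gives $\beta=-\alpha$ and $\lambda = 1-\frac{1}{d-1}$; taking $\omega=-1$, which is admissible precisely when $k$ is even, gives $\beta=\alpha$ and $\lambda = 1+\frac{1}{d-1}$. In both cases $f\neq 0$ (for instance $f(a_0)=\alpha$), so both eigenvalue assertions follow. For the multiplicity statements I would associate to each $d$-regular simple chordless cycle $C^{(t)}$ the eigenfunction $f^{(t)}$ produced as above with $\omega=1$, and to each even such cycle also the eigenfunction with $\omega=-1$; the geometric multiplicities of $1-\frac{1}{d-1}$ and $1+\frac{1}{d-1}$ are then at least the number of linearly independent functions among these. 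When the cycles involved are pairwise edge-disjoint the $f^{(t)}$ have pairwise disjoint supports and independence is immediate, which already yields the stated bounds in that regime.

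I expect the main obstacle to be establishing linear independence when the chordless cycles share edges. For $\omega=1$ the eigenfunction $f^{(t)}$ is, up to a scalar, the signed indicator of the oriented cycle $C^{(t)}$, so any relation among the $f^{(t)}$ is exactly a relation among these cycles in the real cycle space of $G$; one must therefore show that a family of $d$-regular simple chordless cycles is independent there (and likewise for the even ones). I would attempt this by fixing a spanning forest of $G$, writing each chordless cycle in terms of the fundamental cycles it crosses, and exhibiting for each cycle a non-tree edge that distinguishes it from the others. This is the delicate point of the proof; the edge-disjoint case treated above requires none of it.
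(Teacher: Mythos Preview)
This paper does not contain a proof of the quoted result; it is cited from \cite{NB-Laplacian} as background, so there is no argument here to compare yours against. I can only assess your proposal on its own merits.

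Your construction of the eigenfunctions is correct and clean. The case analysis is right: chordlessness is precisely what guarantees that $a_m$ has $a_{m+1}$ as its unique on-cycle successor, that $b_m$ has $b_{m-1}$ as its unique on-cycle successor, and that an off-cycle edge $[u,v_i]$ with $u\notin\{v_{i-1},v_{i+1}\}$ sees both $a_i$ and $b_{i-1}$ among its successors. The resolution $\omega\in\{1,-1\}$, $\beta=-\alpha\omega$ is exactly what is needed, and this establishes the two eigenvalue assertions as well as the multiplicity bounds in the edge-disjoint regime.

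The gap you flag in the general multiplicity statement is genuine, and the route you sketch cannot close it. For $\omega=1$ your $f^{(t)}$ is the antisymmetric signed indicator of $C^{(t)}$, so a linear relation among the $f^{(t)}$ is exactly a relation in the real cycle space of $G$; but distinct $d$-regular simple chordless cycles are \emph{not} in general independent there. Take $G=K_{2,2,2}$, the octahedron. It is $4$-regular, so every simple chordless cycle is $4$-regular, and one counts eight triangles and three chordless $4$-cycles, eleven in all. Yet the cycle space has dimension $M-N+1=12-6+1=7$, and correspondingly (by Ihara--Bass, or by the remark in this paper following Proposition \ref{prop:-f}) the geometric multiplicity of $1-\tfrac{1}{d-1}=\tfrac{2}{3}$ for $\mathcal{L}$ is exactly $7<11$. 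Hence the multiplicity clause, read literally as reproduced here, is too strong to be provable; your edge-disjoint argument already delivers what this construction can actually yield, and you should not expect a fundamental-cycle argument to push it further.
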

 Similarly,
 \begin{theorem}[Theorem 5.5 in \cite{NB-Laplacian}]
     Let $d\in\mathbb{N}_{>2}$. If there exists a simple chordless cycle of length $\ell$ in $G$ such that one vertex has degree $d$ while all other vertices have degree $2$, then $1-\frac{1}{\sqrt[\ell]{d-1}}$ is an eigenvalue for $\mathcal{L}$. If, additionally, such cycle is even, then also $1+\frac{1}{\sqrt[\ell]{d-1}}$ is an eigenvalue for $\mathcal{L}$.\newline 
Moreover, the geometric multiplicity of $1-\frac{1}{\sqrt[\ell]{d-1}}$ for $\mathcal{L}$ is larger than or equal to the number of such cycles in $G$, while the geometric multiplicity of $1+\frac{1}{\sqrt[\ell]{d-1}}$ for $\mathcal{L}$ is larger than or equal to the number of such even cycles in $G$.
 \end{theorem}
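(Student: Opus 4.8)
Write the cycle as $C\colon v_0\sim v_1\sim\cdots\sim v_{\ell-1}\sim v_0$ with indices read mod $\ell$, where $\deg_G v_0=d$ and $\deg_G v_i=2$ for $i\neq 0$, and denote its $2\ell$ oriented edges by $a_i\coloneqq [v_i,v_{i+1}]$ and $b_i\coloneqq [v_{i+1},v_i]=a_i^{-1}$ for $i=0,\dots,\ell-1$. The plan is to construct an eigenfunction $\mathbf{x}$ of $\mathcal{L}$ supported on $\{a_0,\dots,a_{\ell-1},b_0,\dots,b_{\ell-1}\}$ and vanishing on every other oriented edge of $G$. Writing the eigenvalue equation $\mathcal{L}\mathbf{x}=\lambda\mathbf{x}$ coordinatewise, for each oriented edge $e$ one needs $\sum_{e\to f}x_f=(1-\lambda)\bigl(\deg_G(\out(e))-1\bigr)x_e$.

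First I would impose this equation along $C$. Since $\deg_G v_i=2$ for $i\neq 0$, the edge $a_i$ with $i\le \ell-2$ has the unique non-backtracking successor $a_{i+1}$, and $b_i$ with $i\ge 1$ has the unique successor $b_{i-1}$, so the equations collapse to the recursions $x_{a_{i+1}}=(1-\lambda)x_{a_i}$ and $x_{b_{i-1}}=(1-\lambda)x_{b_i}$. At $v_0$, the successors of $a_{\ell-1}$ are $a_0$ together with the $d-2$ edges leaving $C$ at $v_0$ (on which $\mathbf{x}$ is $0$), and symmetrically for $b_0$, giving $x_{a_0}=(1-\lambda)(d-1)x_{a_{\ell-1}}$ and $x_{b_{\ell-1}}=(1-\lambda)(d-1)x_{b_0}$. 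Normalising $x_{a_0}=1$ and telescoping yields $x_{a_i}=(1-\lambda)^i$ and the closure relation $(1-\lambda)^\ell=\tfrac{1}{d-1}$, whose real solutions are $\lambda=1-\tfrac{1}{\sqrt[\ell]{d-1}}$ and, when $\ell$ is even, also $\lambda=1+\tfrac{1}{\sqrt[\ell]{d-1}}$ (both lie in $(0,2)$, hence in $D(1,1)$). The backward recursion is solved by $x_{b_j}=c\,(1-\lambda)^{\ell-1-j}$ for a free scalar $c$, which automatically satisfies the $v_0$-equation for $b_0$ given the same closure relation.

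The delicate point is that $\mathbf{x}$ must also satisfy the eigenvalue equation at edges \emph{not} on $C$. Because $\deg_G v_i = 2$ for $i\neq 0$, an off-$C$ edge $e$ can have a successor in the support only if $\out(e)=v_0$, i.e.\ $e=[u,v_0]$ with $u$ a neighbour of $v_0$ outside $C$; the cycle edges with input $v_0$ are exactly $a_0$ and $b_{\ell-1}$, and both are non-backtracking successors of $e$. Thus the equation at $e$ reduces to $x_{a_0}+x_{b_{\ell-1}}=0$, i.e.\ $c=-1$. With this choice a direct check shows that $\mathbf{x}$ — which is nonzero since $x_{a_0}=1$ — satisfies $\mathcal{L}\mathbf{x}=\lambda\mathbf{x}$ on all $2M$ coordinates; note that $d>2$ guarantees that edges $[u,v_0]$ of this kind exist, so this is exactly the mechanism forcing the two orientations of $C$ to appear together with opposite signs, in contrast with the pure-cycle situation. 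This establishes that $1-\tfrac{1}{\sqrt[\ell]{d-1}}$, and $1+\tfrac{1}{\sqrt[\ell]{d-1}}$ when $\ell$ is even, are eigenvalues of $\mathcal{L}$.

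For the multiplicity bounds I would show that two distinct cycles satisfying the hypothesis are edge-disjoint: every edge of such a cycle has at least one endpoint of degree $2$, and a degree-$2$ vertex determines the entire cycle through it (walk away from it in both directions, each step forced by the degree-$2$ condition, until the unique degree-$d$ vertex is reached), so a common edge — hence a common degree-$2$ vertex — would force two such cycles to coincide. Consequently the eigenfunctions built above from distinct such cycles — respectively, from distinct such even cycles — have pairwise disjoint supports and are therefore linearly independent, so the geometric multiplicity of $1-\tfrac{1}{\sqrt[\ell]{d-1}}$ (resp.\ of $1+\tfrac{1}{\sqrt[\ell]{d-1}}$) is at least the number of such (resp.\ such even) cycles in $G$. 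I expect the only genuine work to be the careful verification of the off-$C$ equations together with the rigidity argument for edge-disjointness; the on-$C$ part is a routine telescoping computation.
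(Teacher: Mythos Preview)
The present paper does not prove this statement: it is quoted verbatim from \cite{NB-Laplacian} in the introduction as background, so there is no in-paper proof to compare against. That said, your argument is correct. The eigenvalue equation on and off the cycle is set up properly; the telescoping gives the closure relation $(1-\lambda)^\ell(d-1)=1$; the off-cycle constraint $x_{a_0}+x_{b_{\ell-1}}=0$ is exactly what forces $c=-1$; and your rigidity argument (a shared edge forces a shared degree-$2$ vertex, which in turn determines the whole cycle by walking through degree-$2$ vertices) cleanly yields edge-disjointness and hence linear independence of the eigenfunctions.

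Your construction is also the natural one and matches in spirit what the paper itself does in the proof of Theorem~\ref{thm:petal} for petal graphs, where the eigenfunction is defined by powers of $\lambda$ along one orientation of a petal and the negatives of the corresponding powers along the reverse orientation, and is zero elsewhere. So while there is no proof here to benchmark against, your approach is precisely the one the authors employ for the closely related result they do prove.
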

 \item 
 \begin{table}[h]
\centering
\begin{tabular}{r|r|rr|rr}
\toprule
$N$ &  \#graphs &      $A$ &    $L$ &  $\mathcal{A}$ &  $\mathcal{L}$ \\
\midrule
$\le$6  &         76 &          0 &       2 &               0 &              0 \\
7  &        510 &         26 &       4 &               0 &              0 \\
8  &      7 459 &        744 &      11 &               2 &              0 \\
9  &    197 867 &     32 713 &     243 &               6 &              0 \\
10 &  9 808 968 &  1 976 884 &  16 114 &          10 130 &            156 \\
\midrule
total & 10 014 880 & 2 010 367 & 16 374 & 10 138 & 156 \\
\bottomrule
\end{tabular}
\caption{(Table 2 in \cite{NB-Laplacian}). Graphs with minimum degree $\ge 2$ not determined by their spectrum, by number of nodes $N$, with respect to the adjacency matrix $A$, the normalized Laplacian $L$, the non-backtracking matrix $\mathcal{A}$ and the non-backtracking Laplacian $\mathcal{L}$.}
\label{tab:md2}
\end{table}
 Two graphs are \emph{cospectral} with respect to a given matrix if they have the same spectrum with respect to that matrix, but they are not isomorphic. In \cite{NB-Laplacian}, computations for graphs with small number of nodes have suggested that the non-backtracking Laplacian has nicer cospectrality properties than the other matrices which are typically considered in spectral graph theory, including the non-backtracking matrix. One example of such computations is given by Table \ref{tab:md2} below, which shows the number of simple graphs with minimum degree $\ge 2$ which are not determined by their spectrum with respect to the adjacency matrix, the normalized Laplacian, the non-backtracking matrix and the non-backtracking Laplacian. 
\end{enumerate}

In summary, the non-backtracking Laplacian is already known to have various notable spectral properties. Since it has been recently defined, it leaves the door open for further theoretical results as well as future applications to network science. Moreover, investigating new properties is made challenging by the fact that it is a non-symmetric matrix. Here in this paper, we make further contributions in the theoretical direction. We prove new results on the non-backtracking graph, and several new results on the non-backtracking Laplacian.\newline 

%\subsection*{Structure of the paper}
%The rest of this paper is structured as follows. In Sections \ref{section:NBgraph} and \ref{section:efunctions} we prove some new properties of the non-backtracking graph and the eigenfunctions of $\mathcal{L}$, respectively. In Section \ref{section:partite}, we then generalize the notion of bipartite graphs by introducing and investigating circularly $k$-partite graphs. This will be needed in Section \ref{section:gap1}, where we shall prove a sharp upper bound for $\varepsilon$, the spectral gap from $1$. Finally, in Section \ref{section:singular} we study the singular values of the non-backtracking Laplacian in relation to independence numbers, and in Section \ref{section:gapR} we prove various bounds for the modulus of the eigenvalues of $\mathcal{L}$, also involving its singular values. 

The rest of this paper is structured as follows. In Sections \ref{section:NBgraph} and \ref{section:efunctions}, we prove some new properties of the non-backtracking graph and the eigenfunctions of $\mathcal{L}$, respectively. In Section \ref{section:partite}, we investigate the following family of graphs. Given $k\geq 1$, a simple graph $G=(V,E)$ is said to be \emph{circularly $k$--partite} if the set of its oriented edges can be partitioned as $\mathcal{V}=\mathcal{V}_1\sqcup \ldots \sqcup \mathcal{V}_k$, where the sets $\mathcal{V}_j$ are non-empty and satisfy the property that
\begin{equation*}
    [v,w]\in \mathcal{V}_j \Longrightarrow [w,z]\in \mathcal{V}_{j+1}, \, \forall z\sim w\,:\, z\neq v,
\end{equation*}where we also let $\mathcal{V}_0:=\mathcal{V}_k$ and $\mathcal{V}_{k+1}:=\mathcal{V}_1$. One can check that all graphs are $1$-circularly partite, and $2$-circularly partite graphs are precisely the bipartite graphs. Therefore, this new family of graphs is a generalization of bipartite graphs. This will be needed in Section \ref{section:gap1}, where we shall prove that, if a graph is circularly $k$--partite, then the spectral gap from $1$ satisfies
\begin{equation*}
    \varepsilon\leq \frac{1}{\sqrt[2M-k]{\prod_{v\in V}(\deg v -1)^{\deg v-1}}},
\end{equation*}and the bound is sharp. Moreover, in Section \ref{section:singular} we study the singular values of the non-backtracking Laplacian in relation to independence numbers, and this is motivated by the long history of inequalities involving independence numbers and eigenvalues in the context of spectral graph theory. Finally, in Section \ref{section:gapR} we prove various bounds for the modulus of the eigenvalues of $\mathcal{L}$, also involving its singular values.

\section{Non-backtracking graph}\label{section:NBgraph}

Our first results relate to the non-backtracking graph. We start by the following. In the introduction, we have mentioned the fact that, if a simple graph $G$ has $N$ nodes and $M$ edges, then  $\mathcal{G}$ has $2M$ nodes and $\sum_{v\in V}(\deg_{G}v)^2-2M$ edges. We now consider the inverse problem:

\begin{lemma}\label{lemma:VertexCount}Let $G$ be a simple graph, and let $\mathcal{G}$ be its non-backtracking graph. If $\mathcal{G}$ has $\mathcal{N}$ nodes, then $G$ has $\mathcal{N}/2$ edges. Moreover, assume that the nodes of $\mathcal{G}$ have degrees $\{d_1-1,\ldots,d_k-1\}$. Then, for each $j=1,\ldots,k$, there exists $c_j\in\mathbb{N}_{\geq 1}$ such that $\mathcal{G}$ has exactly $c_j\cdot d_j$ edges of degree $d_j-1$. Also, $G$ has exactly $c_j$ vertices of degree $d_j$, and the total number of vertices of $G$ is $c_1+\ldots+c_k$.
\end{lemma}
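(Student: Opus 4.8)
The plan is to reconstruct all the degree data of $G$ from that of $\mathcal{G}$ using the single elementary observation that the degree (out-degree) of a node $[v,w]$ of $\mathcal{G}$ depends only on its output vertex $w$. The statement that $G$ has $\mathcal{N}/2$ edges is immediate: the vertex set of $\mathcal{G}$ is $\{e_1,\dots,e_{2M}\}$, so $\mathcal{N}=2M$ and hence $M=\mathcal{N}/2$.

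For the rest, the key step I would isolate is the following. For any oriented edge $e=[v,w]$ (i.e.\ any node of $\mathcal{G}$), the out-neighbours of $e$ in $\mathcal{G}$ are exactly the oriented edges $[w,z]$ with $z\sim w$ and $z\neq v$ — this is just the defining condition $\out(e)=\inp(\cdot)$, $\inp(e)\neq\out(\cdot)$ for $B$ — so
\[
\deg_{\mathcal{G}}(e)=\deg_{\mathcal{G}}([v,w])=\deg_G(w)-1=\deg_G(\out(e))-1 .
\]
Conversely, a vertex $w$ of $G$ with $\deg_G(w)=d$ is the output vertex of exactly $d$ oriented edges of $G$, namely $[z,w]$ for $z\sim w$, and each of these is a node of $\mathcal{G}$ of degree $d-1$.

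Now I would set $c_j:=|\{\,w\in V:\deg_G(w)=d_j\,\}|$ for $j=1,\dots,k$. Grouping the nodes of $\mathcal{G}$ of degree $d_j-1$ according to their output vertex and using both directions of the observation above, the number of such nodes equals $\sum_{w:\deg_G(w)=d_j}d_j=c_j\cdot d_j$. Since $d_j-1$ is by hypothesis an actual node-degree of $\mathcal{G}$, this count is positive, so $c_j\geq 1$ (this also covers the edge case $d_j=1$, where degree-$1$ vertices of $G$ correspond to sinks of $\mathcal{G}$). To finish, note that $d\mapsto d-1$ is a bijection from the set of degree values attained in $G$ onto the set attained in $\mathcal{G}$: it is visibly injective, and it is onto because every node-degree of $\mathcal{G}$ has the form $\deg_G(w)-1$, while every $w\in V$ (having $\deg_G w\geq 1$) is the output of at least one node of $\mathcal{G}$. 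As the degree values of $\mathcal{G}$ are $\{d_1-1,\dots,d_k-1\}$, the degree values of $G$ are $\{d_1,\dots,d_k\}$, so $V$ is partitioned into the $k$ classes counted by $c_1,\dots,c_k$, and $|V|=c_1+\dots+c_k$.

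I do not expect a genuine obstacle here: the argument is bookkeeping built on the degree formula $\deg_{\mathcal{G}}([v,w])=\deg_G(w)-1$. The only points requiring care are reading ``$\mathcal{G}$ has exactly $c_j\cdot d_j$ edges of degree $d_j-1$'' as a statement about \emph{nodes} of $\mathcal{G}$ (which are precisely the oriented edges of $G$), and verifying the bijection between the two sets of attained degree values, so that ``total number of vertices'' really is $c_1+\dots+c_k$ with no class of $V$ left unaccounted for.
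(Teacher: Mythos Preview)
Your proof is correct and follows essentially the same approach as the paper: both rest on the observation that $\deg_{\mathcal{G}}([v,w])=\deg_G(w)-1$, so the $c_j\cdot d_j$ nodes of $\mathcal{G}$ of degree $d_j-1$ are exactly those with output a degree-$d_j$ vertex of $G$. You are in fact more careful than the paper, explicitly verifying $c_j\geq 1$ and the bijection between attained degree values, points the paper leaves implicit in ``The claim follows.''
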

\begin{proof}The fact that $G$ has $\mathcal{N}/2$ edges is straightforward. Now, we know that, for each vertex $v$ in $G$, there are exactly $\deg v$ vertices in $\mathcal{G}$ that have $v$ as an output, and these have degree $\deg v-1$. Hence, if there are exactly $c$ vertices of degree $d$ in $G$, then there are exactly $c\cdot d$ vertices in $\mathcal{G}$ that have degree $d-1$. The claim follows.
\end{proof}

\begin{theorem}
Two simple graphs are isomorphic if and only if their corresponding non-backtracking graphs are isomorphic.
\end{theorem}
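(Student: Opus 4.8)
The plan is to prove both directions, with the forward direction being essentially trivial and the reverse direction requiring a reconstruction argument. First, if $G_1 \cong G_2$ via a graph isomorphism $\varphi : V_1 \to V_2$, then $\varphi$ induces a bijection on oriented edges via $[v,w] \mapsto [\varphi(v), \varphi(w)]$, and this bijection preserves the non-backtracking adjacency relation ($\out(e) = \inp(f)$ and $\inp(e) \neq \out(f)$) because $\varphi$ preserves $\sim$, incidences, and distinctness of vertices. Hence $\mathcal{NB}(G_1) \cong \mathcal{NB}(G_2)$ as directed graphs.

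For the converse, the idea is to show that $G$ can be recovered from $\mathcal{G} = \mathcal{NB}(G)$ up to isomorphism, i.e., to exhibit a canonical construction sending $\mathcal{G}$ back to $G$ that is invariant under directed-graph isomorphism. The key structural observation is the ``pairing'' of vertices of $\mathcal{G}$: each vertex of $\mathcal{G}$ is an oriented edge $[v,w]$, and its \emph{partner} is $[w,v]$. I would characterize this partner relation purely in terms of the directed-graph structure of $\mathcal{G}$: two vertices $e, e'$ of $\mathcal{G}$ are partners precisely when they are the unique pair that are \emph{not} joined by an edge in either direction even though $\out(e) = \inp(e')$ — more robustly, $[v,w]$ and $[w,v]$ are distinguished by the fact that $\{f : e \to f \text{ in } \mathcal{G}\}$ and $\{f : f \to e' \text{ in } \mathcal{G}\}$ relate through a common vertex set. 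Concretely: the out-neighbors of $[v,w]$ in $\mathcal{G}$ are exactly $\{[w,z] : z \sim w, z \neq v\}$ and the in-neighbors of $[w,v]$ in $\mathcal{G}$ are exactly $\{[z,w] : z \sim w, z \neq v\}$; these two sets are ``partners'' of each other, and $[w,v]$ is the unique vertex whose out-neighbors are the partners of the in-neighbors of $[v,w]$ together with... — this needs to be set up carefully via an inductive/fixed-point characterization of the partner involution $\iota$, and I expect this to be the main obstacle.

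Once the partner involution $\iota : \mathcal{V} \to \mathcal{V}$ is shown to be canonically definable from the abstract directed graph $\mathcal{G}$ (so that any isomorphism $\mathcal{NB}(G_1) \cong \mathcal{NB}(G_2)$ automatically commutes with $\iota_1, \iota_2$), the rest follows: define an equivalence on $\mathcal{V}$ by declaring $e \approx f$ if $\out(e) = \out(f)$, which in graph terms says $\iota(e)$ and $\iota(f)$ have the same in-neighborhood-structure, equivalently $e, f$ lie in a common ``bundle''; the vertex set of the reconstructed graph is $\mathcal{V}/\!\approx$ (this recovers $V$, with the $c_j$ classes of size $d_j$ as in Lemma~\ref{lemma:VertexCount}), and one places an edge between two classes $[e]$ and $[f]$ whenever there is a representative $e$ with $\iota(e) \approx f$. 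Finally one checks this reconstructed graph is isomorphic to $G$ and that the construction is functorial with respect to directed-graph isomorphisms, so $\mathcal{NB}(G_1) \cong \mathcal{NB}(G_2)$ implies $G_1 \cong G_2$.

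The crux is thus the canonical, isomorphism-invariant recovery of the involution $[v,w] \leftrightarrow [w,v]$ from the bare directed graph $\mathcal{G}$; I would likely handle this by characterizing partners through the observation that $e$ and $\iota(e)$ are precisely the two vertices of $\mathcal{G}$ lying ``over'' the same undirected edge, detectable because reversing all arrows of $\mathcal{G}$ and applying $\iota$ coordinatewise is again an automorphism-type symmetry — equivalently, $\iota$ is forced by the requirement that $\mathcal{G}$ with the added edges $[v,w] \to [w,v]$ be the line digraph (cf.\ the remark on Harary--Norman), and line digraphs of simple graphs have a known unique such structure. One should double-check the low-degree edge cases (e.g.\ where two vertices have degree $2$, forcing out-degree $1$ in $\mathcal{G}$) do not create ambiguity in the partner characterization; minimum degree $\geq 2$ and the connectivity equivalences quoted in the introduction should rule these out, but this is where I would be most careful.
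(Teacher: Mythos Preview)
Your forward direction is fine and matches the paper's argument exactly.

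For the converse, however, there is a genuine gap. You correctly identify the crux as recovering the partner involution $\iota:[v,w]\mapsto[w,v]$ from the abstract directed graph $\mathcal{G}$, but you never actually do it: each of the sketches you offer (matching out-neighbourhoods with in-neighbourhoods, appealing to the line-digraph remark, invoking an ``automorphism-type symmetry'') is left unfinished, and you flag the degree-$2$ case as a worry without resolving it. That worry is real. For example, if $v$ has degree $2$ with neighbours $a,b$, then $[v,a]$ has sole in-neighbour $[b,v]$ and $[v,b]$ has sole in-neighbour $[a,v]$, so your ``common in-neighbourhood'' style tests collapse; and for a cycle $C_n$, $\mathcal{G}$ is two disjoint directed $n$-cycles, where no canonical pairing between the two components is forced by the directed-graph structure alone. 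So ``$\iota$ is canonically definable from $\mathcal{G}$'' is simply false in general, and your strategy, as stated, cannot be completed.

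The paper sidesteps this entirely. Given an isomorphism $\gamma:\mathcal{V}\to\mathcal{V}'$, it builds the vertex map $g:V\to V'$ directly: for $v$ of degree $\ge 2$ and any two distinct neighbours $w_1,w_2$, the arrow $[w_1,v]\to[v,w_2]$ in $\mathcal{G}$ forces $\out(\gamma([w_1,v]))=\inp(\gamma([v,w_2]))$ in $G'$, and this common vertex is declared to be $g(v)$; for $\deg v=1$ one sets $g(v):=\out(\gamma([w,v]))$. One then checks that $g$ preserves adjacency and is a bijection (surjectivity from surjectivity of $\gamma$, injectivity by pigeonhole using $|V|=|V'|$ from Lemma~\ref{lemma:VertexCount}). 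No attempt is made to recover $\iota$, and none is needed: the vertex map is read off from how $\gamma$ acts on \emph{adjacent} pairs of oriented edges rather than from any pairing of opposite orientations. This is both shorter and avoids the obstruction that defeats your approach.
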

\begin{proof}
Fix two simple graphs $G=(V,E)$ and $G'=(V',E')$, and let $\mathcal{G}=(\mathcal{V},\mathcal{E})$ and $\mathcal{G}'=(\mathcal{V}',\mathcal{E}')$ their non-backtracking graphs.\newline If $G$ and $G'$ are isomorphic, then there exists a bijection $f:V\rightarrow V'$ such that 
\begin{equation*}
    v\sim w \text{ in }G \iff  f(v)\sim f(w) \text{ in }G'.
\end{equation*}Let $\varphi:\mathcal{V}\rightarrow \mathcal{V}'$ defined by
\begin{equation*}
    \varphi([v,w]):=[f(v),f(w)].
\end{equation*}Then, $\varphi$ is clearly bijective, and
\begin{align*}
    \omega_1\rightarrow \omega_2 \text{ in }\mathcal{G} \iff& \out(\omega_1)=\inp(\omega_2) \text{ and }\inp(\omega_1)\neq \out(\omega_2)\\ 
    \iff& \inp(\omega_1) \sim \out(\omega_1)=\inp(\omega_2) \sim \out(\omega_2) \text{ in }G \\ &\text{ and }\inp(\omega_1)\neq \out(\omega_2)\\
    \iff& f(\inp(\omega_1)) \sim f(\out(\omega_1))=f(\inp(\omega_2)) \sim f(\out(\omega_2)) \text{ in }G'\\ &\text{ and }f(\inp(\omega_1))\neq f(\out(\omega_2))\\
    \iff& \varphi(\omega_1)\rightarrow \varphi(\omega_2) \text{ in }\mathcal{G}',
\end{align*}implying that $\varphi$ is an isomorphism between $\mathcal{G}$ and $\mathcal{G}'$.\newline
Vice versa, if $\mathcal{G}$ and $\mathcal{G}'$ are isomorphic, then there exists a bijection $\gamma:\mathcal{V}\rightarrow \mathcal{V}'$ such that 
\begin{equation*}
    \omega_1 \rightarrow \omega_2 \text{ in }\mathcal{G} \iff   \gamma(\omega_1) \rightarrow \gamma(\omega_2) \text{ in }\mathcal{G}'.
\end{equation*}Given $v\in V$ of degree $\geq 2$, let $w_1,w_2$ be two distinct neighbors of $v$ in $G$. Then, $[w_1,v]\rightarrow [v,w_2]$ in $\mathcal{G}$, implying that $\gamma([w_1,v])\rightarrow \gamma([v,w_2])$ in $\mathcal{G}'$. Hence,
\begin{equation}\label{eq:gv}
    \out(\gamma([w_1,v]))=\inp(\gamma([v,w_2])).
\end{equation}Since this holds for all distinct neighbors $w_1,w_2$ of $v$, we can define $g(v)\in V'$ to be \eqref{eq:gv}.\newline

Given $v\in V$ of degree $1$ and given $w\sim v$, we define
\begin{equation*}
g(v) := \out(\gamma([w,v])).
\end{equation*}

We claim that $g:V\rightarrow V'$ is an isomorphism between $G$ and $G'$.\newline 
 
Consider $v_1,v_2\in V$ such that $v_1\sim v_2 $ in $G$ and, without loss of generality, assume $\deg(v_1)>1$, as the case in which $\deg(v_1)=\deg(v_2)=1$ is trivial. Observe that
\begin{align*}
  v_1\sim v_2 \text{ in }G &\iff [v_1,v_2]\in\mathcal{G}\\
  &\iff  \gamma([v_1,v_2])\in\mathcal{G}'\\
  &\iff \inp(\gamma([v_1,v_2])) \sim \out(\gamma([v_1,v_2])) \text{ in }G'\\
  &\iff g(v_1)\sim g(v_2) \text{ in } G'.  
\end{align*}Now, if $w_1\in V'$ and $w_1\sim w_2$ in $G'$, then by the surjectivity of $\gamma$ we have that there exist $v_1,v_2\in V$ such that $\gamma([v_1,v_2])=[w_1,w_2]$. Hence, $g(v_2)=w_2$, implying that $g$ is surjective. Moreover, by the pigeonhole principle, since we know that $|V|=|V'|$ as a consequence of Lemma \ref{lemma:VertexCount}, it follows that $g$ is also injective.
\end{proof}

%We now offer an alternative proof (and statement) to Theorem 4 in \cite{cooper2009}, which states that $G$ can be uniquely reconstructed from its non-backtracking matrix.

%The next theorem states that a simple graph $G$ with minimum degree $\delta\geq 1$ can be uniquely reconstructed from its non-backtracking graph. It is analogous to Theorem 4 in \cite{cooper2009}, which states that $G$ can be uniquely reconstructed from its non-backtracking matrix. Our proof is nevertheless interesting because it provides an algorithm that shows whether a directed graph is the non-backtracking graph of a simple graph.

We now ask: What is the fraction of directed graphs that are non-backtracking graphs? We give an upper bound for this quantity in the next theorem, in which isomorphic graphs are not counted only once. 

\begin{theorem}
 Given $\mathcal{N}\in\mathbb{N}$, let $F(\mathcal{N})$ be the fraction of directed graphs on $\mathcal{N}$ nodes that are non-backtracking graphs of (labelled) simple graphs with minimum degree $\geq 1$. Then, $F(\mathcal{N})=0$, if $\mathcal{N}$ is odd, and
\begin{equation*}
F(\mathcal{N})=\frac{\sum_{N\leq 2M}\sum_{k=0}^{N}(-1)^k \binom{N}{k}\binom{\binom{N-k}{2}}{M}}{2^{(2M)(2M-1)}},
\end{equation*}if $\mathcal{N}=2M$ is even. In particular, $F(\mathcal{N})\rightarrow 0$ as $\mathcal{N}\rightarrow\infty$.
\end{theorem}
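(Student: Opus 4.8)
The plan is to evaluate the numerator and the denominator of $F(\mathcal{N})$ separately and then bound the ratio. The denominator and the odd case are immediate: if $\mathcal{N}$ is odd, then by Lemma \ref{lemma:VertexCount} the non-backtracking graph of a simple graph always has twice as many nodes as the original graph has edges, so no directed graph on $\mathcal{N}$ nodes is a non-backtracking graph and $F(\mathcal{N})=0$. If $\mathcal{N}=2M$, a directed graph on $2M$ labelled nodes is loopless and has at most one arc in each direction between any pair of distinct nodes (note that $\mathcal{NB}(G)$ is of this type, since $B$ has $(0,1)$-entries and $B_{ii}=0$ as $G$ is simple), hence it is specified by an arbitrary subset of the $2M(2M-1)$ ordered pairs of distinct nodes, giving exactly $2^{2M(2M-1)}$ of them.

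The substance is in the numerator. Since $\mathcal{NB}(G)$ has exactly $2|E(G)|$ nodes, a directed graph on $2M$ nodes can only arise as $\mathcal{NB}(G)$ for a graph $G$ with $M$ edges. The key observation I would make is that $G\mapsto\mathcal{NB}(G)$ is injective on labelled simple graphs of minimum degree $\ge 1$: the node set of $\mathcal{NB}(G)$ is precisely the set of oriented edges of $G$, so $E(G)$ is recovered by forgetting orientations, and since $G$ has no isolated vertex its vertex set equals the support of $E(G)$, so $G$ is reconstructed (this also follows from the isomorphism theorem proved above, together with Lemma \ref{lemma:VertexCount}). Hence the numerator equals the number of labelled simple graphs with exactly $M$ edges and no isolated vertex. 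By the handshake lemma such a graph on $N$ vertices satisfies $N\le\sum_v\deg_G v=2M$, so this number is $\sum_{N\le 2M} g(N,M)$, where $g(N,M)$ counts the simple graphs on the vertex set $\{1,\dots,N\}$ with $M$ edges and no isolated vertex.

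To finish the evaluation I would apply inclusion--exclusion over the set of isolated vertices: for a fixed $k$-subset $S$ of $\{1,\dots,N\}$, the number of $M$-edge graphs on $\{1,\dots,N\}$ in which every vertex of $S$ is isolated is the number of ways to place $M$ edges among the remaining $N-k$ vertices, i.e.\ $\binom{\binom{N-k}{2}}{M}$, so $g(N,M)=\sum_{k=0}^N(-1)^k\binom Nk\binom{\binom{N-k}{2}}{M}$ (terms with $N-k$ too small vanish on their own). Substituting gives the displayed formula. For the limit I would estimate crudely: $g(N,M)\le\binom{\binom N2}{M}$, and for $N\le 2M$ one has $\binom N2<2M^2$, so $g(N,M)\le\binom{2M^2}{M}\le(2M^2)^M/M!\le(2eM)^M$; the numerator is therefore at most $2M\,(2eM)^M$, whose logarithm is $M\log M+O(M)$, which grows strictly more slowly than the exponent $2M(2M-1)$ of the denominator, so $F(2M)\to 0$, and since $F$ vanishes on odd arguments, $F(\mathcal{N})\to 0$ as $\mathcal{N}\to\infty$.

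I expect the main obstacle to be not any single hard computation but the bookkeeping: reconciling the informal phrase ``fraction of directed graphs that are non-backtracking graphs'' with the precise count of labelled objects on each side (in particular, recording each non-backtracking graph with its natural vertex labelling by oriented edges, which is exactly what makes $G\mapsto\mathcal{NB}(G)$ injective), and verifying that the handshake lemma forces the finite summation range $N\le 2M$ so that the displayed finite sum is genuinely the full count. The injectivity step and the inclusion--exclusion identity are the only parts carrying real content, and both are short.
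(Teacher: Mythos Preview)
Your proof is correct and follows the same structure as the paper's: count all directed graphs for the denominator, count simple graphs with no isolated vertex via inclusion--exclusion for the numerator, and bound the ratio. The only differences are that the paper invokes Theorem~4 of \cite{cooper2009} for the bijection between non-backtracking graphs and simple graphs of minimum degree $\ge 1$ (where you instead argue injectivity directly from the oriented-edge labelling), and for the limit the paper telescopes $\sum_{N\le 2M}\bigl[\binom{\binom{N}{2}}{M}-\binom{\binom{N-1}{2}}{M}\bigr]=\binom{\binom{2M}{2}}{M}$ to get a single binomial coefficient rather than your Stirling-type estimate.
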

\begin{proof}
Given $\mathcal{N}\in\mathbb{N}$, there are $\mathcal{N}(\mathcal{N}-1)$ possible directed edges among $\mathcal{N}$ nodes. Hence, the number of directed graphs on $\mathcal{N}$ nodes is $2^{\mathcal{N}(\mathcal{N}-1)}$.\newline 

Similarly, given $N\in\mathbb{N}$, there are $\binom{N}{2}$ possible undirected edges among $N$ nodes. Therefore, given $M\in\mathbb{N}$, the number of (labelled) simple graphs on $N$ nodes and $M$ edges is $\binom{\binom{N}{2}}{M}$.\newline 

Now, for $j=1,\ldots,N$, let $$A_j\coloneqq\{\text{simple graphs on $N$ nodes and $M$ edges where $v_j$ is an isolated vertex}\}.$$
Then, $$\left|A_{j_1}\cap\ldots\cap A_{j_k}\right|=\binom{\binom{N-k}{2}}{M},$$ for any $1\le j_1<\ldots<j_k\le N$. And by the principle of inclusion-exclusion,
\begin{align*}
\left|\cup_{j=1}^NA_j\right|&=\sum_{k=1}^{N}(-1)^{k-1}\sum_{1\le j_1<\ldots<j_k\le N}|A_{j_1}\cap\ldots\cap A_{j_k}|    
\\&= \sum_{k=1}^{N}(-1)^{k-1} \sum_{1\le j_1<\ldots<j_k\le N}\binom{\binom{N-k}{2}}{M}
\\&= \sum_{k=1}^{N}(-1)^{k-1} \binom{N}{k}\binom{\binom{N-k}{2}}{M}.
\end{align*} 
Hence, 
\begin{align*}
 &\#\{\text{simple graphs on $N$ nodes and $M$ edges that have minimum degree }\geq 1\}\\
 &=\#\{\text{simple graphs on $N$ nodes and $M$ edges}\}- |\cup_{j=1}^NA_j|
    \\
&=\binom{\binom{N}{2}}{M}-\sum_{k=1}^{N}(-1)^{k-1} \binom{N}{k}\binom{\binom{N-k}{2}}{M}
\\  
&=\sum_{k=0}^{N}(-1)^k \binom{N}{k}\binom{\binom{N-k}{2}}{M},    
\end{align*}
where we set $\binom{N}{0}=1$, and $\binom{\binom{N-k}{2}}{M}=0$ whenever $\binom{N-k}{2}<M$. Moreover, we can give an upper bound of the above quantity by observing that, for a given $j\in \{1,\ldots,N\}$,
\begin{align*}
     &\#\{\text{simple graphs on $N$ nodes and $M$ edges that have minimum degree }\geq 1\}\\
     &=\sum_{k=0}^{N}(-1)^k \binom{N}{k}\binom{\binom{N-k}{2}}{M}\\   &= \#\{\text{simple graphs on $N$ nodes and $M$ edges}\}- |\cup_{j=1}^NA_j|
    \\    &\leq \#\{\text{simple graphs on $N$ nodes and $M$ edges}\}- |A_1|
    \\
    &=\binom{\binom{N}{2}}{M}-\binom{\binom{N-1}{2}}{M}.
\end{align*}

Now, by Theorem 4 in \cite{cooper2009}, the number of directed graphs on $\mathcal{N}$ nodes that are non-backtracking graphs of simple graphs with minimum degree $\geq 1$ equals the number of simple graphs with $\mathcal{N}/2$ edges and with minimum degree $\geq 1$. This is equal to $0$, if $\mathcal{N}$ is odd, and it is equal to
\begin{equation*}\sum_{N\leq 2M}\sum_{k=0}^{N}(-1)^k \binom{N}{k}\binom{\binom{N-k}{2}}{M},
\end{equation*}if $\mathcal{N}=2M$ is even. Hence, $F(\mathcal{N})=0$, if $\mathcal{N}$ is odd, and
\begin{align*}
F(\mathcal{N})&=\frac{\sum_{N\leq 2M}\sum_{k=0}^{N}(-1)^k \binom{N}{k}\binom{\binom{N-k}{2}}{M}}{2^{(2M)(2M-1)}}\\
&\leq \frac{\sum_{N\leq 2M}\binom{\binom{N}{2}}{M}-\binom{\binom{N-1}{2}}{M}}{2^{(2M)(2M-1)}}\\
&=\frac{\binom{\binom{2M}{2}}{M}}{2^{(2M)(2M-1)}},
\end{align*}if $\mathcal{N}=2M$ is even. Hence, in particular, $F(\mathcal{N})\rightarrow 0$ as $\mathcal{N}\rightarrow\infty$.
\end{proof}

Now, in \cite{NB-Laplacian}, it is shown that a graph is bipartite if and only if its non-backtracking graph is bipartite. We expand this result by proving the following.

\begin{proposition}\label{prop:bipartite}
Let $G=(V,E)$ be a simple graph with minimum degree $\delta\geq 2$, and let $\mathcal{G}=(\mathcal{V},\mathcal{E})$ be its non-backtracking graph. If $\mathcal{G}$ is bipartite, then $[v,w]$ and $[w,v]$ belong to different sets of the bipartition, for each $[v,w]\in \mathcal{V}$.
\end{proposition}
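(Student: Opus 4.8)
The plan is to argue by contradiction using the strong connectedness of $\mathcal{G}$ (which holds here since $\delta \geq 2$, unless $G$ is a cycle, and for the cycle the statement is easily checked directly — or one can note the cycle's non-backtracking graph is a disjoint union of two directed cycles, so it is bipartite only when the cycle has even length, and there the claim holds too). Let $\mathcal{V} = \mathcal{V}_1 \sqcup \mathcal{V}_2$ be a bipartition of $\mathcal{G}$, so every directed edge of $\mathcal{G}$ goes between $\mathcal{V}_1$ and $\mathcal{V}_2$. Suppose for contradiction that there is some oriented edge $[v,w]$ with both $[v,w]$ and $[w,v]$ lying in the same part, say both in $\mathcal{V}_1$.

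First I would observe the key local fact: if $[a,b] \in \mathcal{V}_i$ and $[c,a] \in \mathcal{V}_i$ for two oriented edges landing at / leaving from a common vertex configuration, then... more precisely, the useful move is this. Since $\delta \geq 2$, the vertex $w$ has a neighbor $z \neq v$; then $[v,w] \to [w,z]$ is an edge of $\mathcal{G}$, so $[w,z] \in \mathcal{V}_2$. Also $[z,w] \to [w,v]$ is an edge of $\mathcal{G}$ (since $z \neq v$), and $[w,v] \in \mathcal{V}_1$, forcing $[z,w] \in \mathcal{V}_2$. Symmetrically, starting from $[w,v] \in \mathcal{V}_1$ and a neighbor $u \neq w$ of $v$, we get $[v,u] \in \mathcal{V}_2$ and $[u,v] \in \mathcal{V}_2$. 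So the "bad pair" property propagates: $\{[v,w],[w,v]\}$ both in $\mathcal{V}_1$ forces $\{[x,y],[y,x]\}$ both in $\mathcal{V}_2$ for every edge $(x,y)$ adjacent (sharing an endpoint) to $(v,w)$ in $G$, and then iterating, both in $\mathcal{V}_1$ again for edges at distance two, etc. Since $G$ is connected and every non-backtracking walk in $\mathcal{G}$ has a counterpart, this pins down the part of $\{[x,y],[y,x]\}$ for every edge $(x,y)$ of $G$ according to the parity of the edge-distance from $(v,w)$.

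Now I would derive the contradiction by walking around a closed non-backtracking walk. Since $\delta \geq 2$ and $G$ is not (handled separately) merely a cycle, $G$ contains a cycle, say of length $\ell$ through edges $(x_1,x_2),(x_2,x_3),\dots,(x_\ell,x_1)$. Consider the non-backtracking walk $[x_1,x_2] \to [x_2,x_3] \to \cdots \to [x_\ell,x_1] \to [x_1,x_2]$: consecutive oriented edges alternate parts, so $\ell$ must be even, and then also the "reverse" closed walk $[x_2,x_1]\to[x_1,x_\ell]\to\cdots\to[x_2,x_1]$ is consistent. But combining this with the propagation rule from the previous paragraph — which says the part of $\{[x_i,x_{i+1}],[x_{i+1},x_i]\}$ flips each time we step to an adjacent edge — gives that going once around the $\ell$-cycle flips the pair-part $\ell$ times, hence returns to the same pair-part (consistent, since $\ell$ even), yet the ordinary bipartition walk shows $[x_1,x_2]$ and $[x_2,x_3]$ are in opposite parts while the pair-rule treats the whole pair uniformly. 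The clean way to extract the contradiction: the assumption "$[v,w],[w,v]$ in the same part" combined with strong connectedness of $\mathcal{G}$ lets me reach, via a non-backtracking walk, an oriented edge $[a,b]$ together with a forced assignment that puts $[a,b]$ and $[b,a]$ in the same part for *every* edge, and in particular puts both $[x_1,x_2]$ and $[x_2,x_1]$ in one fixed part while a direct path $[x_1,x_2]\to[x_2,x_1]$... — note there is no such direct edge, so instead I use: $[x_1,x_2] \to [x_2,x_3]$ and $[x_3,x_2] \to [x_2,x_1]$ are both edges, so if $[x_1,x_2]$ and $[x_2,x_1]$ share a part then $[x_2,x_3]$ and $[x_3,x_2]$ do too — exactly the propagation — and chasing this around the even cycle is consistent, so I instead must use the *second* cycle (which exists since $G$ is not a cycle): two cycles of possibly different parities sharing the edge-distance bookkeeping from $(v,w)$ cannot both be consistent with a single global parity unless the pair-assignment is trivial, contradiction.

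**Main obstacle.** The delicate point is packaging the propagation argument cleanly: I must verify that "both orientations of an edge lie in the same part of the bipartition" is an invariant that either holds for all edges simultaneously (with a consistent parity depending on edge-distance) or for none, and that this global consistency clashes with the existence of two independent cycles in $G$ (equivalently, with $\mathcal{G}$ being strongly connected and $G$ not a single cycle). Concretely, the cleanest route is: define $\epsilon(x,y) \in \{0,1\}$ to be $0$ if $[x,y],[y,x]$ are in the same part and $1$ otherwise; show $\epsilon$ is constant on edges adjacent in the line graph of $G$ (this is the local computation above, done carefully for both a common-endpoint "turn" and the reversal), hence $\epsilon$ is globally constant since $G$ is connected; then show $\epsilon \equiv 0$ is impossible by exhibiting any single edge whose two orientations must be separated — e.g. take an edge $(v,w)$ on a shortest cycle and a chord-free path realizing... — or more simply, sum a parity identity around any cycle to force $\epsilon \equiv 1$. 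So the "hard part" is really just the bookkeeping lemma that $\epsilon$ is a well-defined constant; once that is in hand, $\epsilon \equiv 1$ is the assertion, and $\epsilon \equiv 0$ (our contradiction hypothesis) is ruled out by a one-line parity count along a closed non-backtracking walk.
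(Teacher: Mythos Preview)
Your framework is sound and genuinely different from the paper's. Defining $\epsilon(x,y)\in\{0,1\}$ according to whether $[x,y]$ and $[y,x]$ lie in the same part, and showing that $\epsilon$ takes the same value on edges sharing an endpoint (hence is globally constant on a connected $G$), is a clean structural reduction. The paper takes a more direct route: it exhibits an explicit odd-length directed path in $\mathcal{G}$ from $[v,w]$ to $[w,v]$, obtained by walking from $[v,w]$ out to a cycle, going once around it (the cycle has even length $s$ since $\mathcal{G}$ bipartite forces $G$ bipartite), and walking back, for a total length $2(r-1)+s+1$, which is odd.

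However, your final step---ruling out $\epsilon\equiv 0$---does not work as you describe it. A ``parity count along a closed non-backtracking walk'' or an appeal to ``two cycles of possibly different parities'' fails here: since $G$ is bipartite, \emph{every} cycle in $G$ has even length, and $\epsilon\equiv 0$ is perfectly consistent around any such cycle (the induced pair-part $\pi$ alternates and returns to itself after an even number of steps). The actual contradiction is local, at a vertex $u$ of degree $\geq 3$ (which exists since $G$ is connected, has $\delta\geq 2$, and is not a cycle): from $[a,u]\to[u,b]$ and $[a,u]\to[u,c]$ we get that $[u,b]$ and $[u,c]$ lie in the same part, while from $[c,u]\to[u,b]$ together with $\epsilon(u,c)=0$ we get that they lie in opposite parts. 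Equivalently, $\epsilon\equiv 0$ would yield a proper $2$-colouring of the line graph of $G$, which is impossible since that line graph contains a triangle. With this finish your argument is complete; as written, the contradiction step is a genuine gap.
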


\begin{proof}
We can assume that $G$ is connected, since otherwise we can apply the statement to all connected components of $G$. Moreover, if $G$ is a cycle graph of even length, then $\mathcal{G}$ has multiple components and the claim is trivially satisfied. Therefore, we can also assume that $G$ is not a cycle graph. Since $\delta\geq 2$, there exists a non-backtracking path in $G$ of the form (Figure \ref{fig:bipartite}):
\begin{equation*}
  (v,w),(w=q_1,q_2),\ldots, (q_{r-1},q_r=c_1),\ldots, (c_{s},c_1),
\end{equation*}
for some $r\geq 1$ and for some even $s\geq 4$.

\begin{figure}[h]
    \centering
    \includegraphics[width=7cm]{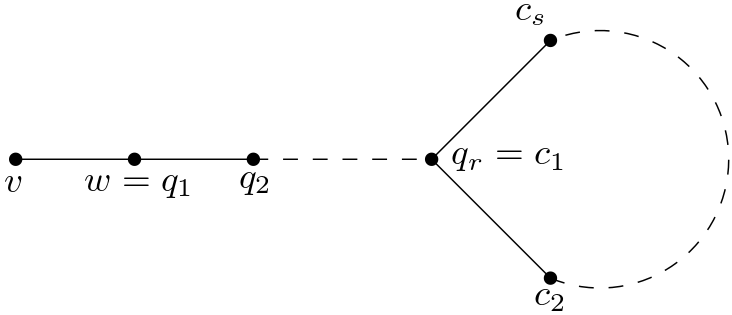}
    \caption{An illustration of the proof of Proposition \ref{prop:bipartite}.}
    \label{fig:bipartite}
\end{figure}

This gives a directed path, in $\mathcal{G}$, of the form
\begin{align*}
    &[v,w]\rightarrow [w,q_2]\rightarrow\ldots\rightarrow [q_{r-1},q_r=c_1]\rightarrow  \ldots\rightarrow  [c_s,c_1]\\
    &\rightarrow[c_1=q_r,q_{r-1}]\rightarrow \ldots \rightarrow[q_2,w]\rightarrow [w,v].
\end{align*}This is a path of length $2(r-1)+s+1$, therefore of odd length, from $[v,w]$ to $[w,v]$. Hence, $[v,w]$ and $[w,v]$ belong to different sets of the bipartition.
\end{proof}

\section{Non-backtracking Laplacian eigenfunctions}\label{section:efunctions}
In this section we investigate some classes of eigenfunctions of the non-backtracking Laplacian. We start by showing a property that all eigenfunctions corresponding to non-zero eigenvalues have to satisfy. We also recall that, as shown in \cite{NB-Laplacian}, $0$ is always an eigenvalue, and the corresponding eigenfunctions are constant on each connected component of $\mathcal{G}$.  

\begin{proposition}\label{prop:0}
Let $G=(V,E)$ be a simple graph with minimum degree $\geq 2$, and let $\mathcal{G}=(\mathcal{V},\mathcal{E})$ be its non-backtracking graph. If $(\lambda, f)$ is an eigenpair for the non-backtracking Laplacian and $\lambda\neq 0$, then
\begin{equation*}
    \sum_{[v,w]\in\mathcal{V}} f([v,w])=0.
\end{equation*}
\end{proposition}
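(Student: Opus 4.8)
The plan is to use the fact that the vector $\mathbf{1}$ of all ones is a left eigenvector of the non-backtracking Laplacian $\mathcal{L}$ with eigenvalue $0$, and then exploit orthogonality of left and right eigenvectors associated to distinct eigenvalues. Concretely, recall $\mathcal{L} = \id - \mathcal{D}^{-1}\mathcal{A}$, where $\mathcal{A} = B$ is the adjacency matrix of $\mathcal{G}$ and $\mathcal{D}$ is the (invertible) degree matrix. First I would observe that $\mathbf{1}^\top \mathcal{D}^{-1}\mathcal{A}$ has, in coordinate $e_j$, the value $\sum_{i} B_{ij}/\deg_{\mathcal{G}}(e_i)$; this is not obviously $1$, so a direct left-eigenvector computation on $\mathcal{D}^{-1}\mathcal{A}$ is awkward. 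The cleaner route is to pass to $\mathcal{D}\mathcal{L} = \mathcal{D} - \mathcal{A}$ (the combinatorial-type Laplacian of the digraph $\mathcal{G}$), whose rows sum to $\deg_{\mathcal{G}}(e_i) - \deg_{\mathcal{G}}(e_i) = 0$, i.e. $(\mathcal{D}-\mathcal{A})\mathbf{1} = 0$. Equivalently, $\mathbf{1}^\top(\mathcal{D}-\mathcal{A})^\top = 0$, that is $\mathbf{1}^\top(\mathcal{D} - \mathcal{A}^\top) = 0$. Since $\mathcal{A}^\top = B^\top$ is the non-backtracking matrix whose columns correspond to the same combinatorial structure, $\mathbf{1}^\top \mathcal{A}^\top$ in coordinate $e_i$ equals $\sum_j B_{ji} = \deg_{\mathcal{G}}(e_i)$ as well (the number of edges of $\mathcal{G}$ pointing out of $e_i$ equals the number pointing into $e_i$ in this digraph — indeed $\deg^-_{\mathcal{G}}(e) = \deg^+_{\mathcal{G}}(e)$ because $[v,w]$ receives arrows from all $[u,v]$ with $u \neq w$, which number $\deg_G(v) - 1$, matching $\deg_{\mathcal{G}}([v,w])$). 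So $\mathbf{1}^\top(\mathcal{D} - \mathcal{A}^\top) = 0$.

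From $\mathbf{1}^\top(\mathcal{D} - \mathcal{A}^\top) = 0$ I get $\mathbf{1}^\top \mathcal{D} = \mathbf{1}^\top \mathcal{A}^\top$, hence $\mathbf{1}^\top = \mathbf{1}^\top \mathcal{A}^\top \mathcal{D}^{-1} = \mathbf{1}^\top (\mathcal{D}^{-1}\mathcal{A})^\top$, which says $\mathcal{D}^{-1}\mathcal{A}$ has $\mathbf{1}$ as a left eigenvector (as a row vector acting on the right) with eigenvalue $1$; equivalently $\mathbf{1}^\top \mathcal{L} = \mathbf{1}^\top(\id - \mathcal{D}^{-1}\mathcal{A}) = 0$. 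Now if $(\lambda, f)$ is a (right) eigenpair with $\mathcal{L} f = \lambda f$ and $\lambda \neq 0$, then
\begin{equation*}
    0 = (\mathbf{1}^\top \mathcal{L}) f = \mathbf{1}^\top(\mathcal{L} f) = \lambda\, \mathbf{1}^\top f = \lambda \sum_{[v,w]\in\mathcal{V}} f([v,w]),
\end{equation*}
and dividing by $\lambda \neq 0$ gives $\sum_{[v,w]} f([v,w]) = 0$, as claimed.

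The only genuine point requiring care — the step I expect to be the main obstacle — is verifying that $\mathbf{1}$ is a left eigenvector, i.e. the identity $\sum_{j} B_{ji}/\deg_{\mathcal{G}}(e_i)$-type bookkeeping, or equivalently the balanced-degree fact $\deg^+_{\mathcal{G}}(e) = \deg^-_{\mathcal{G}}(e)$ for every oriented edge $e$. This should be checked directly from the definition of $B$: an oriented edge $e = [v,w]$ has out-neighbors exactly the edges $[w,z]$ with $z \sim w$, $z \neq v$ (there are $\deg_G(w) - 1$ of them), and in-neighbors exactly the edges $[u,v]$ with $u \sim v$, $u \neq w$ (there are $\deg_G(v) - 1$ of them); wait — these counts differ in general, so $B$ itself need not be balanced vertex-by-vertex. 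The correct statement is that $\deg^+_{\mathcal{G}}([v,w]) = \deg_G(w) - 1$ while the relevant row sum of $\mathcal{A}^\top = B^\top$ at $[v,w]$ is $\deg^-_{\mathcal{G}}([v,w]) = \deg_G(v) - 1 = \deg_{\mathcal{G}}([v,w])$, which is precisely $\mathcal{D}_{[v,w],[v,w]}$. Thus $\mathbf{1}^\top \mathcal{A}^\top$ has entry $\deg_G(v) - 1$ in position $[v,w]$, equal to $\mathbf{1}^\top\mathcal{D}$ there, so $\mathbf{1}^\top(\mathcal{D} - \mathcal{A}^\top) = 0$ indeed holds — it is $B^\top$, the non-backtracking matrix, not $B$, that has the balanced column sums, which is exactly why the argument goes through. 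I would present this verification first and then run the three-line orthogonality computation above.
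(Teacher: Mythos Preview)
Your overall strategy---show that $\mathbf{1}^\top\mathcal{L}=0$ and then pair this against the eigenvector equation $\mathcal{L}f=\lambda f$---is sound and is exactly what the paper does, just phrased differently: the paper sums the eigenvalue relation over all oriented edges and observes that both sides collapse to the same total, which is the statement $\mathbf{1}^\top\mathcal{D}^{-1}\mathcal{A}f=\mathbf{1}^\top f$ in disguise.

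However, your verification of the key fact $\mathbf{1}^\top\mathcal{L}=0$ has a genuine error. From $\mathbf{1}^\top(\mathcal{D}-\mathcal{A}^\top)=0$ you deduce $\mathbf{1}^\top=\mathbf{1}^\top(\mathcal{D}^{-1}\mathcal{A})^\top$, and then claim this says $\mathbf{1}$ is a \emph{left} eigenvector of $\mathcal{D}^{-1}\mathcal{A}$. It does not: transposing that identity gives $(\mathcal{D}^{-1}\mathcal{A})\mathbf{1}=\mathbf{1}$, which is the \emph{right} eigenvector property you already knew. Your route through $\mathcal{D}-\mathcal{A}^\top$ is just the row-sum identity $(\mathcal{D}-\mathcal{A})\mathbf{1}=0$ in transpose, and recovers nothing new. (Relatedly, your bookkeeping slips: $\deg_{\mathcal{G}}([v,w])=\deg_G(w)-1$, not $\deg_G(v)-1$, and $(\mathbf{1}^\top\mathcal{A}^\top)_{[v,w]}=\deg_G(w)-1$, not $\deg_G(v)-1$.)

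The direct computation you dismissed as ``not obviously $1$'' is in fact the cleanest route and is what underlies the paper's argument. For $e_j=[v,w]$, the in-neighbours in $\mathcal{G}$ are exactly the edges $[u,v]$ with $u\sim v$, $u\neq w$; there are $\deg_G(v)-1$ of them, and each has out-degree $\deg_{\mathcal{G}}([u,v])=\deg_G(v)-1$. Hence
\[
(\mathbf{1}^\top\mathcal{D}^{-1}\mathcal{A})_{[v,w]}=\sum_{i:\,e_i\to[v,w]}\frac{1}{\deg_{\mathcal{G}}(e_i)}=(\deg_G(v)-1)\cdot\frac{1}{\deg_G(v)-1}=1,
\]
so $\mathbf{1}^\top\mathcal{L}=0$, and your three-line orthogonality computation then finishes the proof.
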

\begin{proof}By \cite{NB-Laplacian}, $\lambda\neq 1$ and, for each $[v,w]\in\mathcal{G}$,
\begin{equation*}
    f([v,w])=\frac{1}{(1-\lambda)}\cdot \frac{1}{(\deg w-1)}\cdot \left( \sum_{[v,w]\rightarrow [w,z]}f([w,z])\right).
\end{equation*}Therefore, for each $w\in V$,
\begin{align*}
   \sum_{v\sim w} f([v,w])&=\frac{1}{(1-\lambda)}\cdot \frac{1}{(\deg w-1)}\cdot \left( \sum_{v\sim w}\,\sum_{z\sim w,\,z\neq v}f([w,z])\right)\\
   &=\frac{1}{(1-\lambda)}\cdot\left( \sum_{z\sim w}f([w,z])\right),
\end{align*}implying that
\begin{equation*}
    \sum_{w\in V}\sum_{v\sim w} f([v,w])=\frac{1}{(1-\lambda)}\cdot\left(\sum_{w\in V}\sum_{z\sim w}f([w,z])\right),
\end{equation*}i.e.,
\begin{equation*}
    \sum_{[v,w]\in\mathcal{V}} f([v,w])=\frac{1}{(1-\lambda)}\cdot\left( \sum_{[v,w]\in\mathcal{V}} f([v,w])\right).
\end{equation*}Since $\lambda\neq 0$, this implies that
\begin{equation*}
    \sum_{[v,w]\in\mathcal{V}} f([v,w])=0.
\end{equation*}
\end{proof}

\begin{remark}
As a consequence of Proposition \ref{prop:0}, if $f$ is a real eigenfunction for $\mathcal{L}$ corresponding to a non-zero eigenvalue, then $f$ must attain both positive and negative values.
\end{remark}

We now introduce and study symmetric and antisymmetric functions on $\mathcal{V}$.

\begin{definition}
    A function $f:\mathcal{V}\to\mathbb{C}$ is \emph{symmetric} (respectively, \emph{antisymmetric}) if 
    \begin{equation*}
        f([v,w])=f([w,v])
    \end{equation*} (respectively, $f([v,w])=-f([w,v])$), for all $[v,w]\in\mathcal{V}$.
\end{definition}

\begin{proposition}\label{prop:-f}
Let $G=(V,E)$ be a simple graph with minimum degree $\geq 2$, and let $\mathcal{G}=(\mathcal{V},\mathcal{E})$ be its non-backtracking graph. If $(\lambda, f)$ is an eigenpair for the non-backtracking Laplacian and $f$ is antisymmetric, then
\begin{equation}\label{eq:flow}
   \frac{1}{(\deg w-1)}\cdot \left( \sum_{[v,w]\rightarrow [w,z]}f([w,z])\right)= \frac{1}{(\deg v-1)}\cdot \left( \sum_{[y,v]\rightarrow [v,w]}f([y,v])\right),
\end{equation}for each $[v,w]\in\mathcal{V}$, and $\lambda$ is real.
\end{proposition}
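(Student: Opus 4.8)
The plan is to read off both conclusions directly from the eigenvalue equation of $\mathcal{L}$, applied at the two oriented edges $[v,w]$ and $[w,v]$, combined with antisymmetry of $f$. Recall, as used in the proof of Proposition~\ref{prop:0}, that $\mathcal{L}f=\lambda f$ is equivalent to
\begin{equation*}
  (1-\lambda)\,f([v,w]) \;=\; \frac{1}{\deg w-1}\sum_{[v,w]\to[w,z]} f([w,z])
\end{equation*}
for every $[v,w]\in\mathcal{V}$, since $\mathcal{L}=\id-\mathcal{D}^{-1}\mathcal{A}$ and $\deg_{\mathcal{G}}([v,w])=\deg w-1$. In particular the left-hand side of \eqref{eq:flow} equals $(1-\lambda)f([v,w])$. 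To identify the right-hand side, I would write the same identity at the reversed edge $[w,v]$, whose out-neighbours in $\mathcal{G}$ are exactly the $[v,z]$ with $z\sim v$, $z\ne w$; substituting $f([v,z])=-f([z,v])$ converts $\sum_{[w,v]\to[v,z]}f([v,z])$ into $-\sum_{[y,v]\to[v,w]}f([y,v])$, so the right-hand side of \eqref{eq:flow} equals $-(1-\lambda)f([w,v])$, which is again $(1-\lambda)f([v,w])$ by one last use of antisymmetry. This establishes \eqref{eq:flow}. Note that no hypothesis on $\lambda$ enters here, so the degenerate values $\lambda\in\{0,1\}$ need no separate treatment. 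Conceptually: writing $g([v,w])$ for the left-hand side of \eqref{eq:flow}, one has $g=\mathcal{D}^{-1}\mathcal{A}f=(1-\lambda)f$, hence $g$ is antisymmetric together with $f$, and \eqref{eq:flow} is precisely the identity $g([v,w])=-g([w,v])$.

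For the reality of $\lambda$ I would invoke the $P$-self-adjointness of $\mathcal{L}$ recalled in the introduction: if $\lambda$ were non-real, then $\sum_{[v,w]\in\mathcal{V}}\overline{f([v,w])}\,f([w,v])=0$. But antisymmetry forces this sum to equal $-\sum_{[v,w]\in\mathcal{V}}|f([v,w])|^{2}=-\|f\|^{2}<0$, since an eigenfunction is non-zero — a contradiction. Equivalently, the one-line computation $(\overline{\lambda}-\lambda)(f,f)_P=0$ coming from $P$-self-adjointness, together with $(f,f)_P=\langle f,Pf\rangle=-\|f\|^{2}\ne 0$, gives $\lambda\in\mathbb{R}$.

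I do not expect a genuine obstacle. The only points requiring care are the bookkeeping of in- and out-neighbourhoods in $\mathcal{G}$ (the out-neighbours of $[v,w]$ being the $[w,z]$ with $z\sim w$, $z\ne v$, the in-neighbours the $[y,v]$ with $y\sim v$, $y\ne w$), and tracking the antisymmetry sign across the two applications of the eigenvalue equation; the reality statement is simply where one leans on the self-adjointness structure already available from \cite{NB-Laplacian}.
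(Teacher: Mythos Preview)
Your proof is correct and follows essentially the same route as the paper: both apply the eigenvalue identity at $[v,w]$ and at $[w,v]$, use antisymmetry twice to convert outgoing sums at $[w,v]$ into incoming sums at $[v,w]$, and then derive reality of $\lambda$ from the $P$-self-adjointness relation $\sum_{[v,w]}\overline{f([v,w])}f([w,v])=0$ together with $f([w,v])=-f([v,w])$. The only cosmetic difference is that the paper divides through by $1-\lambda$ (invoking $\lambda\ne 1$ from \cite{NB-Laplacian}) whereas you keep the factor on the other side, which as you note makes the case $\lambda=1$ moot.
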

\begin{proof}
By \cite{NB-Laplacian}, $\lambda\neq 1$ and, for each $[v,w]\in\mathcal{G}$,
\begin{equation*}
    f([v,w])=\frac{1}{(1-\lambda)}\cdot \frac{1}{(\deg w-1)}\cdot \left( \sum_{[v,w]\rightarrow [w,z]}f([w,z])\right).
\end{equation*}Hence, if $f([v,w])=-f([w,v])$ for each $[v,w]\in\mathcal{G}$, then
\begin{align*}
     f([v,w])&=\frac{1}{(1-\lambda)}\cdot \frac{1}{(\deg w-1)}\cdot \left( \sum_{[v,w]\rightarrow [w,z]}f([w,z])\right)\\
     &=-f([w,v])\\
     &=-\frac{1}{(1-\lambda)}\cdot \frac{1}{(\deg v-1)}\cdot \left( \sum_{[w,v]\rightarrow [v,y]}f([v,y])\right)\\
     &=\frac{1}{(1-\lambda)}\cdot \frac{1}{(\deg v-1)}\cdot \left( \sum_{[y,v]\rightarrow [v,w]}f([y,v])\right).
\end{align*}Therefore, for all $[v,w]\in\mathcal{V}$,
\begin{equation*}
   \frac{1}{(\deg w-1)}\cdot \left( \sum_{[v,w]\rightarrow [w,z]}f([w,z])\right)= \frac{1}{(\deg v-1)}\cdot \left( \sum_{[y,v]\rightarrow [v,w]}f([y,v])\right).
\end{equation*}This proves the first claim. Now, assume by contradiction that $\lambda\neq \overline{\lambda}$. Then, by Theorem 3.9 in \cite{NB-Laplacian} and by the condition that $f$ has to satisfy,

\begin{equation*}
    0= \sum_{[v, w]} f([v, w])\cdot \overline{f([w, v])}=-\sum_{[v, w]} f([v, w])\cdot \overline{f([v, w])}.
\end{equation*}Since $f([v, w])\cdot \overline{f([v, w])}\geq 0$, this implies that $f([v, w])=0$ for each $[v,w]\in \mathcal{V}$, which is a contradiction.
\end{proof}

\begin{remark}
The condition in Proposition \ref{prop:-f} can be interpreted as follows. If we see directed edges in terms of flows, then \eqref{eq:flow} says that, for each $[v,w]\in\mathcal{G}$, the average of what flows out equals the average of what flows in.
\end{remark}
\begin{remark}
If $G=(V,E)$ is $d$-regular, then the eigenfunctions of $\mathcal{L}$ coincide with the ones of $B$ \cite{NB-Laplacian}. In this case, it is known that there are exactly $|E|-|V|+1$ eigenfunctions satisfying Proposition \ref{prop:-f} \cite{torres-1}. They coincide with the eigenfunctions of $1$ for $B$, and they are eigenfunctions of $1-\frac{1}{d-1}$ for $\mathcal{L}$.  
\end{remark}

Analogously to Proposition \ref{prop:-f}, one can prove the following.

\begin{proposition}\label{prop:+f}
Let $G=(V,E)$ be a simple graph with minimum degree $\geq 2$, and let $\mathcal{G}=(\mathcal{V},\mathcal{E})$ be its non-backtracking graph. If $(\lambda, f)$ is an eigenpair for the non-backtracking Laplacian and $f$ is symmetric, then
\begin{equation*}
   \frac{1}{(\deg w-1)}\cdot \left( \sum_{[v,w]\rightarrow [w,z]}f([w,z])\right)= \frac{1}{(\deg v-1)}\cdot \left( \sum_{[y,v]\rightarrow [v,w]}f([y,v])\right),
\end{equation*}for each $[v,w]\in\mathcal{V}$, and $\lambda$ is real.
\end{proposition}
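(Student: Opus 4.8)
The plan is to mirror, essentially verbatim, the proof of Proposition~\ref{prop:-f}, the only difference being the absence of a sign. First I would recall from \cite{NB-Laplacian} that any eigenpair $(\lambda,f)$ of $\mathcal{L}$ has $\lambda\neq 1$ and satisfies, for every $[v,w]\in\mathcal{V}$,
\[
f([v,w])=\frac{1}{1-\lambda}\cdot\frac{1}{\deg w-1}\left(\sum_{[v,w]\rightarrow[w,z]}f([w,z])\right).
\]
Applying this same identity to the oriented edge $[w,v]$ gives
\[
f([w,v])=\frac{1}{1-\lambda}\cdot\frac{1}{\deg v-1}\left(\sum_{[w,v]\rightarrow[v,y]}f([v,y])\right).
\]
The edges $[v,y]$ with $[w,v]\rightarrow[v,y]$ are exactly those with $y\sim v$, $y\neq w$; by symmetry $f([v,y])=f([y,v])$, and the resulting set of edges $[y,v]$ is precisely $\{[y,v]:[y,v]\rightarrow[v,w]\}$. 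Hence
\[
f([w,v])=\frac{1}{1-\lambda}\cdot\frac{1}{\deg v-1}\left(\sum_{[y,v]\rightarrow[v,w]}f([y,v])\right).
\]
Since $f$ is symmetric, the left-hand sides of the two recursions agree, and multiplying through by $1-\lambda\neq 0$ yields the claimed flow identity.

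For the second assertion I would argue by contradiction, exactly as in Proposition~\ref{prop:-f}. If $\lambda\neq\overline{\lambda}$, then Theorem~3.9 in \cite{NB-Laplacian} forces
\[
0=\sum_{[v,w]\in\mathcal{V}}f([v,w])\cdot\overline{f([w,v])}=\sum_{[v,w]\in\mathcal{V}}f([v,w])\cdot\overline{f([v,w])}=\sum_{[v,w]\in\mathcal{V}}|f([v,w])|^2,
\]
where the middle equality uses symmetry. As every summand is nonnegative, this gives $f\equiv 0$, contradicting that $f$ is an eigenfunction; hence $\lambda$ is real.

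There is essentially no obstacle here, since the argument is a direct adaptation of Proposition~\ref{prop:-f}. The only point to keep straight is the bookkeeping in the reindexing step\,---\,that summing over the out-edges of $[w,v]$ and then reversing orientations via symmetry reproduces exactly the sum over the in-edges of $[v,w]$\,---\,together with the observation that, in contrast with the antisymmetric case where the analogous computation produced $-\sum_{[v,w]}|f([v,w])|^2$, the symmetric case lands on $+\sum_{[v,w]}|f([v,w])|^2$; either sign delivers the same contradiction.
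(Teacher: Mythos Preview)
Your proof is correct and follows precisely the approach the paper intends: the paper does not give a separate proof but simply states that the result is proved ``analogously to Proposition~\ref{prop:-f},'' which is exactly what you do, including the reindexing step via symmetry and the contradiction argument based on Theorem~3.9 of \cite{NB-Laplacian}.
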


\begin{remark}
If $G=(V,E)$ is $d$-regular, then the eigenfunctions that satisfy Proposition \ref{prop:+f} are exactly the eigenfunctions of $-1$ for $B$, and the eigenfunctions of $1+\frac{1}{d-1}$ for $\mathcal{L}$. They are $|E|-|V|+1$ if $G$ is bipartite, and they are $|E|-|V|$ when it is not \cite{torres-1}.
\end{remark}

\begin{definition}
Let $G=(V,E)$ be a simple graph. The \emph{line graph}  $LG$ of $G$ has $E$ as vertex set, and it is such that $e_i\sim e_j$ in $LG$ if and only if $e_i$ and $e_j$ share a common vertex in $G$.
\end{definition}

Now, recall that, in the Introduction, we defined the $2M\times 2M$ matrix \begin{equation*}
    P=\begin{pmatrix}
  \begin{matrix}
  0 & \id \\
\id & 0
  \end{matrix}
\end{pmatrix},
\end{equation*}
which satisfies $P^\top=P$ and $P^2=\id$.

\begin{lemma}
    The matrix $\mathcal{L}P$ is symmetric.
\end{lemma}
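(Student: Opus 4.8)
The plan is to deduce this immediately from the fact, recalled above from \cite{NB-Laplacian}, that $\mathcal{L}$ is self-adjoint with respect to the $P$-product $(\mathbf{x},\mathbf{y})_P=\overline{\mathbf{x}}^\top P\mathbf{y}$. Spelling out self-adjointness, the identity $(\mathcal{L}\mathbf{x},\mathbf{y})_P=(\mathbf{x},\mathcal{L}\mathbf{y})_P$ for all $\mathbf{x},\mathbf{y}\in\mathbb{C}^{2M}$ reads $\overline{\mathbf{x}}^\top\,\overline{\mathcal{L}}^\top P\,\mathbf{y}=\overline{\mathbf{x}}^\top P\mathcal{L}\,\mathbf{y}$; since $\mathcal{L}$ is a real matrix we have $\overline{\mathcal{L}}^\top=\mathcal{L}^\top$, and since the equality holds for all $\mathbf{x},\mathbf{y}$ and $P$ is invertible, it forces the matrix identity $\mathcal{L}^\top P=P\mathcal{L}$.

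From here the argument is purely formal. Multiplying $\mathcal{L}^\top P=P\mathcal{L}$ on the right by $P$ and using $P^2=\id$ gives $\mathcal{L}^\top=P\mathcal{L}P$. Hence, using also $P^\top=P$,
\[
(\mathcal{L}P)^\top=P^\top\mathcal{L}^\top=P\,(P\mathcal{L}P)=P^2\mathcal{L}P=\mathcal{L}P,
\]
which is exactly the claim.

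For completeness I would also record the self-contained entrywise check, in case one prefers not to route through the $P$-product. Since $P$ exchanges the index of each oriented edge with that of its reverse, $(\mathcal{L}P)_{ij}=\mathcal{L}_{i,j^{-1}}$, where $j^{-1}$ denotes the index with $e_{j^{-1}}=e_j^{-1}$, so symmetry of $\mathcal{L}P$ amounts to $\mathcal{L}_{i,j^{-1}}=\mathcal{L}_{j,i^{-1}}$ for all $i,j$. If $i=j^{-1}$ both sides equal the diagonal entry $1$ of the random walk Laplacian $\mathcal{L}(\mathcal{G})$ (recall $\mathcal{G}$ has no loops). Otherwise $\mathcal{L}_{i,j^{-1}}=-B_{i,j^{-1}}/(\deg_G(\out(e_i))-1)$, and the condition $B_{i,j^{-1}}=1$, namely $\out(e_i)=\inp(e_{j^{-1}})$ and $\inp(e_i)\neq\out(e_{j^{-1}})$, rewrites via $\inp(e_j^{-1})=\out(e_j)$ and $\out(e_j^{-1})=\inp(e_j)$ as $\out(e_i)=\out(e_j)$ and $\inp(e_i)\neq\inp(e_j)$, which is symmetric in $i$ and $j$; moreover, whenever it holds, the two denominators $\deg_G(\out(e_i))-1$ and $\deg_G(\out(e_j))-1$ coincide. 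Thus $\mathcal{L}_{i,j^{-1}}=\mathcal{L}_{j,i^{-1}}$ in every case.

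There is essentially no obstacle here: the lemma is a two-line consequence of $P$-self-adjointness. The only point deserving a word of care is that self-adjointness with respect to the sesquilinear form $(\cdot,\cdot)_P$ — which is indefinite, but non-degenerate because $P$ is invertible — still yields the clean identity $\mathcal{L}^\top P=P\mathcal{L}$, since $\overline{\mathbf{x}}^\top(\mathcal{L}^\top P-P\mathcal{L})\mathbf{y}=0$ for all $\mathbf{x},\mathbf{y}$ implies $\mathcal{L}^\top P-P\mathcal{L}=0$.
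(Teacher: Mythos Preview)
Your proof is correct and follows essentially the same route as the paper: both arguments use the identity $\mathcal{L}^\top=P\mathcal{L}P$ (which the paper simply cites from \cite[Theorem~3.9]{NB-Laplacian}, while you re-derive it from $P$-self-adjointness) and then perform the same one-line manipulation using $P^\top=P$ and $P^2=\id$. Your additional entrywise verification is a pleasant self-contained alternative, but it is not present in the paper's proof.
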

\begin{proof}
    As shown in \cite[Theorem 3.9]{NB-Laplacian}, $\mathcal{L}^\top=P\mathcal{L}P$. Together with the fact that $P^{-1}=P$, this implies that $P\mathcal{L}^\top=\mathcal{L}P$, and using $P^\top=P$ this implies that $(\mathcal{L}P)^\top=\mathcal{L}P$.
\end{proof}

In the next proposition we show that, in order to study symmetric eigenfunctions for $\mathcal{L}$, we can reduce to study eigenfunctions of the symmetric real matrix $\mathcal{L}P$, %(equivalently, $P\mathcal{L}$), 
or to study the spectrum for the random walk Laplacian of $LG$.

\begin{proposition}Let $G=(V,E)$ be a simple graph with minimum degree $\ge2$, and let $\mathcal{G}=(\mathcal{V},\mathcal{E})$ be its non-backtracking graph. 
If $f:\mathcal{V}\to\mathbb{C}$ is symmetric, then $(\lambda,f)$ is an eigenpair for $\mathcal{L}$ if and only if $(\lambda,f)$ is an eigenpair for $\mathcal{L}P$.\newline Moreover, in this case, let $\tilde{f}:E\rightarrow \mathbb{R}$ be defined by
$\tilde{f}(\{v,w\}):=f([v,w]).$ Then, $(\lambda,\tilde{f})$ is an eigenpair for the random walk Laplacian of $LG$.
\end{proposition}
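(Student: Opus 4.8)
The plan is to verify the three claimed equivalences/implications one at a time, all by direct computation with the defining formula for $\mathcal{L}$ and exploiting symmetry of $f$.

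First I would handle the equivalence ``$(\lambda,f)$ eigenpair for $\mathcal{L}$ $\iff$ $(\lambda,f)$ eigenpair for $\mathcal{L}P$'' when $f$ is symmetric. The key observation is that, identifying a symmetric function $f$ with the vector $\mathbf{f}\in\mathbb{C}^{2M}$, symmetry says exactly $P\mathbf{f}=\mathbf{f}$. Hence $\mathcal{L}P\mathbf{f}=\mathcal{L}\mathbf{f}$, and the two eigenvalue equations $\mathcal{L}\mathbf{f}=\lambda\mathbf{f}$ and $\mathcal{L}P\mathbf{f}=\lambda\mathbf{f}$ are literally the same equation. This direction is essentially a one-line argument once the bookkeeping $P\mathbf{f}=\mathbf{f}$ is written down.

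Next I would establish the ``moreover'' part: that $(\lambda,\tilde f)$ is an eigenpair for the random walk Laplacian $\mathcal{L}(LG)$, where $\tilde f(\{v,w\}):=f([v,w])$ is well defined precisely because $f$ is symmetric. I would write out the eigenvalue equation $\mathcal{L}\mathbf{f}=\lambda\mathbf{f}$ coordinatewise at the oriented edge $[v,w]$: using the remark that gives the off-diagonal entries of $\mathcal{L}$ as $-\mathbb{P}(e_i\to e_j)$ and $1$ on the diagonal, this reads
\begin{equation*}
    f([v,w])-\frac{1}{\deg w-1}\sum_{z\sim w,\,z\neq v}f([w,z])=\lambda f([v,w]).
\end{equation*}
Now translate via $\tilde f$: the edge $[v,w]$ corresponds to the vertex $\{v,w\}$ of $LG$, and $[w,z]$ with $z\sim w$, $z\neq v$ corresponds to a vertex $\{w,z\}$ of $LG$ that is adjacent to $\{v,w\}$ (they share $w$). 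The subtlety is that in $LG$ the vertex $\{v,w\}$ has degree $(\deg v-1)+(\deg w-1)$, so the neighbours split into those sharing $w$ and those sharing $v$. To get an eigenvalue equation for $\mathcal{L}(LG)$ I need the analogous identity with the sum over neighbours sharing $v$; this is supplied by Proposition~\ref{prop:+f}, which (for symmetric $f$) gives
\begin{equation*}
    \frac{1}{\deg w-1}\sum_{z\sim w,\,z\neq v}f([w,z])=\frac{1}{\deg v-1}\sum_{y\sim v,\,y\neq w}f([y,v]),
\end{equation*}
so each of the two averages equals their common value, call it $c_{\{v,w\}}$. Then $\sum_{u\sim\{v,w\}\text{ in }LG}\tilde f(u)=(\deg w-1)c_{\{v,w\}}+(\deg v-1)c_{\{v,w\}}=(\deg v+\deg w-2)\,c_{\{v,w\}}$, and the original equation says $f([v,w])-c_{\{v,w\}}=\lambda f([v,w])$, i.e. $c_{\{v,w\}}=(1-\lambda)\tilde f(\{v,w\})$. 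Dividing the neighbour-sum by the $LG$-degree $\deg v+\deg w-2$ therefore returns $(1-\lambda)\tilde f(\{v,w\})$, which is exactly $(\id-\mathcal{D}^{-1}\mathcal{A})\tilde f=\lambda\tilde f$ at the vertex $\{v,w\}$. (I should note $LG$ has minimum degree $\geq 2$ under $\delta\geq 2$, so its random walk Laplacian is defined; if $\lambda=1$ a small separate remark handles $c_{\{v,w\}}=0$.)

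The main obstacle is the middle step: correctly matching the neighbourhood of $\{v,w\}$ in $LG$ against the two ``directions'' $w$ and $v$ of the oriented edge, and seeing that Proposition~\ref{prop:+f} is exactly what forces the two partial averages to coincide so that the $LG$-average collapses to a single scalar $c_{\{v,w\}}$. Once that is in place, the computation is bookkeeping. I would present the argument in the order above — the $P\mathbf f=\mathbf f$ equivalence, then the coordinatewise eigenequation, then invoking Proposition~\ref{prop:+f}, then reassembling into the $\mathcal{L}(LG)$ eigenequation.
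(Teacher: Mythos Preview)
Your proposal is correct and follows essentially the same route as the paper: the first equivalence via $P\mathbf{f}=\mathbf{f}$ is identical, and for the ``moreover'' part the paper also invokes Proposition~\ref{prop:+f} to equate the two partial averages and then reassembles them into the $LG$ random walk Laplacian action. Your parenthetical about $\lambda=1$ is unnecessary since $1\notin\sigma(\mathcal{L})$, but it does no harm.
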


\begin{proof}
Note that $f$ is symmetric if and only if $Pf=f$. Thus, if $(\lambda,f)$ is an eigenpair of $\mathcal{L}$, we have  that $\mathcal{L}Pf=\mathcal{L}f=\lambda f$, which implies that  $(\lambda,f)$ is an eigenpair for $\mathcal{L}P$. Conversely, if  $(\lambda,f)$ is an eigenpair of $\mathcal{L}P$, then $\mathcal{L}f=\mathcal{L}PPf=\mathcal{L}Pf=\lambda f$, and hence $(\lambda,f)$ is an eigenpair for $\mathcal{L}$. This proves the first claim.\newline 

Let now $\tilde{f}:E\rightarrow \mathbb{R}$ be defined by
$\tilde{f}(\{v,w\}):=f([v,w])=f([w,v])$, and let $\tilde{L}$ denote the random walk Laplacian of the line graph $LG$. \newline
By definition of $\tilde{L}$, $\tilde{f}$ and $\mathcal{L}$, by the assumptions and by Proposition \ref{prop:+f}, we have 
\begin{align*}
&\tilde{L} \tilde{f}(\{v,w\})\\&=\tilde{f}(\{v,w\})-\frac{1}{\deg_{LG}(\{v,w\})}\cdot \left(\sum_{\{u,u'\}\in E:\{u,u'\}\cap \{v,w\}\ne\emptyset}f(\{u,u'\})\right)
\\&=\frac{f([v,w])+f([w,v])}{2}-\frac{1}{\deg_{\mathcal{G}}([v,w])+\deg_{\mathcal{G}}([w,v])}\cdot \left(\sum_{[v,w]\to[w,z]}f([w,z])+\sum_{[y,v]\to[v,w]}f([y,v])\right)
\\&=\frac{f([v,w])+f([w,v])}{2}-\frac{\sum\limits_{[v,w]\to[w,z]}f([w,z])+\sum\limits_{[y,v]\to[v,w]}f([y,v])}{\deg_G w-1+\deg_G v-1}
\\&=f([v,w])-\frac{1}{\deg_G w-1}\cdot\left(\sum_{[v,w]\to[w,z]}f([w,z])\right)
\\&=\mathcal{L}f([v,w])
\\&=\lambda f([v,w])\\ &=\lambda \tilde{f}(\{v,w\}).
\end{align*}
Therefore, $(\lambda,\tilde{f})$ is an eigenpair for $\Tilde{L}$.
\end{proof}

\begin{remark}
        Let $G=(V,E)$ be a simple graph with minimum degree $\geq 2$, and let $\mathcal{G}=(\mathcal{V},\mathcal{E})$ be its non-backtracking graph. Let $[v_0,v_1]\in \mathcal
    V$ and let
    $$\mathcal{N}_k([v_0,v_1])\coloneqq \{[v_k,v_{k-1}]\,:\,[v_0,v_1]\rightarrow[v_1,v_2]\rightarrow\ldots\rightarrow[v_k,v_{k+1}]\},$$
   for $k\geq 1$. If $(\lambda, f)$ is an eigenpair for the non-backtracking Laplacian, then $\lambda\neq 1$ by \cite{NB-Laplacian}, and
\begin{align*}
    f([v_0,v_1])&=\frac{1}{(1-\lambda)}\cdot \frac{1}{(\deg v_1-1)}\cdot \left( \sum_{[v_0,v_1]\rightarrow [v_1,v_2]}f([v_1,v_2])\right)\\
    &=\frac{1}{(1-\lambda)^{k}}\cdot \left(\sum_{[v_0,v_1]\rightarrow\ldots\rightarrow[v_k,v_{k+1}]}\frac{1}{(\deg v_1-1)}\cdots\frac{1}{(\deg v_k-1)}\cdot f([v_k,v_{k+1}])\right).
\end{align*}Therefore, the value of $f$ on $[v_0,v_1]$ is completely determined by the value of $f$ on $\mathcal{N}_k([v_0,v_1])$, for any given $k\geq 1$.
\end{remark}

\section{Circularly partite graphs}\label{section:partite}

In this section, we generalize the notion of bipartite graphs by defining \emph{circularly $k$--partite graphs}. We also generalize the fact that $0\in\sigma(\mathcal{L})$ for all graphs and $2\in\sigma(\mathcal{L})$ for all bipartite graphs, by proving that, for all circularly $k$--partite graphs, if $\omega\in\mathbb{C}$ is a $k$-th root of unit, then $1-\omega\in\sigma(\mathcal{L})$.

\begin{definition}
Let $k\in\mathbb{N}_{\geq 1}$. A simple graph $G=(V,E)$ is \emph{circularly $k$--partite} if the set of its oriented edges can be partitioned as $\mathcal{V}=\mathcal{V}_1\sqcup \ldots \sqcup \mathcal{V}_k$, where the sets $\mathcal{V}_j$ are non-empty and satisfy the property that
\begin{equation*}
    [v,w]\in \mathcal{V}_j \Longrightarrow [w,z]\in \mathcal{V}_{j+1}, \, \forall z\sim w\,:\, z\neq v,
\end{equation*}where we also let $\mathcal{V}_0:=\mathcal{V}_k$ and $\mathcal{V}_{k+1}:=\mathcal{V}_1$. 
\end{definition}

Note that the above definition of circularly $k$--partite graphs is based on the set of oriented edges, but it does not require to construct the non-backtracking graph.\newline 

It is easy to check that all graphs are circularly $1$--partite. Moreover, the path graph of length $M$ is circularly $k$--partite for each $k\in\{1,\ldots,2M\}$, and the cycle graph on $N$ nodes is circularly $k$--partite if and only if $N$ is a multiple of $k$. Also, $G$ is circularly $2$--partite if and only if its non-backtracking graph is bipartite, therefore if and only if $G$ itself is bipartite. More generally, for any $k\geq 2$, it is easy to see that if $G$ is circularly $k$--partite, then its non-backtracking graph is a $k$--partite graph. However, the inverse implication does not always hold, as shown by the next example.

\begin{figure}[h]
    \centering
    \includegraphics[width=9cm]{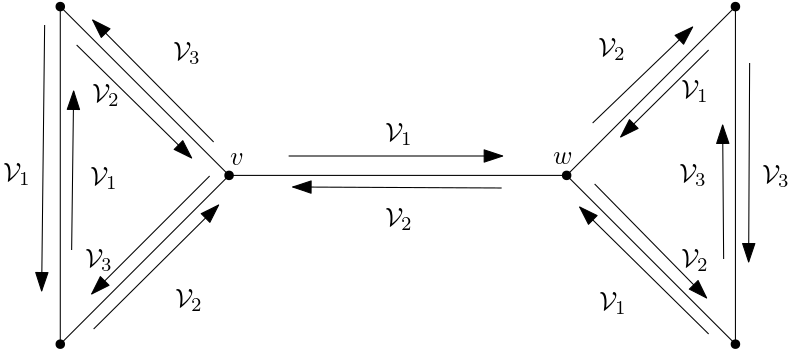}
    \caption{The graph from Example \ref{counterex} is not circularly $3$--partite.}
    \label{fig:counterex}
\end{figure}

\begin{example}\label{counterex}
Consider the simple graph $G$ in Figure \ref{fig:counterex}. To check whether $G$ is circularly $3$--partite, we can start by assuming, without loss of generality, that $[v,w]\in \mathcal{V}_1$. By definition of circularly $3$-partite, we can then inductively indicate to which set $\mathcal{V}_i$ each oriented edge should belong to. However, as shown in the figure, we would then have two oriented edges that belong to $\mathcal{V}_2$ and that go towards an oriented edge in $\mathcal{V}_1$, through the node $v$. This implies that $G$ is not circularly $3$-partite. However, the construction that we obtain shows that the non-backtracking graph of $G$ is $3$-partite.
\end{example}

\begin{figure}[h]
    \centering
    \includegraphics[width=9cm]{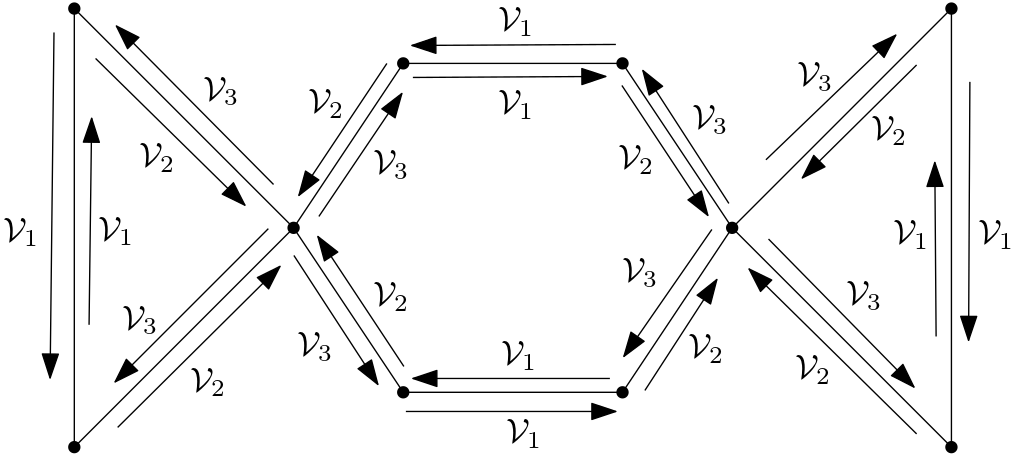}
    \caption{A circularly $3$--partite graph.}
    \label{fig:3partite}
\end{figure}

In Figure \ref{fig:3partite} we give an example of a circularly $3$--partite graph. Another example of circularly partite graphs are the petal graphs:

\begin{definition}The \emph{petal graph} with $p\geq 1$ petals of length $k\geq 3$ is the graph given by $p$ cycle graphs of length $k$, all having a common central vertex.
\end{definition}

Clearly, if $G$ is a petal graph whose petals have length $k$, then $G$ is circularly $k$--partite.

\begin{theorem}\label{thm:partite}
Let $k\in\mathbb{N}_{\geq 1}$. Let $G=(V,E)$ be a simple graph with minimum degree $\geq 2$, and let $\mathcal{G}=(\mathcal{V},\mathcal{E})$ be its non-backtracking graph. If $G$ is circularly $k$--partite and $\omega\in\mathbb{C}$ is a $k$-th root of unit, then $\omega\in\sigma(\mathcal{D}^{-1}\mathcal{A})$, or equivalently, $1-\omega\in\sigma(\mathcal{L})$.
\end{theorem}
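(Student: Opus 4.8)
The plan is to exhibit an explicit eigenfunction of $\mathcal{D}^{-1}\mathcal{A}$ with eigenvalue $\omega$, built directly from the circularly $k$--partite partition $\mathcal{V}=\mathcal{V}_1\sqcup\ldots\sqcup\mathcal{V}_k$. Define $f:\mathcal{V}\to\mathbb{C}$ by $f([v,w]):=\omega^{j}$ whenever $[v,w]\in\mathcal{V}_j$; since $\omega^k=1$, this is well defined (including the cyclic identifications $\mathcal{V}_0=\mathcal{V}_k$, $\mathcal{V}_{k+1}=\mathcal{V}_1$). It is nonzero because every $\mathcal{V}_j$ is nonempty. The claim is that $\mathcal{D}^{-1}\mathcal{A}f=\omega f$, i.e.\ that for every oriented edge $[v,w]$,
\begin{equation*}
\frac{1}{\deg_{\mathcal{G}}([v,w])}\sum_{[v,w]\to[w,z]}f([w,z])=\omega\, f([v,w]).
\end{equation*}

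First I would fix $[v,w]\in\mathcal{V}_j$ and recall that $\deg_{\mathcal{G}}([v,w])=\deg_G(w)-1$, and that the out-neighbours of $[v,w]$ in $\mathcal{G}$ are exactly the edges $[w,z]$ with $z\sim w$, $z\neq v$; there are precisely $\deg_G(w)-1$ of them. By the defining property of circularly $k$--partite graphs, each such $[w,z]$ lies in $\mathcal{V}_{j+1}$, so $f([w,z])=\omega^{j+1}$ for all of them. Hence the left-hand side equals $\frac{1}{\deg_G(w)-1}\cdot(\deg_G(w)-1)\cdot\omega^{j+1}=\omega^{j+1}=\omega\cdot\omega^{j}=\omega f([v,w])$, which is exactly what is needed. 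This shows $\omega\in\sigma(\mathcal{D}^{-1}\mathcal{A})$, and since $\mathcal{L}=\id-\mathcal{D}^{-1}\mathcal{A}$, equivalently $1-\omega\in\sigma(\mathcal{L})$ with the same eigenfunction $f$.

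There is essentially no hard step here: the entire content is the bookkeeping that (a) $f$ is well defined thanks to $\omega^k=1$ together with the cyclic convention on the indices, and (b) the circularly $k$--partite condition forces the out-neighbourhood of any edge in $\mathcal{V}_j$ to sit entirely in $\mathcal{V}_{j+1}$, so that the averaging in $\mathcal{D}^{-1}\mathcal{A}$ simply multiplies the constant value $\omega^{j+1}$ by $1$. The only point worth stating carefully is that the averaging is over \emph{all} out-neighbours and there are exactly $\deg_G(w)-1$ of them, matching the degree normalization, so no fractional weights survive. If one wants to double-check the endpoint cases $j=k$ (out-neighbours in $\mathcal{V}_{k+1}=\mathcal{V}_1$, value $\omega^{k+1}=\omega=\omega\cdot\omega^k=\omega f$) they work identically because all index arithmetic is mod $k$. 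I would present the computation in one short displayed \texttt{align*} and conclude.
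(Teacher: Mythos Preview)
Your proof is correct and takes essentially the same approach as the paper: define $f([v,w]):=\omega^j$ for $[v,w]\in\mathcal{V}_j$ and verify directly that the circularly $k$--partite condition makes every out-neighbour of an edge in $\mathcal{V}_j$ lie in $\mathcal{V}_{j+1}$, so the average collapses to $\omega^{j+1}=\omega f([v,w])$. The paper's proof is the same computation, just written more tersely.
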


\begin{proof}
Let $f:\mathcal{V}\rightarrow\mathbb{C}$ be defined by
\begin{equation*}
    f([v,w]):=\omega^j \iff [v,w]\in \mathcal{V}_j.
\end{equation*}Then, given $[v,w]\in \mathcal{V}_j$,
\begin{equation*}
    \omega \cdot f([v,w])=\omega^{j+1}=\frac{1}{\deg w-1}\cdot \left(\sum_{\substack{z\sim w\,:\\ z\neq v}} f([w,z])\right),
\end{equation*}since $[w,z]\in \mathcal{V}_{j+1}$ for all $z\sim w$ such that $z\neq v$. Hence, $(\omega,f)$ is an eigenpair for $\mathcal{D}^{-1}\mathcal{A}$. Equivalently, $(1-\omega,f)$ is an eigenpair for $\mathcal{L}$.
\end{proof}

\begin{remark}
Theorem \ref{thm:partite} applied to $k=1$ simply says that $0\in \sigma(\mathcal{L})$ for all graphs, while for $k=2$ it says that $2\in \sigma(\mathcal{L})$ for all bipartite graphs. If we apply it to petal graphs whose petals have size $k$, we can infer that the $k$-th roots of unit are eigenvalues for $\mathcal{D}^{-1}\mathcal{A}$ in this case.
\end{remark}

\section{Spectral gap from 1}\label{section:gap1}
Given an operator $\mathcal{O}$, we let $\sigma(\mathcal{O})$ denote its spectrum, seen as a multiset that contains the eigenvalues with their algebraic multiplicity, and we let
\begin{equation*}
    \rho(\mathcal{O}):=\max_{\lambda\in\sigma(\mathcal{O})} |\lambda|, \quad r(\mathcal{O}):=\min_{\lambda\in\sigma(\mathcal{O})} |\lambda|.
\end{equation*}

We recall that, for a simple graph with minimum degree $\geq 2$, $1$ is not in the spectrum of the non-backtracking Laplacian $\mathcal{L}$, hence the spectral gap from $1$, 
\begin{equation*}
\varepsilon\coloneqq\min_{\lambda\in \sigma(\mathcal{L})}\left|1-\lambda\right|=\min_{\lambda\in \sigma(\mathcal{D}^{-1}\mathcal{A})}|\lambda|=r(\mathcal{D}^{-1}\mathcal{A}),
\end{equation*}is positive. In \cite{NB-Laplacian} it is shown that, for a graph with maximum vertex degree $\Delta$, $$\varepsilon \geq \frac{1}{\Delta-1},$$ and the inequality is sharp. Here we prove a sharp upper bound for $\varepsilon$, as well as a lower bound for 
\begin{equation*}
    \mathcal{E}:=\max_{\lambda\in \sigma(\mathcal{L})\setminus\{0\}}\left|1-\lambda\right|=\max_{\lambda\in \sigma(\mathcal{D}^{-1}\mathcal{A})\setminus\{1\}}|\lambda|,
\end{equation*}where the above multiset difference $\sigma(\mathcal{D}^{-1}\mathcal{A})\setminus\{0\}$ means that we are removing one instance of $0$ from $\sigma(\mathcal{D}^{-1}\mathcal{A})$.

\begin{theorem}\label{thm:epsilon}
Let $k\in\mathbb{N}_{\geq 1}$. If $G$ is a circularly $k$--partite graph, then
\begin{equation*}
    \varepsilon\leq \frac{1}{\sqrt[2M-k]{\prod_{v\in V}(\deg v -1)^{\deg v-1}}}\leq\mathcal{E}.
\end{equation*}If $k>1$, then $\mathcal{E}=1$.
\end{theorem}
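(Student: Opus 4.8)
The plan is to prove the two inequalities separately, since the quantity in the middle is a geometric-mean-type expression and $\varepsilon$, $\mathcal{E}$ are the minimum and maximum moduli of the nonzero eigenvalues of $\mathcal{D}^{-1}\mathcal{A}$. First I would recall that $G$ circularly $k$-partite means $\mathcal{V} = \mathcal{V}_1 \sqcup \cdots \sqcup \mathcal{V}_k$, and — crucially — that the non-backtracking graph $\mathcal{G}$ decomposes (after possibly refining the partition along connected components) in a way that lets one iterate the operator $\mathcal{D}^{-1}\mathcal{A}$ exactly $2M$ steps around a closed non-backtracking cycle that visits every oriented edge. The key structural fact I would exploit is that the characteristic polynomial of $\mathcal{D}^{-1}\mathcal{A}$, or equivalently the product of its eigenvalues, relates to $\det(\mathcal{D}^{-1}\mathcal{A})$: since $\det \mathcal{A} = \det B$ is the non-backtracking matrix determinant and $\det \mathcal{D} = \prod_{v}(\deg v - 1)^{\deg v}$ (because each vertex $v$ contributes $\deg v$ oriented edges of $\mathcal{G}$-degree $\deg v - 1$, by Lemma~\ref{lemma:VertexCount}), one gets a handle on $\prod_{\lambda \in \sigma(\mathcal{D}^{-1}\mathcal{A})} \lambda$ up to the vanishing eigenvalues.

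The main idea for the lower bound on $\mathcal{E}$ (equivalently the upper bound on $\varepsilon$): by Theorem~\ref{thm:partite}, all $k$-th roots of unity lie in $\sigma(\mathcal{D}^{-1}\mathcal{A})$; in particular $1 \in \sigma(\mathcal{D}^{-1}\mathcal{A})$, and when $k > 1$ there is a primitive $k$-th root of unity $\omega$ with $|\omega| = 1$ in the spectrum, so $\mathcal{E} \geq 1$; combined with the fact that all eigenvalues of $\mathcal{D}^{-1}\mathcal{A}$ lie in $\overline{D(0,1)}$ (since $\mathcal{L}$'s eigenvalues lie in $D(1,1)$), this forces $\mathcal{E} = 1$ when $k > 1$, giving the last sentence immediately. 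For the generic bound, I would count: $\mathcal{D}^{-1}\mathcal{A}$ has $2M$ eigenvalues, of which $k$ of them are the $k$-th roots of unity (from Theorem~\ref{thm:partite}), and the remaining $2M - k$ eigenvalues $\mu_1, \dots, \mu_{2M-k}$ have product of absolute value equal to
\begin{equation*}
\prod_{i=1}^{2M-k} |\mu_i| = \frac{|\det \mathcal{A}|}{\prod_{v \in V}(\deg v - 1)^{\deg v}} \cdot (\text{correction from the }k\text{-th roots of unity}).
\end{equation*}
Then $\varepsilon = \min_i |\mu_i| \leq \big(\prod_i |\mu_i|\big)^{1/(2M-k)}$ and symmetrically $\mathcal{E} \geq \big(\prod_i |\mu_i|\big)^{1/(2M-k)}$, so it remains to show that this geometric mean equals $\big(\prod_v (\deg v - 1)^{\deg v - 1}\big)^{-1/(2M-k)}$. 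This reduces to computing $|\det \mathcal{A}| = |\det B|$ for circularly $k$-partite graphs and checking it cancels the excess $\prod_v (\deg v - 1)$ exactly against the roots-of-unity factor.

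The hard part will be pinning down $|\det B|$ for circularly $k$-partite graphs and verifying the bookkeeping cancels to give precisely $\prod_v (\deg v - 1)^{\deg v - 1}$ in the denominator rather than $\prod_v (\deg v - 1)^{\deg v}$. I expect one can compute $\det B$ via the block structure induced by the partition $\mathcal{V}_1, \dots, \mathcal{V}_k$: relabeling oriented edges by their part makes $B$ (hence $\mathcal{D}^{-1}\mathcal{A}$) block-circulant-like across the $k$ parts, so that $\det(\mathcal{D}^{-1}\mathcal{A} - tI)$ factors through the $k$-th roots of unity, reducing the computation to a single $(2M/k)\times(2M/k)$ block whose determinant is more tractable — essentially $\det(I - \omega^{-k} C)$ where $C$ is built from a "folded" non-backtracking operator on $G$. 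An alternative and perhaps cleaner route: exhibit an explicit invariant subspace (functions of the form $[v,w] \mapsto \omega^{j} g([v,w])$ with $g$ living on the folded structure) on which $\mathcal{D}^{-1}\mathcal{A}$ acts, and on the complement argue the product of eigenvalues directly via a telescoping non-backtracking cycle covering all $2M$ oriented edges exactly once (which exists precisely because $\mathcal{G}$ is an Eulerian-type structure on regular-out-degree... no — rather, use the trace/determinant identity and the known $\det$ formula from Ihara-type theory). I would present whichever of these yields the sharpness claim most transparently, since sharpness (attained by, e.g., petal graphs or regular circularly $k$-partite graphs) must also be verified at the end.
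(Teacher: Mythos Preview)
Your overall architecture is exactly the paper's: compute $|\det(\mathcal{D}^{-1}\mathcal{A})|$, peel off the $k$ eigenvalues of modulus $1$ supplied by Theorem~\ref{thm:partite}, and bound $\varepsilon$ and $\mathcal{E}$ by the geometric mean of the remaining $2M-k$ moduli. The last sentence ($\mathcal{E}=1$ for $k>1$) is also handled the same way.

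Where you diverge is in what you call ``the hard part.'' You treat the evaluation of $|\det\mathcal{A}|=|\det B|$ as something that requires the circularly $k$--partite hypothesis and propose a block-circulant decomposition along $\mathcal{V}_1,\dots,\mathcal{V}_k$. This is unnecessary: $|\det(\mathcal{D}^{-1}\mathcal{A})|$ can be computed for \emph{every} simple graph with minimum degree $\ge 2$, with no partiteness assumption at all. The paper does this in one line by inserting $P$ and using the known spectrum of $\mathcal{A}P$ (equivalently $P\mathcal{A}$) from \cite[Lemma~4.3]{NB-Laplacian}: its eigenvalues are $-1$ with multiplicity $2M-N$ together with $\deg v-1$ for each $v\in V$, so
\[
|\det(\mathcal{D}^{-1}\mathcal{A})|=\frac{|\det(P\mathcal{A})|}{\det\mathcal{D}}
=\frac{\prod_{v}(\deg v-1)}{\prod_{v}(\deg v-1)^{\deg v}}
=\frac{1}{\prod_{v}(\deg v-1)^{\deg v-1}}.
\]
Note also that the ``correction from the $k$-th roots of unity'' you leave dangling is trivial: the product of all $k$-th roots of unity has modulus $1$, so removing them from $\prod_\lambda|\lambda|$ changes nothing. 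With these two observations your proposal collapses to the paper's proof; the circularly $k$--partite assumption is used \emph{only} to guarantee the $k$ unit-modulus eigenvalues, not to evaluate the determinant. (Sharpness is established separately in Corollary~\ref{corollary:sharp} via petal graphs, so you need not fold it into this argument.)
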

\begin{proof}
We have \begin{align*}
    \prod_{\lambda\in \sigma(\mathcal{D}^{-1}\mathcal{A})\,:\,|\lambda|\neq  1}\left|\lambda\right|&=\left|\prod_{\lambda\in \sigma(\mathcal{D}^{-1}\mathcal{A})}\lambda\right|
    \\&=\left|\det(\mathcal{D}^{-1}\mathcal{A})\right|\\ &=\left|\frac{\det(P)\det(\mathcal{D}^{-1})\det(\mathcal{A})}{\det(P)}\right|\\
    &=\left|\frac{\det(P\mathcal{A})}{\det (\mathcal{D})}\right|\\ &=\frac{\prod_{v\in V}(\deg v-1)}{\prod_{v\in V}(\deg v -1)^{\deg v}}\\
    &=\frac{1}{\prod_{v\in V}(\deg v -1)^{\deg v-1}},
\end{align*}where we have used the following facts:
\begin{itemize}
    \item The spectrum of $P$ is given by $1$, with multiplicity $M$, and $-1$, with multiplicity $M$.
    \item The spectrum of $\mathcal{D}$ is given by its diagonal entries. These are $\deg v-1$ for each element of the form $[w,v]\in \mathcal{V}$, and there are $\deg v$ of these elements for each $v\in V$.
    \item The spectrum of $P\mathcal{A}$ is equal to the spectrum of $\mathcal{A}P$. By Lemma 4.3 in \cite{NB-Laplacian}, this is given by $-1$, with multiplicity $2M - N$, and $\deg v-1$, for $v\in V$.
\end{itemize}
Now, by Theorem \ref{thm:partite}, there are at least $k$ eigenvalues $\mu_1,\ldots,\mu_k$ of $\mathcal{D}^{-1}\mathcal{A}$ with modulus $1$. Since $\varepsilon\leq |\lambda|\leq \mathcal{E}$ for all $\lambda\in \sigma(\mathcal{D}^{-1}\mathcal{A})\setminus\{\mu_1,\ldots,\mu_k\}$, this implies that
\begin{equation*}
    \varepsilon^{2M-k}\leq \frac{1}{\prod_{v\in V}(\deg v -1)^{\deg v-1}}\leq \mathcal{E}^{2M-k},
\end{equation*}therefore
\begin{equation*}
    \varepsilon\leq \frac{1}{\sqrt[2M-k]{\prod_{v\in V}(\deg v -1)^{\deg v-1}}}\leq \mathcal{E}.
\end{equation*}If, in particular, $k>1$, then there are at least two eigenvalues of $\mathcal{D}^{-1}\mathcal{A}$ with modulus $1$, implying that $\mathcal{E}=1$ in this case.
\end{proof}

 \begin{corollary}For any graph $G$ with minimum degree $\delta\geq 2$ and maximum degree $\Delta$,
 \begin{equation*}
     \varepsilon\leq \frac{1}{\sqrt[\Delta]{(\delta-1)^{(\delta-1)}}}.
 \end{equation*}In particular, $\varepsilon\rightarrow 0$ as $\delta\rightarrow\infty$, if $\Delta=O(\delta)$.
 \end{corollary}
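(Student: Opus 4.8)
The plan is to derive the Corollary directly from Theorem \ref{thm:epsilon} by choosing the weakest circular partition available, namely $k=1$, and then bounding the resulting product from below by a single quantity depending only on $\delta$ and $\Delta$. Since every simple graph is circularly $1$--partite (as noted right after the definition), Theorem \ref{thm:epsilon} applied with $k=1$ yields immediately
\begin{equation*}
    \varepsilon \leq \frac{1}{\sqrt[2M-1]{\prod_{v\in V}(\deg v - 1)^{\deg v - 1}}}.
\end{equation*}
So the only real work is to show that the right-hand side is at most $\frac{1}{\sqrt[\Delta]{(\delta-1)^{\delta-1}}}$, i.e. that
\begin{equation*}
    \prod_{v\in V}(\deg v - 1)^{\deg v - 1} \;\geq\; (\delta-1)^{(\delta-1)(2M-1)/\Delta}.
\end{equation*}

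To see this, first note that the function $t\mapsto t\log t$ (equivalently $t\mapsto (\delta')^{\delta'}$ in multiplicative form) is increasing for $t\geq 1$, so each factor satisfies $(\deg v - 1)^{\deg v - 1} \geq (\delta-1)^{\delta-1}$ because $\deg v \geq \delta \geq 2$ and hence $\deg v - 1 \geq \delta - 1 \geq 1$. Therefore
\begin{equation*}
    \prod_{v\in V}(\deg v - 1)^{\deg v - 1} \;\geq\; (\delta-1)^{(\delta-1)N},
\end{equation*}
where $N=|V|$. Now I need $(\delta-1)^{(\delta-1)N} \geq (\delta-1)^{(\delta-1)(2M-1)/\Delta}$, which (since $\delta - 1 \geq 1$) reduces to the inequality $N \geq (2M-1)/\Delta$, i.e. $\Delta N \geq 2M-1$. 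This is clear: $2M = \sum_{v\in V}\deg v \leq \Delta N$, so indeed $\Delta N \geq 2M > 2M - 1$. Combining the displays gives
\begin{equation*}
    \varepsilon \leq \frac{1}{\sqrt[2M-1]{\prod_{v\in V}(\deg v - 1)^{\deg v - 1}}} \leq \frac{1}{\sqrt[2M-1]{(\delta-1)^{(\delta-1)N}}} \leq \frac{1}{\sqrt[\Delta]{(\delta-1)^{\delta-1}}},
\end{equation*}
where the last step uses $(\delta-1)^{(\delta-1)N/(2M-1)} \geq (\delta-1)^{(\delta-1)/\Delta}$, again valid because $\delta-1\geq 1$ and $N/(2M-1)\geq 1/\Delta$. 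A small degenerate case worth a sentence: if $\delta = 2$ then $\delta - 1 = 1$ and both sides of the claimed inequality equal $1$, so the bound $\varepsilon \leq 1$ holds trivially (and is consistent with the spectrum lying in $D(1,1)$).

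The main obstacle, such as it is, is purely bookkeeping with the exponents: one must be careful that all bases are $\geq 1$ so that monotonicity of exponentiation goes the right way, and that the passage from the $(2M-1)$-th root to the $\Delta$-th root is licensed by $N \geq (2M-1)/\Delta$ rather than the other direction. There is no deep step. For the final assertion, if $\Delta = O(\delta)$, write $\Delta \leq c\,\delta$ for some constant $c$ and all large $\delta$; then $\frac{1}{\sqrt[\Delta]{(\delta-1)^{\delta-1}}} = (\delta-1)^{-(\delta-1)/\Delta} \leq (\delta-1)^{-(\delta-1)/(c\delta)}$, and since $(\delta-1)/(c\delta) \to 1/c > 0$ while $\delta - 1 \to \infty$, the exponent $\frac{\delta-1}{c\delta}\log(\delta-1) \to \infty$, so the whole expression tends to $0$. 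Hence $\varepsilon \to 0$, completing the proof.
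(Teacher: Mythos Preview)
Your proof is correct and follows essentially the same approach as the paper: apply Theorem~\ref{thm:epsilon} with $k=1$, bound each factor $(\deg v-1)^{\deg v-1}\ge(\delta-1)^{\delta-1}$, and then use $2M=\sum_v\deg v\le N\Delta$ to pass from the $(2M-1)$-th root to the $\Delta$-th root. The paper's proof is just a one-line pointer to these ingredients, and you have spelled out the details carefully (including the degenerate $\delta=2$ case and the asymptotic claim).
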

 \begin{proof}
  It follows from the fact that $$2M=\sum_{v\in V}\deg v\leq N\Delta,$$ together with Theorem \ref{thm:epsilon}. 
 \end{proof}
 
We now make the following
\begin{conjecture}\label{conj:epsilon}
If $G$ is not the cycle graph, then\begin{equation*}
    \varepsilon\leq \frac{1}{\delta-1}.
\end{equation*}
\end{conjecture}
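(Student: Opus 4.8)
The plan is to show that the conjectured bound $\varepsilon \le \frac{1}{\delta-1}$ follows from the already-proven Theorem~\ref{thm:epsilon} once we can produce, for any non-cycle graph $G$ with minimum degree $\delta \ge 2$, a circularly $k$--partite ``structure'' that is tight enough, or else to argue directly by exhibiting an eigenvalue of $\mathcal{D}^{-1}\mathcal{A}$ of modulus at least $\frac{1}{\delta-1}$. The cleanest route I would try first is the latter: find an explicit approximate eigenfunction. Concretely, fix a vertex $v$ of minimum degree $\delta$, and try to build a function $f$ on $\mathcal{V}$ that is supported, or at least concentrated, on the oriented edges pointing away from (or into) a short non-backtracking closed structure through $v$, arranged so that applying $\mathcal{D}^{-1}\mathcal{A}$ multiplies $f$ by roughly $\frac{1}{\delta-1}$ on the relevant support. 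Because $G$ is not a cycle, Proposition~\ref{prop:bipartite}-style reasoning guarantees the existence of a non-backtracking closed walk that returns to $v$, and along such a walk the ``contraction factor'' of $\mathcal{D}^{-1}\mathcal{A}$ is $\prod (\deg w - 1)^{-1} \le (\delta-1)^{-\ell}$ over $\ell$ steps; taking $\ell$-th roots gives a quantity $\le \frac{1}{\delta-1}$, which is exactly the shape of bound we want for $r(\mathcal{D}^{-1}\mathcal{A}) = \varepsilon$.

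The key steps, in order, would be: (1) reduce to $G$ connected and not a cycle, so that $\mathcal{G}$ is strongly connected and $\mathcal{D}^{-1}\mathcal{A}$ is irreducible; (2) observe that $\varepsilon = r(\mathcal{D}^{-1}\mathcal{A})$ equals the minimum modulus of an eigenvalue, so it suffices to show \emph{some} eigenvalue has modulus $\le \frac{1}{\delta-1}$ --- equivalently, by the determinant identity in the proof of Theorem~\ref{thm:epsilon}, that the geometric mean of the nonzero-ish eigenvalue moduli does not force all of them above $\frac{1}{\delta-1}$; (3) make this precise by finding a $\mathcal{D}^{-1}\mathcal{A}$-invariant subspace (for instance one spanned by eigenfunctions localized on a subgraph containing a minimum-degree vertex and a cycle through it) on which $\det$ is controlled by $\prod_{w}(\deg w - 1)^{-1}$ with all relevant $\deg w \le$ something close to $\delta$, and then apply the same geometric-mean argument as in Theorem~\ref{thm:epsilon} but restricted to that subspace. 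A natural candidate for such a subspace: take a shortest cycle $C$ through a minimum-degree vertex, or more robustly use that $G$ contains either a petal-like configuration or two cycles, and invoke Theorems 5.4--5.5 from \cite{NB-Laplacian} — a $d$-regular chordless cycle gives the eigenvalue $\frac{1}{d-1}$ of $\mathcal{D}^{-1}\mathcal{A}$, and a cycle of length $\ell$ with one high-degree vertex gives $\frac{1}{\sqrt[\ell]{d-1}}$; the issue is that these require degree-$2$ (or constant-degree) vertices on the cycle, which need not exist.

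The main obstacle is exactly that: in a general graph there may be no chordless cycle all of whose vertices have degree close to $\delta$, so neither Theorem~5.4 nor Theorem~5.5 of \cite{NB-Laplacian} applies directly, and there is no obvious circularly $k$--partite structure to feed into Theorem~\ref{thm:epsilon} with the right product. I expect one must instead prove a genuinely new localization statement: something like ``if $v$ has degree $\delta$ and lies on a cycle, then $\mathcal{L}$ has an eigenvalue $\lambda$ with $|1-\lambda| \le \frac{1}{\delta-1}$,'' proved by a variational or fixed-point argument (e.g.\ Brouwer/Perron–Frobenius applied to the nonnegative operator $(\delta-1)\,\mathcal{D}^{-1}\mathcal{A}$ restricted to edges around $v$, or a Rouché-type perturbation argument on the characteristic polynomial expanded around the cycle through $v$). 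A secondary difficulty is sharpness-consistency: the cycle graph is excluded precisely because for $C_N$ one has $\varepsilon = 1 > \frac{1}{\delta-1} = 1$ only borderline — actually $\delta = 2$ gives $\frac{1}{\delta-1}=1$, so the cycle is the equality-is-violated-by-connectivity edge case, and any proof must use ``$G$ is not a cycle'' in an essential way, presumably through the existence of a vertex of degree $\ge 3$ or of a second independent cycle. I would structure the final argument around that vertex of degree $\ge 3$, since that is the minimal structural difference from the cycle graph and is where the extra eigenvalue room comes from.
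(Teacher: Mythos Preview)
The statement you are trying to prove is labeled a \emph{Conjecture} in the paper, not a theorem: the authors do not prove it. They only remark that it holds for regular graphs (where $\varepsilon = \frac{1}{\delta-1}$) and for graphs possessing a cycle of the special types covered by Theorems~5.4--5.6 of \cite{NB-Laplacian}, and they give examples comparing its strength to Theorem~\ref{thm:epsilon}. So there is no ``paper's own proof'' to compare your proposal against.

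As for the proposal itself, it is a strategy sketch rather than a proof, and you correctly identify the central obstacle yourself: the known eigenvalue-producing results (constant-degree chordless cycles, or cycles with a single high-degree vertex) do not cover arbitrary graphs, and there is no evident circularly $k$--partite structure to feed into Theorem~\ref{thm:epsilon}. The suggested remedies --- a Perron--Frobenius or Rouch\'e argument localized near a minimum-degree vertex, or restriction to an invariant subspace supported on a cycle through such a vertex --- are plausible directions but none of them is carried out, and each faces real difficulties (for instance, $\mathcal{D}^{-1}\mathcal{A}$ restricted to edges ``around $v$'' is not in general an invariant subspace, and the determinant/geometric-mean computation of Theorem~\ref{thm:epsilon} uses the full operator, not a restriction). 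In short: the proposal does not close the gap, and the paper does not either; the statement remains open.
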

By \cite{NB-Laplacian}, we know that Conjecture \ref{conj:epsilon} holds for all regular graphs, for which $\varepsilon= \frac{1}{\delta-1}$, as well as for all graphs having at least one cycle that satisfies Theorem 5.6 in \cite{NB-Laplacian} (as for instance a cycle in which all vertices have the same degree, or a cycle in which exactly one vertex has degree larger than $2$). However, as shown by the next examples, the bound in the above conjecture is not always better than the bound in Theorem \ref{thm:epsilon}. 

\begin{example}Let $G$ be the wheel graph on $N\geq 4$ nodes. Then, $\delta=3$, $\Delta=N-1$ and $M=2\Delta$. Also, Conjecture \ref{conj:epsilon} in this case tells us that
\begin{equation*}
   \varepsilon\leq  \frac{1}{\delta-1}=\frac{1}{2},
\end{equation*}which is always true. In fact, computationally we observed that $\varepsilon=\frac{1}{2}$ for $N=4$, and then $\varepsilon$ decreases for larger $N$. From Theorem \ref{thm:epsilon}, on the other hand, we can infer that
\begin{equation*}
    \varepsilon \leq \frac{1}{\sqrt[2M-1]{\prod_{v\in V}(\deg v -1)^{\deg v-1}}}= \frac{1}{\sqrt[4\Delta-1]{(\Delta-1)^{(\Delta-1)}\cdot 2^{2\Delta}}}=:g(\Delta),
\end{equation*}where for instance
\begin{equation*}
    g(\Delta)\approx \begin{cases}0.41 & \text{for }\Delta=10,\\
    0.35 & \text{for }\Delta=20,\\
    0.22 & \text{for }\Delta=100.
    \end{cases}
\end{equation*}Hence, the bound in Theorem \ref{thm:epsilon} is better than the bound in Conjecture \ref{conj:epsilon} in these cases.
\end{example}

The next theorem gives us all the eigenvalues of the non-backtracking Laplacian for the petal graph. It will allow us, in Corollary \ref{corollary:sharp} below, to show that the first bound in Theorem \ref{thm:epsilon} is sharp.

\begin{theorem}\label{thm:petal}
Let $N\geq 5$, $p\geq 2$ and $k\geq 3$. Let $G=(V,E)$ be the petal graph on $N$ nodes, consisting of $p$ petals of length $k$, and let $\mathcal{G}=(\mathcal{V},\mathcal{E})$ be its non-backtracking graph. Let also $\omega_1,\ldots,\omega_k$ be the $k$-th roots of unit. Then, the eigenvalues of $\mathcal{D}^{-1}\mathcal{A}$ are
\begin{equation*}
    \omega_1,\ldots,\omega_k,\frac{\omega_1}{\sqrt[k]{2p-1}},\ldots,\frac{\omega_k}{\sqrt[k]{2p-1}}.
\end{equation*}Equivalently, the eigenvalues of $\mathcal{L}$ are
\begin{equation*}
    1-\omega_1,\ldots,1-\omega_k,1-\frac{\omega_1}{\sqrt[k]{2p-1}},\ldots,1-\frac{\omega_k}{\sqrt[k]{2p-1}}.
\end{equation*}
\end{theorem}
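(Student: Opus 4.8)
The plan is to explicitly diagonalize $\mathcal{D}^{-1}\mathcal{A}$ by exploiting the rotational and cyclic structure of the petal graph. Let $c$ denote the central vertex (degree $2p$) and label the vertices of the $i$-th petal as $c = v^{(i)}_0, v^{(i)}_1, \ldots, v^{(i)}_{k-1}, v^{(i)}_k = c$, so that each non-central vertex has degree $2$. The oriented edges of $\mathcal{G}$ therefore split according to which petal they lie on and which of the two directions around that petal they point; call these the \emph{clockwise} and \emph{counterclockwise} orientations. For an oriented edge $[v,w]$ with $w$ non-central (degree $2$), the non-backtracking successor is unique, so the corresponding row of $\mathcal{D}^{-1}\mathcal{A}$ has a single $1$; only at edges pointing \emph{into} the center does branching occur, with weight $\frac{1}{2p-1}$ spread over the $2p-1$ non-backtracking continuations.

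First I would build eigenfunctions from the circularly $k$--partite structure: by Theorem~\ref{thm:partite}, since the petal graph is circularly $k$--partite, each $k$-th root of unity $\omega_j$ yields the eigenvalue $\omega_j$ of $\mathcal{D}^{-1}\mathcal{A}$, with the eigenfunction constant equal to $\omega_j^{\,\ell}$ on the class $\mathcal{V}_\ell$. That accounts for $k$ of the $2M = 2pk$ eigenvalues. For the remaining $2pk - k = (2p-1)k$ eigenvalues, I would look for eigenfunctions $f$ of the form: fix a choice of "direction of travel" around the petals and set $f$ to be zero on all edges not consistent with a single directed closed walk of the natural form (go out along one petal, come back along another), and along such a closed walk let $f$ decay geometrically by a factor $\mu$ each step, where $\mu^k = $ (some scalar depending on the branching at the center). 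Concretely, on the $(2p-1)$-dimensional space of "differences" of petal-directions at the center (the analogue of the kernel structure that appears for $d$-regular graphs, cf. the remark after Proposition~\ref{prop:+f} and the block structure used in Theorem~\ref{thm:epsilon}), the operator "go around once" acts as $\frac{1}{2p-1}$ times a circulant-like map, and taking $k$-th roots produces exactly the eigenvalues $\frac{\omega_j}{\sqrt[k]{2p-1}}$, each with multiplicity $2p-1$, for a total of $(2p-1)k$. Adding the two families gives all $2pk$ eigenvalues, matching the stated list (where in the statement the multiplicities are suppressed).

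A cleaner route, which I would actually pursue, is to consider the operator $\mathcal{O} := (\mathcal{D}^{-1}\mathcal{A})^k$ restricted to, say, the counterclockwise edges based at the center: following a non-backtracking walk of length exactly $k$ starting from an edge leaving $c$ returns to an edge leaving $c$, and the transition weight is $\frac{1}{2p-1}$ onto each of the $2p-1$ "other" petals (you cannot return along the petal you just traversed). Thus $\mathcal{O}$ restricted to this $2p$-dimensional invariant subspace is $\frac{1}{2p-1}(J - I)$ where $J$ is the all-ones matrix, whose eigenvalues are $1$ (once) and $-\frac{1}{2p-1}$ (with multiplicity $2p-1$). Since $\mathcal{O} = (\mathcal{D}^{-1}\mathcal{A})^k$, the eigenvalues $\lambda$ of $\mathcal{D}^{-1}\mathcal{A}$ in the full cyclic orbit generated by this subspace satisfy $\lambda^k \in \{1, -\frac{1}{2p-1}\}$; unwinding the cyclic ($\mathbb{Z}/k$) symmetry shows the full spectrum is precisely $\{\omega_j : j\}$ together with $\{\omega_j / \sqrt[k]{2p-1} : j\}$ with appropriate multiplicities, and a dimension count ($k + (2p-1)k = 2pk = 2M$) confirms there are no others. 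Finally, the equivalence with the $\mathcal{L}$-spectrum is immediate from $\mathcal{L} = \id - \mathcal{D}^{-1}\mathcal{A}$.

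I expect the main obstacle to be the bookkeeping that makes the "cyclic orbit" argument rigorous: one must verify that the invariant subspace structure induced by the $\mathbb{Z}/k$ rotation and the petal-permutation symmetry genuinely decomposes all of $\mathbb{C}^{2M}$ (no stray eigenvectors hiding in, e.g., the antisymmetric-type directions discussed in Section~\ref{section:efunctions}), and that the geometric multiplicities add up correctly so that the listed eigenvalues, with multiplicity, exhaust the characteristic polynomial of degree $2M$. Handling the "backtracking forbidden at the center" constraint carefully — i.e., that the return map is $J - I$ and not $J$ — is the one place a sign or index slip would change the answer, so that computation is the crux.
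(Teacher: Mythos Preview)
Your ``cleaner route'' contains a concrete error at exactly the point you flagged as the crux. Starting from the edge $[c,v^{(i)}_1]$ and walking forward $k-1$ steps around petal $i$, you arrive at $[v^{(i)}_{k-1},c]$; the non-backtracking constraint at the $k$-th step then forbids $[c,v^{(i)}_{k-1}]$, which is the \emph{opposite} orientation into the same petal, not the starting edge $[c,v^{(i)}_1]$. In particular, you \emph{can} re-enter the petal you just traversed, in the same direction. Hence the restricted $k$-step operator on the $2p$ outgoing edges is $\tfrac{1}{2p-1}(J-Q)$, where $Q$ is the fixed-point-free involution swapping the two orientations within each petal, rather than $\tfrac{1}{2p-1}(J-I)$. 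Its eigenvalues are $1$ (on $\mathbf{1}$), $+\tfrac{1}{2p-1}$ (on the $p$-dimensional $(-1)$-eigenspace of $Q$), and $-\tfrac{1}{2p-1}$ (on the $(p{-}1)$-dimensional intersection of the $(+1)$-eigenspace of $Q$ with $\mathbf{1}^\perp$). So the assertion $\lambda^k\in\{1,-\tfrac{1}{2p-1}\}$ is wrong, and with it the dimension count $k+(2p-1)k=2pk$ that you rely on; you would have to redo the unwinding of the $\mathbb{Z}/k$ symmetry with these corrected data before any comparison with the stated list is possible.

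For comparison, the paper does not pass to $(\mathcal{D}^{-1}\mathcal{A})^k$ at all. It first writes down explicit eigenfunctions, supported on a single petal, for each $\omega_i/\sqrt[k]{2p-1}$, and then, to exclude further eigenvalues, uses Proposition~\ref{prop:0} (the entries of any eigenfunction for a nonzero eigenvalue sum to zero) together with the deterministic propagation along each petal to reduce an arbitrary eigenpair $(\mu,g)$ to relations among the $2p$ values $g(e_j)$ on the edges leaving the center, from which $|\mu^k|=\tfrac{1}{2p-1}$ is extracted.
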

\begin{proof}
By Theorem \ref{thm:partite}, $\omega_1,\ldots,\omega_k$ are in the spectrum of $\mathcal{D}^{-1}\mathcal{A}$. Now, let $x\in V$ be the central node of $G$, and let $C=\{x,v_2,\ldots,v_k\}$ be the vertex set of one of the petals. Let also $v_1:=v_{k+1}:=x$, let
\begin{equation*}
    \lambda:=\frac{\omega_i}{\sqrt[k]{2p-1}},
\end{equation*}for some $i\in[k]$, and let $f:\mathcal{V}\rightarrow \mathbb{C}$ be defined by
\begin{equation*}
    f([v,w]):=\begin{cases}
    \lambda^j, &\text{ if }[v,w]=[v_j,v_{j+1}],\\
     -\lambda^{k+1-j}, &\text{ if }[v,w]=[v_{j-1},v_j],\\
     0, &\text{ otherwise.}
    \end{cases}
\end{equation*}Then, $f$ is an eigenfunction for $\lambda$. This proves that
\begin{equation*}
    \omega_1,\ldots,\omega_k,\frac{\omega_1}{\sqrt[k]{2p-1}},\ldots,\frac{\omega_k}{\sqrt[k]{2p-1}}
\end{equation*}are eigenvalues of $\mathcal{D}^{-1}\mathcal{A}$. We want to show that there are no other eigenvalues.\newline 

Let $(\mu,g)$ be an eigenpair of $\mathcal{D}^{-1}\mathcal{A}$, and let again $C=\{x,v_2,\ldots,v_k\}$ be the vertex set of one of the petals. Observe that
\begin{align*}
    &g([v_2,v_3])=\mu\cdot g([x,v_2]),\\ &\ldots\\ &g([v_k,v_{k+1}])=\mu^{k-1}\cdot g([x,v_2])
\end{align*}and
\begin{align*}
&g([v_k,v_{k-1}])=\mu\cdot g([x,v_k]),\\ &\ldots\\ &g([v_2,v_1])=\mu^{k-1}\cdot g([x,v_k]).
\end{align*}Therefore, it is enough to know the values of $g$ on the elements that have input $x$, in order to know all its values. Now, let $e_1,\ldots,e_{2p}\in\mathcal{V}$ be the elements of $\mathcal{V}$ whose input is $x$, and write $e_i=e_j^\top$ if $i\neq j$ and $e_i$, $e_j$ belong to the same petal. By Proposition \ref{prop:0} and by the above observations,
\begin{align*}
   0&=\sum_{[v,w]\in\mathcal{V}} g([v,w])=\sum_{i=1}^{2p}\left(g(e_i)+\mu \cdot g(e_i)+\ldots+\mu^{k-1} \cdot g(e_i)\right)\\
   &=\left(1+\mu+\ldots+\mu^{k-1}\right)\cdot\left(\sum_{i=1}^{2p} g(e_i)\right).
\end{align*}If $\mu\notin \{\omega_1,\ldots,\omega_k\}$, then $1+\mu+\ldots+\mu^{k-1}\neq 0$, implying that $\sum_{i=1}^{2p} g(e_i)=0$. Therefore, given $j\in [2p]$,
\begin{equation*}
    g(e_j)=\sum_{i\neq j} g(e_i).
\end{equation*}Now, since $(\mu,g)$ is an eigenpair and $\deg x=2p-1$, we have that
\begin{equation*}
   \mu\cdot g(e_j^{-1})=\mu\cdot \mu^{k-1}\cdot g(e_j^\top)=\frac{1}{2p-1}\cdot \left(\sum_{i\neq j}g(e_i)\right)=\frac{1}{2p-1}\cdot g(e_j).
\end{equation*}Hence,
\begin{equation*}
    \mu^k\cdot g(e_j^\top)=\frac{1}{2p-1}\cdot g(e_j)
\end{equation*}and similarly
\begin{equation*}
   g(e_j)=\frac{1}{ \mu^k}\cdot \frac{1}{2p-1}\cdot g(e_j^\top),
\end{equation*}implying that
\begin{equation*}
    \mu^k\cdot g(e_j^\top)=\frac{1}{2p-1}\cdot \frac{1}{ \mu^k}\cdot \frac{1}{2p-1}\cdot g(e_j^\top).
\end{equation*}For $g(e_j^\top)\neq 0$, which always exists, this implies that $|\mu^{k}|=\left|\frac{1}{2p-1}\right|$, and therefore
\begin{equation*}
    \mu\in\left\{\frac{\omega_1}{\sqrt[k]{2p-1}},\ldots,\frac{\omega_k}{\sqrt[k]{2p-1}}\right\}.
\end{equation*}This proves the claim.
\end{proof}

\begin{corollary}\label{corollary:sharp}
The first bound in Theorem \ref{thm:epsilon} is sharp.
\end{corollary}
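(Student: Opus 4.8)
The plan is to exhibit an explicit family of circularly $k$--partite graphs for which the first inequality in Theorem \ref{thm:epsilon} is attained with equality, and the natural candidates are precisely the petal graphs analyzed in Theorem \ref{thm:petal}. So first I would take $G$ to be the petal graph on $N$ nodes with $p\geq 2$ petals each of length $k\geq 3$, which we already observed is circularly $k$--partite. The degrees are completely explicit: the central vertex $x$ has $\deg x = 2p$ in the graph sense coming from $2p-1$... more precisely $x$ has degree $2p$ only if petals share... no: each petal contributes $2$ edges at $x$, so $\deg x = 2p$, wait — in the petal graph the central vertex lies on each of the $p$ cycles of length $k$, contributing $2$ neighbors per petal, hence $\deg x = 2p$. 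Hmm, but Theorem \ref{thm:petal} uses $\deg x = 2p-1$; I will simply read off whichever value the paper uses consistently and plug it in. Every other vertex has degree $2$, so $\deg v - 1 = 1$ and those factors drop out of the product $\prod_{v\in V}(\deg v-1)^{\deg v-1}$.

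Next I would compute the right-hand side of the bound for this graph. Since only the central vertex contributes, $\prod_{v\in V}(\deg v-1)^{\deg v-1} = (\deg x - 1)^{\deg x - 1}$, and the number of edges is $M = pk$, so $2M - k = k(2p-1)$. Thus the bound reads
\begin{equation*}
    \varepsilon \leq \frac{1}{\sqrt[k(2p-1)]{(\deg x-1)^{\deg x-1}}} = \frac{1}{\sqrt[k]{\,\deg x - 1\,}},
\end{equation*}
using that $(\deg x - 1)^{(\deg x-1)/(k(2p-1))} = (\deg x - 1)^{1/k}$ exactly when $\deg x - 1 = 2p-1$. Then I would invoke Theorem \ref{thm:petal}, which lists every eigenvalue of $\mathcal{D}^{-1}\mathcal{A}$: they are the $k$-th roots of unity $\omega_1,\dots,\omega_k$ (all of modulus $1$) together with $\omega_1/\sqrt[k]{2p-1},\dots,\omega_k/\sqrt[k]{2p-1}$ (all of modulus $1/\sqrt[k]{2p-1}$). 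Hence $\varepsilon = r(\mathcal{D}^{-1}\mathcal{A}) = 1/\sqrt[k]{2p-1}$, which coincides exactly with the upper bound. Therefore equality holds and the bound is sharp.

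The only genuine subtlety, and the step I would be most careful about, is the bookkeeping of the exponent: one must check that $\deg x - 1$ equals $2p-1$ (so that the root in Theorem \ref{thm:epsilon} and the root in Theorem \ref{thm:petal} agree) and that $2M - k = k(2p-1)$, so that the $k(2p-1)$-th root of $(2p-1)^{2p-1}$ simplifies cleanly to $\sqrt[k]{2p-1}$. Everything else is a direct substitution. I would write the proof as: ``Let $G$ be the petal graph with $p\geq 2$ petals of length $k\geq 3$. By Theorem \ref{thm:petal}, $\varepsilon = 1/\sqrt[k]{2p-1}$. On the other hand, since all non-central vertices have degree $2$ and the central vertex has degree $2p-1$ (with $2M = 2pk$... adjust as needed), the right-hand side of the first bound in Theorem \ref{thm:epsilon} equals $1/\sqrt[k]{2p-1}$ as well. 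Hence equality holds.'' I expect no real obstacle beyond making the arithmetic of degrees and edge-counts match the conventions fixed in Theorem \ref{thm:petal}.
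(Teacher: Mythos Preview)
Your approach is exactly the paper's: exhibit the petal graph with $p\geq 2$ petals of length $k\geq 3$, compute both sides, and match them via Theorem \ref{thm:petal}. For the bookkeeping you flagged, the central vertex has $\deg_G x = 2p$ (two edges per petal), so $\deg x - 1 = 2p-1$, and $M = pk$ gives $2M-k = k(2p-1)$; the paper's line ``$\deg x = 2p-1$'' in the proof of Theorem \ref{thm:petal} is really referring to the non-backtracking degree $\deg_{\mathcal G}=\deg_G x - 1$, so your arithmetic is correct as stated.
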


\begin{proof}
Let $N\geq 5$, $p\geq 2$ and $k\geq 3$. Let $G=(V,E)$ be the petal graph on $N$ nodes, consisting of $p$ petals of length $k$. Then, $\delta=2$, $\Delta=2p$ and $2M=pk$. In this case, Theorem \ref{thm:epsilon} tells us that
\begin{equation*}
    \varepsilon \leq \frac{1}{\sqrt[2M-k]{\prod_{v\in V}(\deg v -1)^{\deg v-1}}}=\frac{1}{\sqrt[2pk-k]{(2p -1)^{2p-1}}}=\frac{1}{\sqrt[k]{(2p -1)}},
\end{equation*}and by Theorem \ref{thm:petal} we know that equality holds.
\end{proof}

\section{Singular values and independence numbers}\label{section:singular}

The positive semidefinite matrix 
$\mathcal{L}^\top\mathcal{L}$ has $2M$ real, non-negative eigenvalues that we denote by
$$0=\lambda_1(\mathcal{L}^\top\mathcal{L})\le \lambda_2(\mathcal{L}^\top\mathcal{L})\le\ldots\le \lambda_{2M}(\mathcal{L}^\top\mathcal{L}).$$
Their square roots are the \emph{singular values} of $\mathcal{L}$, and we denote them by 
$$0=s_1(\mathcal{L})\le s_2(\mathcal{L})\le\ldots\le s_{2M}(\mathcal{L}).$$
%In Theorem \ref{thm:bound-2} below, we show that the modulus of every non-zero eigenvalue of $\mathcal{L}$ is bounded below by $s_2(\mathcal{L})$. 
In this section, we study the singular values of $\mathcal{L}$ in relation to independence numbers of $\mathcal{G}$, that we introduce here. Such numbers are inspired by the classical definition of independence number for simple graphs, and so are the results in this section. Notably, the classical spectral bound for the independence number, namely, the inertia bound, can be used to give a spectrum-based proof for the famous Erdős-Ko-Rado Theorem \cite{Godsil16}, and its extended version to signed graphs actually gives an excellent proof for the Sensitivity Conjecture \cite{Huang19}. This %is the 
relation between eigenvalues and  independence numbers is very important in spectral graph theory. 
%\textcolor{orange}{[Is there a reference for all of these definitions? Or is this the first time they have been introduced, and they are simply inspired by the classical notion of independence number?]} {\color{purple} The \emph{order-adjacency} is just the usual adjacency on a directed graph, i.e., two vertices in $\mathcal{V}$ are adjacent if there is an edge from one vertex to another. But I am not sure if these independence numbers already appear in the references. The \emph{out-independence} and \emph{in-independence} are inspired by the concept of matching in directed graphs \cite{Liu2011}. It can also be said that they  are simply inspired by the classical notion of independence number. 

\begin{definition}
    Two distinct vertices $[v_1,w_1]$ and $[v_2,w_2]$ in $\mathcal{V}$ are \emph{in-adjacent} (respectively, \emph{out-adjacent}) if they have the same output (respectively, the same input), i.e., $w_1=w_2$ (respectively, $v_1=v_2$). They are \emph{adjacent} if either $[v_1,w_1]\rightarrow [v_2,w_2]$ or $[v_1,w_1]\rightarrow [v_2,w_2]$ in $\mathcal{E}$.\newline
A set $S\subset \mathcal{V}$ is \emph{out-independent} if any two vertices in $S$ are not out-adjacent. The \emph{out-independence number} of $G$, denoted $\alpha_{out}(\mathcal{G})$, is the cardinality of the largest independent set. \newline
Similarly, a set $S\subset \mathcal{V}$ is \emph{strong out-independent} if any two vertices in $S$ are neither out-adjacent, nor adjacent. The \emph{strong out-independence number} of $G$, denoted $\alpha_{s\text{-}out}(\mathcal{G})$, is the cardinality of the largest strong out-independent set. 
\end{definition}

    Clearly, $\alpha_{s\text{-}out}(\mathcal{G})\le \alpha_{out}(\mathcal{G})$.\newline

    The following theorem is analogous to the inertia bound \cite{Cvetkovic1971,Abiad2021} for the strong out-independence number:

\begin{theorem}\label{thm:inertia}
Let $G=(V,E)$ be a simple graph with minimum degree $\ge2$, and let $\mathcal{G}=(\mathcal{V},\mathcal{E})$ be its non-backtracking graph.   Then,

\begin{equation*}
    \alpha_{s\text{-}out}(\mathcal{G})\le\#\biggl\{i:s_i(\mathcal{L})\le\sqrt{\frac{\delta}{\delta-1}}  \biggr\}
\end{equation*}and
\begin{equation*}
    \alpha_{s\text{-}out}(\mathcal{G})\le\#\biggl\{i:s_i(\mathcal{L})\ge\sqrt{\frac{\Delta}{\Delta-1}}  \biggr\}.
\end{equation*}
\end{theorem}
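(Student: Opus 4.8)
The natural route is to mimic the classical inertia bound (Cvetković's bound) for the independence number, but applied to an appropriate \emph{symmetric} matrix built from $\mathcal{L}$. The key observation is that the inertia bound requires a symmetric matrix, while $\mathcal{L}$ is not symmetric; however the matrix $\mathcal{L}^\top\mathcal{L}$ is symmetric positive semidefinite, and its eigenvalues are exactly the $s_i(\mathcal{L})^2$. So I would work with $\mathcal{L}^\top\mathcal{L}$ and, more precisely, with shifted versions $\mathcal{L}^\top\mathcal{L} - t\,\id$ for the two threshold values $t = \frac{\delta}{\delta-1}$ and $t = \frac{\Delta}{\Delta-1}$. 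The strategy is: find a principal submatrix of $\mathcal{L}^\top\mathcal{L}$ indexed by a strong out-independent set $S$ on which $\mathcal{L}^\top\mathcal{L}$ restricts to something we can control, then invoke eigenvalue interlacing (Cauchy interlacing for principal submatrices of symmetric matrices).

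\textbf{Key steps.} First, I would compute the entries of $M \coloneqq \mathcal{L}^\top\mathcal{L}$ explicitly in terms of the non-backtracking graph. Since $\mathcal{L} = \id - \mathcal{D}^{-1}\mathcal{A}$, we get $M = \id - \mathcal{D}^{-1}\mathcal{A} - \mathcal{A}^\top\mathcal{D}^{-1} + \mathcal{A}^\top\mathcal{D}^{-2}\mathcal{A}$. The crucial point is to understand the $([v_1,w_1],[v_2,w_2])$ entry of $\mathcal{A}^\top\mathcal{D}^{-2}\mathcal{A}$: this counts (weighted by $1/(\deg z - 1)^2$) the common out-neighbors $[w,z]$ of the two edges, i.e.\ it is nonzero precisely when $[v_1,w_1]$ and $[v_2,w_2]$ are in-adjacent ($w_1 = w_2 =: w$), in which case it contributes, and the diagonal entry for $[v,w]$ is $\frac{\deg w - 1}{(\deg w - 1)^2} = \frac{1}{\deg w - 1}$. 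Meanwhile $\mathcal{D}^{-1}\mathcal{A} + \mathcal{A}^\top\mathcal{D}^{-1}$ is supported on pairs that are adjacent in $\mathcal{E}$. Putting this together: if $S$ is \emph{strong out-independent}, then for distinct $[v_1,w_1],[v_2,w_2]\in S$ we have that they are neither out-adjacent nor adjacent; if additionally they were in-adjacent then $w_1=w_2$, and one checks that strong out-independence still permits in-adjacency, so the off-diagonal entries of the principal submatrix $M[S]$ come only from the $\mathcal{A}^\top\mathcal{D}^{-2}\mathcal{A}$ term. Actually the cleanest line is: the diagonal of $M[S]$ is $1 + \frac{1}{\deg w - 1} = \frac{\deg w}{\deg w - 1}$ at $[v,w]$, which lies in $\left[\frac{\Delta}{\Delta-1}, \frac{\delta}{\delta-1}\right]$ since $t \mapsto \frac{t}{t-1}$ is decreasing, and the off-diagonal structure needs to be handled via an interlacing/eigenvalue-counting argument rather than a quadratic-form sign argument.

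\textbf{The interlacing argument.} For the first inequality, consider $N \coloneqq \frac{\delta}{\delta-1}\,\id - M$, a symmetric matrix whose eigenvalues are $\frac{\delta}{\delta-1} - s_i(\mathcal{L})^2$; the number of its nonnegative eigenvalues is $\#\{i : s_i(\mathcal{L})^2 \le \frac{\delta}{\delta-1}\} = \#\{i : s_i(\mathcal{L}) \le \sqrt{\delta/(\delta-1)}\}$. By Cauchy interlacing, the principal submatrix $N[S]$ has at least $|S| - (2M - \#\{\text{nonneg eigenvalues of } N\})$ nonnegative eigenvalues — so it suffices to show $N[S]$ is positive \emph{definite} (or at least has no negative eigenvalues), which would force $|S| \le \#\{i : s_i(\mathcal{L}) \le \sqrt{\delta/(\delta-1)}\}$. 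I would prove $N[S] \succ 0$ by showing $N[S]$ is (weakly) diagonally dominant with positive diagonal: the diagonal entry is $\frac{\delta}{\delta-1} - \frac{\deg w}{\deg w - 1} \ge 0$, and one bounds the sum of absolute values of off-diagonal entries in each row using that $M[S]$'s off-diagonal support lies in the in-adjacency relation, where the entries are $\sum_{z}\frac{1}{(\deg z - 1)^2}$-type quantities whose row sums telescope against the degree. The second inequality is dual: take $N' \coloneqq M - \frac{\Delta}{\Delta-1}\,\id$ and run the same diagonal-dominance/interlacing argument.

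\textbf{Main obstacle.} The delicate part is controlling the off-diagonal entries of $M[S]$ coming from $\mathcal{A}^\top\mathcal{D}^{-2}\mathcal{A}$ when $S$ contains in-adjacent pairs, and verifying that the resulting row sums are small enough for diagonal dominance (or, if diagonal dominance fails, finding the right Schur-complement or alternative certificate of definiteness). A strong out-independent set forbids out-adjacency and adjacency but \emph{not} in-adjacency, so $M[S]$ genuinely can have nonzero off-diagonal entries, and the whole proof hinges on the exact weighting $1/(\deg z-1)^2$ making the estimate tight against $\frac{1}{\deg w - 1} = M_{[v,w],[v,w]} - 1$. I expect this to require a careful case analysis of how two in-adjacent edges in $S$ share out-neighbors, together with the observation that for a \emph{simple} graph two distinct edges into $w$ cannot share an out-neighbor $z$ unless that creates a short cycle, which limits the overlap — this is where simplicity of $G$ and the precise definition of strong out-independence both get used.
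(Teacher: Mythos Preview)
Your overall strategy (analyze the symmetric matrix $\mathcal{L}^\top\mathcal{L}$, restrict to the principal submatrix indexed by a maximal strong out-independent set $S$, and invoke interlacing/Courant--Fischer) is exactly the paper's approach. However, your computation of the entries of $\mathcal{L}^\top\mathcal{L}$ is wrong, and this error creates the phantom ``main obstacle'' you describe.

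The term $\mathcal{A}^\top\mathcal{D}^{-2}\mathcal{A}$ does \emph{not} count common out-neighbors: $(\mathcal{A}^\top\mathcal{D}^{-2}\mathcal{A})_{e,e'}=\sum_{e''}\mathcal{A}_{e'',e}\,\mathcal{D}^{-2}_{e'',e''}\,\mathcal{A}_{e'',e'}$ sums over common \emph{in}-neighbors $e''$ of $e$ and $e'$. Two oriented edges share an in-neighbor precisely when they have the same input, i.e.\ when they are \emph{out}-adjacent ($v_1=v_2$), not in-adjacent. Consequently the diagonal entry of $\mathcal{L}^\top\mathcal{L}$ at $[v,w]$ is $1+\tfrac{1}{\deg v-1}=\tfrac{\deg v}{\deg v-1}$ (not $\tfrac{\deg w}{\deg w-1}$), and the off-diagonal support of $\mathcal{L}^\top\mathcal{L}$ is exactly on pairs that are either out-adjacent or adjacent in $\mathcal{E}$ --- there is \emph{no} contribution from in-adjacent pairs. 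Since a strong out-independent set forbids both out-adjacency and adjacency, the principal submatrix $(\mathcal{L}^\top\mathcal{L})[S]$ is \emph{diagonal}, with entries $\tfrac{\deg v}{\deg v-1}\in\bigl[\tfrac{\Delta}{\Delta-1},\tfrac{\delta}{\delta-1}\bigr]$. The two inequalities then follow immediately from min--max (or, in your language, your shifted matrices $N[S]$ and $N'[S]$ are trivially positive semidefinite because they are diagonal with nonnegative entries). Your diagonal-dominance discussion, the speculation about short cycles, and the appeal to simplicity of $G$ are all unnecessary.
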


\begin{proof}
Let $S\subset \mathcal{V}$ be a maximal strong out-independent set, and let  $$\mathcal{X}:=\mathrm{span}_{\R}\{\mathbf{1}_{[v,w]}:[v,w]\in S\}\subset \R^{2M},$$ 
where $\mathbf{1}_{[v,w]}\in \R^{2M}$ is the characteristic vector of $[v,w]$, therefore its entries are
\begin{equation*}
    (\mathbf{1}_{[v,w]})_j=\begin{cases}
        1, & \text{if }[w,j]=e_j\\
        0, & \text{otherwise.}
    \end{cases}
\end{equation*}

Then, $\dim_\R\mathcal{X}=|S|=\alpha_{s\text{-}out}(\mathcal{G})$, and we shall simply denote it by $\alpha$ in this  proof.\newline Now, note that the matrix $\mathcal{L}^\top\mathcal{L}$ satisfies
\[(\mathcal{L}^\top\mathcal{L})_{[v,w],[y,v]}=(\mathcal{L}^\top\mathcal{L})_{[y,v],[v,w]}=-\frac{1}{\deg v-1},\]

\[(\mathcal{L}^\top\mathcal{L})_{[v,w],[v,w]}
=\frac{\deg v}{\deg v-1},\]
and for any $[v,w]\ne[v,w']$ in $\mathcal{V}$,
\[(\mathcal{L}^\top\mathcal{L})_{[v,w],[v,w']}=(\mathcal{L}^\top\mathcal{L})_{[v,w'],[v,w]}
=\frac{\deg v-2}{(\deg v-1)^2}.\]
All other entries of  $\mathcal{L}^\top\mathcal{L}$ are zero. Therefore, $(\mathcal{L}^\top\mathcal{L})_{[v,w],[y,z]}$ is non-zero if and only if one of the following three conditions is satisfied:
\begin{itemize}
    \item $[v,w]$ and $[y,z]$ are out-adjacent,
    \item $[v,w]$ and $[y,z]$ are adjacent, or
    \item $[v,w]=[y,z]$.
\end{itemize}
  Since $S$ is a strong out-independent set, one can easily check that, 
for any $f\in\mathcal{X}\setminus\{\mathbf{0}\}$,
\[\frac{\langle \mathcal{L}^\top\mathcal{L}f,f\rangle}{\langle f,f\rangle }=\frac{\sum_{[v,w]\in S}\frac{\deg v}{\deg v-1}\cdot f([v,w])^2}{\sum_{[v,w]\in S}f([v,w])^2},
\]
which implies that
\[\frac{\Delta}{\Delta-1}\le\min_{[v,w]\in S}\,\frac{\deg v}{\deg v-1}\le\frac{\langle \mathcal{L}^\top\mathcal{L}f,f\rangle}{\langle f,f\rangle }\le \max_{[v,w]\in S}\,\frac{\deg v}{\deg v-1}\le \frac{\delta}{\delta-1}.\]
Since the eigenvalues of the positive semi-definite matrix $\mathcal{L}^\top\mathcal{L}$ are given by \[0=s_1^2(\mathcal{L})\le s_2^2(\mathcal{L})\le\ldots\le s_{2M}^2(\mathcal{L}),\]
we can apply the min-max principle to derive 
\[s_\alpha^2(\mathcal{L})\le\max_{f\in \mathcal{X}\setminus\{\mathbf{0}\}}\frac{\langle \mathcal{L}^\top\mathcal{L}f,f\rangle}{\langle f,f\rangle }\le  \frac{\delta}{\delta-1}\]
and  
\[s_{2M-\alpha+1}^2(\mathcal{L})\ge\min_{f\in \mathcal{X}\setminus\{\mathbf{0}\}}\frac{\langle \mathcal{L}^\top\mathcal{L}f,f\rangle}{\langle f,f\rangle }\ge  \frac{\Delta}{\Delta-1}.\]
This proves the claim.
\end{proof}

As a generalization of Theorem \ref{thm:inertia}, we have: 

\begin{theorem}
Let $G=(V,E)$ be a simple graph with minimum degree $\ge2$, and let $\mathcal{G}=(\mathcal{V},\mathcal{E})$ be its non-backtracking graph.   Then, for any $a\in\R$, 
%\begin{align*} \alpha_{s\text{-}out}(\mathcal{G})\le \min\Biggl\{&\#\biggl\{i:s_i(a\,\mathrm{Id}-\mathcal{L})\le\sqrt{(a-1)^2+\frac{1}{\delta-1}}\biggr\},\\ &\#\biggl\{i:s_i(a\,\mathrm{Id}-\mathcal{L})\ge\sqrt{(a-1)^2+\frac{1}{\Delta-1}}\biggr\}\Biggr\}\end{align*}
\begin{equation*}
    \alpha_{s\text{-}out}(\mathcal{G})\le\#\biggl\{i:s_i(a\,\mathrm{Id}-\mathcal{L})\le\sqrt{(a-1)^2+\frac{1}{\delta-1}}\biggr\}
\end{equation*}and
\begin{equation*}
    \alpha_{s\text{-}out}(\mathcal{G})\le\#\biggl\{i:s_i(a\,\mathrm{Id}-\mathcal{L})\ge\sqrt{(a-1)^2+\frac{1}{\Delta-1}}\biggr\}.
\end{equation*}
\end{theorem}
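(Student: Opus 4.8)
The plan is to mimic the proof of Theorem~\ref{thm:inertia} almost verbatim, replacing the matrix $\mathcal{L}^\top\mathcal{L}$ by $(a\,\mathrm{Id}-\mathcal{L})^\top(a\,\mathrm{Id}-\mathcal{L})$ and tracking how the relevant entries change. First I would expand
\[
(a\,\mathrm{Id}-\mathcal{L})^\top(a\,\mathrm{Id}-\mathcal{L})=a^2\,\mathrm{Id}-a(\mathcal{L}+\mathcal{L}^\top)+\mathcal{L}^\top\mathcal{L},
\]
and observe that the support (the set of index pairs with nonzero entry) of this matrix is contained in the support of $\mathcal{L}^\top\mathcal{L}$ together with that of $\mathcal{L}+\mathcal{L}^\top$; both of these are already known from the proof of Theorem~\ref{thm:inertia} to consist only of pairs that are equal, out-adjacent, or adjacent (indeed $\mathcal{L}$ itself is supported on adjacent pairs and the diagonal). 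Hence for a strong out-independent set $S$ and the space $\mathcal{X}=\mathrm{span}_{\R}\{\mathbf{1}_{[v,w]}:[v,w]\in S\}$, all off-diagonal contributions vanish on $\mathcal{X}$, so for $f\in\mathcal{X}\setminus\{\mathbf{0}\}$ the Rayleigh quotient collapses to a diagonal average:
\[
\frac{\langle (a\,\mathrm{Id}-\mathcal{L})^\top(a\,\mathrm{Id}-\mathcal{L})f,f\rangle}{\langle f,f\rangle}
=\frac{\sum_{[v,w]\in S}\bigl((a\,\mathrm{Id}-\mathcal{L})^\top(a\,\mathrm{Id}-\mathcal{L})\bigr)_{[v,w],[v,w]}\,f([v,w])^2}{\sum_{[v,w]\in S}f([v,w])^2}.
\]

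Next I would compute the diagonal entry. Using $(\mathcal{L}^\top\mathcal{L})_{[v,w],[v,w]}=\deg v/(\deg v-1)$, $\mathcal{L}_{[v,w],[v,w]}=1$, and $(\mathcal{L}^\top)_{[v,w],[v,w]}=1$, we get
\[
\bigl((a\,\mathrm{Id}-\mathcal{L})^\top(a\,\mathrm{Id}-\mathcal{L})\bigr)_{[v,w],[v,w]}=a^2-2a+\frac{\deg v}{\deg v-1}=(a-1)^2+\frac{1}{\deg v-1}.
\]
Since $\delta\le\deg v\le\Delta$, this diagonal entry lies in $\bigl[(a-1)^2+\tfrac{1}{\Delta-1},\,(a-1)^2+\tfrac{1}{\delta-1}\bigr]$, so the Rayleigh quotient is squeezed into the same interval for every $f\in\mathcal{X}\setminus\{\mathbf{0}\}$. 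Finally I would invoke the Courant--Fischer min-max principle exactly as in Theorem~\ref{thm:inertia}: since $\dim_\R\mathcal{X}=\alpha_{s\text{-}out}(\mathcal{G})=:\alpha$ and the eigenvalues of the positive semidefinite matrix $(a\,\mathrm{Id}-\mathcal{L})^\top(a\,\mathrm{Id}-\mathcal{L})$ are $s_1(a\,\mathrm{Id}-\mathcal{L})^2\le\cdots\le s_{2M}(a\,\mathrm{Id}-\mathcal{L})^2$, the existence of an $\alpha$-dimensional subspace on which the Rayleigh quotient is $\le(a-1)^2+\tfrac{1}{\delta-1}$ forces $s_\alpha(a\,\mathrm{Id}-\mathcal{L})^2\le(a-1)^2+\tfrac{1}{\delta-1}$, hence $\alpha\le\#\{i:s_i(a\,\mathrm{Id}-\mathcal{L})\le\sqrt{(a-1)^2+1/(\delta-1)}\}$; symmetrically, the Rayleigh quotient being $\ge(a-1)^2+\tfrac{1}{\Delta-1}$ on the same subspace gives $s_{2M-\alpha+1}(a\,\mathrm{Id}-\mathcal{L})^2\ge(a-1)^2+\tfrac{1}{\Delta-1}$, which is the second inequality.

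The only genuine point requiring care — the would-be main obstacle, though it is mild — is verifying that the cross term $-a(\mathcal{L}+\mathcal{L}^\top)$ introduces no new nonzero entries outside the "equal / out-adjacent / adjacent" pattern, and in particular contributes nothing on the diagonal beyond the $-2a$ already accounted for, and nothing at all between two distinct vertices of $S$. Here one uses that $\mathcal{L}_{[v,w],[y,z]}\ne 0$ for $[v,w]\ne[y,z]$ exactly when $[v,w]\to[y,z]$, i.e. the two are adjacent, and that $S$ contains no two adjacent (nor out-adjacent) vertices by definition of strong out-independence; so the entire off-diagonal block of $(a\,\mathrm{Id}-\mathcal{L})^\top(a\,\mathrm{Id}-\mathcal{L})$ restricted to $S\times S$ vanishes, exactly as in Theorem~\ref{thm:inertia}. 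Taking $a=0$ (equivalently $a=1$ after noting $\mathrm{Id}-\mathcal{L}=\mathcal{D}^{-1}\mathcal{A}$ shifts things, but $a=0$ is cleanest) recovers Theorem~\ref{thm:inertia}, confirming this is the right generalization.
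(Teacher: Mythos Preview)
Your proposal is correct and is exactly the argument the paper intends: the paper's own proof reads in its entirety ``Analogous to that of Theorem~\ref{thm:inertia},'' and you have carried out precisely that analogy, correctly computing the diagonal entry $(a-1)^2+\frac{1}{\deg v-1}$ and verifying that the cross term $-a(\mathcal{L}+\mathcal{L}^\top)$ contributes only on adjacent pairs and the diagonal, so that the Rayleigh-quotient and min-max steps go through unchanged.
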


\begin{proof}
    Analogous to that of Theorem \ref{thm:inertia}.
\end{proof}

For $a=1$, the above theorem can be proved also for $\alpha_{out}(\mathcal{G})$.

\begin{theorem}\label{thm:out-indepen}
Let $G=(V,E)$ be a simple graph with minimum degree $\ge2$, and let $\mathcal{G}=(\mathcal{V},\mathcal{E})$ be its non-backtracking graph.   Then, 
\begin{equation*}
    \alpha_{out}(\mathcal{G})\le\#\biggl\{i:s_i(a\,\mathrm{Id}-\mathcal{L})\le\sqrt{\frac{1}{\delta-1}}\biggr\}
\end{equation*}and
\begin{equation*}
    \alpha_{out}(\mathcal{G})\le\#\biggl\{i:s_i(a\,\mathrm{Id}-\mathcal{L})\ge\sqrt{\frac{1}{\Delta-1}}\biggr\}.
\end{equation*}
\end{theorem}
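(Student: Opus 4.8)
The plan is to mimic the proof of Theorem~\ref{thm:inertia}, but with a test space built from an \emph{out-independent} set rather than a \emph{strong} out-independent one, and to exploit the fact that at $a=1$ the matrix $\mathrm{Id}-\mathcal{L}=\mathcal{D}^{-1}\mathcal{A}$ has a particularly clean structure: its rows are supported exactly on the out-neighbours of each vertex, so $(\mathrm{Id}-\mathcal{L})^\top(\mathrm{Id}-\mathcal{L})$ has a sparser pattern than $\mathcal{L}^\top\mathcal{L}$. Concretely, let $M_1:=\mathrm{Id}-\mathcal{L}=\mathcal{D}^{-1}\mathcal{A}$ and note $M_1^\top M_1=\mathcal{A}^\top\mathcal{D}^{-2}\mathcal{A}$. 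I would first compute the entries of this matrix: the diagonal entry at $[v,w]$ is $\sum_{[y,v]}\frac{1}{(\deg v-1)^2}$ summed over the in-neighbours, which equals $\frac{\deg v-1}{(\deg v-1)^2}=\frac{1}{\deg v-1}$ when $[v,w]$ itself does not backtrack into its own source — more carefully, the in-neighbours $[y,v]$ of $[v,w]$ with $y\neq w$ number exactly $\deg v-1$, giving diagonal $\tfrac{1}{\deg v-1}$. The off-diagonal entry $(M_1^\top M_1)_{[v,w],[v',w']}$ is nonzero only when $[v,w]$ and $[v',w']$ share a common \emph{in}-neighbour $[y,z]$, i.e. $z=v=v'$ — that is, precisely when $[v,w]$ and $[v',w']$ are \emph{out-adjacent} — or when one points to the other. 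The upshot: $(M_1^\top M_1)_{[v,w],[y,z]}\neq 0$ iff the two vertices are out-adjacent, or adjacent, or equal.

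\textbf{Key steps.} Having that entry pattern, I would set $S\subset\mathcal{V}$ to be a maximum \emph{out-independent} set, $\mathcal{X}:=\mathrm{span}_\R\{\mathbf{1}_{[v,w]}:[v,w]\in S\}$, $\alpha:=|S|=\alpha_{out}(\mathcal{G})$. The crucial point is that out-independence of $S$ kills the out-adjacency off-diagonal terms, but it does \emph{not} a priori kill the adjacency terms — however, here is where $a=1$ pays off: I claim that for vertices in a common row/column contribution the adjacency terms of $M_1^\top M_1$ are themselves supported on out-adjacent pairs. Indeed if $[v,w]\to[w,z]$ then the $([v,w],[w,z])$ entry of $M_1^\top M_1$ is $\sum_{[y,u]} (M_1)_{[y,u],[v,w]}(M_1)_{[y,u],[w,z]}$, which forces $u=v$ and $u=w$ simultaneously, impossible; so in fact the only nonzero off-diagonal entries are between out-adjacent pairs. (This is the structural simplification that fails for general $a$, hence the hypothesis $a=1$ — though the statement as written has a stray ``$a$''; I will read it as $a=1$, i.e. $\mathrm{Id}-\mathcal{L}$, matching ``For $a=1$'' in the preceding sentence.) Consequently, for $f\in\mathcal{X}\setminus\{\mathbf 0\}$,
\begin{equation*}
\frac{\langle M_1^\top M_1 f,f\rangle}{\langle f,f\rangle}=\frac{\sum_{[v,w]\in S}\tfrac{1}{\deg v-1}f([v,w])^2}{\sum_{[v,w]\in S}f([v,w])^2},
\end{equation*}
which lies between $\tfrac{1}{\Delta-1}$ and $\tfrac{1}{\delta-1}$. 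The eigenvalues of $M_1^\top M_1$ are $s_i^2(\mathrm{Id}-\mathcal{L})$, so the Courant–Fischer min–max principle gives $s_\alpha^2(\mathrm{Id}-\mathcal{L})\le\max_{f\in\mathcal{X}\setminus\{\mathbf 0\}}\tfrac{\langle M_1^\top M_1 f,f\rangle}{\langle f,f\rangle}\le\tfrac{1}{\delta-1}$ and $s_{2M-\alpha+1}^2(\mathrm{Id}-\mathcal{L})\ge\tfrac{1}{\Delta-1}$, which is exactly the pair of claimed inequalities after rearranging into counts of indices.

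\textbf{Main obstacle.} The delicate step is verifying the entry pattern of $M_1^\top M_1$ — in particular the clean claim that \emph{all} its off-diagonal nonzero entries occur between out-adjacent pairs (equivalently, that adjacency of $[v,w]$ and $[v',w']$ in $\mathcal{G}$ never by itself produces a nonzero entry, because a common in-neighbour would have to be simultaneously equal to two different heads). One must handle carefully the degenerate cases: a vertex $[v,w]$ with $\deg v$ small, short cycles, or the possibility $w=v'$ and $w'=v$ creating both a forward and backward relation; I would check that in each case the summation defining the $([v,w],[v',w'])$-entry is still governed solely by the common-source condition. Once that combinatorial bookkeeping is nailed down, the rest is the same min–max argument as Theorem~\ref{thm:inertia} with the interval $\bigl[\tfrac{1}{\Delta-1},\tfrac{1}{\delta-1}\bigr]$ replacing $\bigl[\tfrac{\Delta}{\Delta-1},\tfrac{\delta}{\delta-1}\bigr]$, so I would simply write ``analogous to the proof of Theorem~\ref{thm:inertia}'' after exhibiting the Rayleigh-quotient computation above.
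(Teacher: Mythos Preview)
Your proposal is correct and follows essentially the same approach as the paper: compute the entries of $(\mathrm{Id}-\mathcal{L})^\top(\mathrm{Id}-\mathcal{L})=(\mathcal{D}^{-1}\mathcal{A})^\top\mathcal{D}^{-1}\mathcal{A}$, observe that its only nonzero off-diagonal entries occur between out-adjacent pairs (the paper simply states this, while you argue it out by noting a common in-neighbour of $[v,w]$ and $[v',w']$ forces $v=v'$), and then rerun the min--max argument of Theorem~\ref{thm:inertia} with the Rayleigh quotient pinned between $\tfrac{1}{\Delta-1}$ and $\tfrac{1}{\delta-1}$. Your reading of the stray ``$a$'' as $a=1$ is also exactly what the paper intends.
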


\begin{proof}

The proof is a slight modification of the proof for  Theorem \ref{thm:inertia}. 
The only difference is that the matrix 
$(\mathrm{Id}-\mathcal{L}^\top)(\mathrm{Id}-\mathcal{L})=(\mathcal{D}^{-1}\mathcal{A})^\top\mathcal{D}^{-1}\mathcal{A}$ 
 satisfies
\[((\mathcal{D}^{-1}\mathcal{A})^\top\mathcal{D}^{-1}\mathcal{A})_{[v,w],[v,w]}
=\frac{1}{\deg v-1}\]
and, for any $[v,w]\ne[v,w']$ in $\mathcal{V}$,
\[((\mathcal{D}^{-1}\mathcal{A})^\top\mathcal{D}^{-1}\mathcal{A})_{[v,w],[v,w']}=(\mathcal{L}^\top\mathcal{L})_{[v,w'],[v,w]}
=\frac{\deg v-2}{(\deg v-1)^2},\]
while all other entries are zero. Hence, $((\mathcal{D}^{-1}\mathcal{A})^\top\mathcal{D}^{-1}\mathcal{A})_{[v,w],[y,z]}$ is non-zero if and only if either $[v,w]$ and $[y,z]$ are out-adjacent, 
 or $[v,w]=[y,z]$. We can therefore use the out-independence relation  instead of the strong out-independence relation, following the proof of Theorem \ref{thm:inertia}. 
\end{proof}

\section{Spectral gap from a real number}\label{section:gapR}

In this section, for convenience, we assume that the non-backtracking graph $\mathcal{G}=(\mathcal{V},\mathcal{E})$ of $G=(V,E)$ is connected. That is, we assume that no connected component of $G$ is a cycle graph. We prove some bounds on the moduli of the eigenvalues of $\mathcal{L}$, and we relate them to the singular values that we investigated in the previous section. Before, we prove a preliminary lemma.

\begin{definition}
    Given a set $X\subseteq \mathbb{C}^{2M}$, we let
    $$X^{\bot_P}:=\{f\in\mathbb{C}^{2M}:(f,g)_P=0,\,\forall g\in X\}.$$ 
\end{definition}

\begin{lemma}\label{lemma:a}
    Let $(\lambda,f)$ and $(\mu,g)$ be two eigenpairs of $\mathcal{L}$ such that $\bar\lambda\ne\mu$.  Then, $(f,g)_P=0$.
\end{lemma}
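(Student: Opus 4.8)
The plan is to exploit the $P$-self-adjointness of $\mathcal{L}$, recorded in the excerpt as the identity $\mathcal{L}^\top = P\mathcal{L}P$ (equivalently $(\mathcal{L}x, y)_P = (x, \mathcal{L}y)_P$), together with the fact that $P$ is a genuine (indefinite) bilinear/sesquilinear form satisfying $P^\top = P$, $P^2 = \mathrm{Id}$. First I would write out $(\mathcal{L}f, g)_P = \overline{\mathcal{L}f}^\top P g = \overline{f}^\top \mathcal{L}^\top P g = \overline{f}^\top P \mathcal{L} P P g = \overline{f}^\top P \mathcal{L} g = (f, \mathcal{L}g)_P$, so $\mathcal{L}$ is self-adjoint with respect to $(\cdot,\cdot)_P$. (One must be a little careful with the complex conjugation in the $P$-product: since $P$ is real, $\overline{P} = P$, so the manipulation above goes through verbatim.)

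Next I would apply this to the two given eigenpairs. On one hand $(\mathcal{L}f, g)_P = (\lambda f, g)_P = \overline{\lambda}\,(f,g)_P$, using that the $P$-product is conjugate-linear in its first argument. On the other hand $(f, \mathcal{L}g)_P = (f, \mu g)_P = \mu\,(f,g)_P$, using linearity in the second argument. Equating the two expressions gives $(\overline{\lambda} - \mu)(f,g)_P = 0$. Since the hypothesis is precisely $\overline{\lambda} \neq \mu$, we may divide and conclude $(f,g)_P = 0$.

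The only genuinely delicate point is bookkeeping around the conjugation convention in $(\mathbf{x},\mathbf{y})_P = \overline{\mathbf{x}}^\top P \mathbf{y}$: I must make sure that pulling the scalar $\lambda$ out of the first slot produces $\overline{\lambda}$ and not $\lambda$, and that $P$ being real is what lets me commute $P$ past the conjugation in the step $\overline{\mathcal{L}f}^\top = \overline{f}^\top \overline{\mathcal{L}}^\top = \overline{f}^\top \mathcal{L}^\top$ (here $\mathcal{L}$ is real, so $\overline{\mathcal{L}} = \mathcal{L}$). Everything else is a one-line computation; there is no real obstacle, since the hard structural input — the relation $\mathcal{L}^\top = P\mathcal{L}P$ — is quoted from Theorem 3.9 of \cite{NB-Laplacian} and reproved in the lemma immediately preceding this statement.

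Concretely, the write-up would read roughly: ``Since $\mathcal{L}$ and $P$ are real and $\mathcal{L}^\top = P\mathcal{L}P$, $P^\top = P$, $P^2 = \mathrm{Id}$, we have $(\mathcal{L}f,g)_P = \overline{f}^\top \mathcal{L}^\top P g = \overline{f}^\top P\mathcal{L}PPg = \overline{f}^\top P \mathcal{L}g = (f,\mathcal{L}g)_P$. Hence $\overline{\lambda}(f,g)_P = (\mathcal{L}f,g)_P = (f,\mathcal{L}g)_P = \mu(f,g)_P$, so $(\overline{\lambda}-\mu)(f,g)_P = 0$, and since $\overline{\lambda}\neq\mu$ we get $(f,g)_P = 0$.''
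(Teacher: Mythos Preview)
Your proof is correct and follows essentially the same approach as the paper: both use the $P$-self-adjointness of $\mathcal{L}$ to obtain $\overline{\lambda}(f,g)_P = (\mathcal{L}f,g)_P = (f,\mathcal{L}g)_P = \mu(f,g)_P$ and then divide by $\overline{\lambda}-\mu\neq 0$. The only difference is that you spell out the verification of $(\mathcal{L}f,g)_P=(f,\mathcal{L}g)_P$ from the identity $\mathcal{L}^\top=P\mathcal{L}P$, whereas the paper simply invokes Theorem~3.9 of \cite{NB-Laplacian} for this step.
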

\begin{proof}
    By Theorem 3.9 in \cite{NB-Laplacian}, $\mathcal{L}$ is self-adjoint with respect to the $P$-product. Together with the assumption that $(\lambda,f)$ and $(\mu,g)$ are eigenpairs, this implies that
\[\bar\lambda( f,g)_P=(\lambda f,g)_P=(\mathcal{L}f,g)_P=(f,\mathcal{L}g)_P=(f,\mu g)_P=\mu(f, g)_P.\]
This implies that $(\bar\lambda-\mu)(f, g)_P=0$, and therefore, since $\bar\lambda\ne\mu$, $(f,g)_P=0$.
\end{proof}

\begin{remark}
If we take $(\mu, g)=(\lambda, f)$ in Lemma \ref{lemma:a}, we obtain that $(f,f)_P=0$ when $\lambda$ is not  real. Hence, the lemma generalizes the third statement of Theorem 3.9 in \cite{NB-Laplacian}.
\end{remark}

Lemma \ref{lemma:a} allows us to prove the following
\begin{proposition}\label{prop:estimate-1}
Let $\lambda_1,\ldots,\lambda_k$ be $k< 2M$ eigenvalues of $\mathcal{L}$, and let $f_1,\ldots,f_k$ be corresponding eigenfunctions. Then, for any eigenvalue $\mu\in\sigma(\mathcal{L})\setminus\{\lambda_1,\ldots,\lambda_k,\bar\lambda_1,\ldots,\bar\lambda_k\}$,  \[\min\limits_{f\in\mathrm{span}(f_1,\ldots,f_k,\bar f_1,\ldots,\bar f_k)^{\bot_P}\setminus\{\mathbf{0}\}}\frac{\|\mathcal{L}f\|_2}{\|f\|_2}\le |\mu|\le \max\limits_{f\in\mathrm{span}(f_1,\ldots,f_k,\bar f_1,\ldots,\bar f_k)^{\bot_P}\setminus\{\mathbf{0}\}}\frac{\|\mathcal{L}f\|_2}{\|f\|_2},\]
where $\|\cdot\|_2$ indicates the standard $l^2$-norm on $\mathbb{C}^{2M}$. 
\end{proposition}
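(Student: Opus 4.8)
The plan is to mimic the Courant--Fischer min--max characterization, but adapted to the $P$-product geometry. The key point is that $\mathcal{L}$ is self-adjoint with respect to $(\cdot,\cdot)_P$, so its eigenfunctions behave, up to the subtlety that $P$ is indefinite and $(f,f)_P$ may vanish, like an orthogonal basis. First I would invoke Lemma \ref{lemma:a}: the eigenfunctions $f_1,\dots,f_k$ together with the conjugate eigenpairs, which correspond to the conjugate eigenvalues $\bar\lambda_1,\dots,\bar\lambda_k$, span a subspace $W:=\mathrm{span}(f_1,\dots,f_k,\bar f_1,\dots,\bar f_k)$ that is ``$P$-closed'' in the sense that $\mathcal{L}$ maps $W^{\bot_P}$ into itself. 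Indeed, for $f\in W^{\bot_P}$ and any $g\in W$, self-adjointness gives $(\mathcal{L}f,g)_P=(f,\mathcal{L}g)_P$, and $\mathcal{L}g\in W$ since $W$ is spanned by eigenfunctions whose eigenvalues, together with their conjugates, are already in the list; hence $(\mathcal{L}f,g)_P=0$, i.e.\ $\mathcal{L}f\in W^{\bot_P}$.

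Next I would argue that $\mathcal{L}|_{W^{\bot_P}}$ still has a full eigenbasis consisting of eigenfunctions of $\mathcal{L}$. The cleanest route: since $\mathcal{L}P$ is a symmetric real matrix (by the lemma preceding the Proposition on circularly partite graphs, or rather by the earlier ``$\mathcal{L}P$ is symmetric'' lemma), $\mathcal{L}$ is diagonalizable over $\mathbb{C}$ with eigenfunctions $g_1,\dots,g_{2M}$ that can be chosen $P$-orthonormal in the generalized sense; removing from this basis the vectors whose eigenvalues lie in $\{\lambda_1,\dots,\lambda_k,\bar\lambda_1,\dots,\bar\lambda_k\}$ — and there are exactly $2k$ counting multiplicity, matching $\dim W$ under a genericity argument, or else one argues directly that $W^{\bot_P}$ is spanned by the remaining eigenfunctions — leaves a spanning set of eigenfunctions for $W^{\bot_P}$, with $\mu$ among the corresponding eigenvalues. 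Then for any eigenfunction $g$ in this reduced basis with eigenvalue $\nu$, we have $\|\mathcal{L}g\|_2/\|g\|_2=|\nu|$, and $\mu$ is one such $\nu$; so $|\mu|$ lies between the minimum and maximum of $\|\mathcal{L}f\|_2/\|f\|_2$ over the reduced eigenbasis. The final step is to upgrade from ``over the reduced eigenbasis'' to ``over all of $W^{\bot_P}\setminus\{\mathbf{0}\}$'': the max/min of the Rayleigh-type quotient $\|\mathcal{L}f\|_2/\|f\|_2$ over a subspace $W^{\bot_P}$ is attained and equals the max/min of $|\nu|$ over eigenvalues of the restricted map only if $W^{\bot_P}$ is spanned by eigenfunctions and $\mathcal{L}$ restricted there behaves well with respect to the \emph{standard} $\ell^2$ structure — which is \emph{not} automatic, since the eigenbasis is $P$-orthonormal, not $\ell^2$-orthonormal. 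I would therefore instead prove the two inequalities directly: $|\mu|\le \max_{f\in W^{\bot_P}\setminus\{\mathbf 0\}}\|\mathcal Lf\|_2/\|f\|_2$ is immediate because $g_\mu\in W^{\bot_P}$ realizes the ratio $|\mu|$, and symmetrically $|\mu|\ge \min_{f\in W^{\bot_P}\setminus\{\mathbf 0\}}\|\mathcal Lf\|_2/\|f\|_2$ for the same reason. That is the whole content: the min and max are over the subspace, and $g_\mu$ is just one witness inside it, so the chain of inequalities is trivial once we know $g_\mu\in W^{\bot_P}$.

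So the substantive claim reduces to: \emph{the eigenfunction $g_\mu$ associated to $\mu$ can be chosen in $W^{\bot_P}$}. This follows from Lemma \ref{lemma:a} applied pairwise — $(g_\mu,f_i)_P=0$ because $\bar\mu\ne\lambda_i$ would need checking, but in fact the relevant condition from the lemma is $\bar{\bar\mu}\neq\lambda_i$ i.e.\ $\mu\neq\bar\lambda_i$, which holds since $\mu\notin\{\bar\lambda_1,\dots,\bar\lambda_k\}$; and $(g_\mu,\bar f_i)_P=0$ because $\bar\mu\neq\bar\lambda_i$, equivalently $\mu\neq\lambda_i$, which holds since $\mu\notin\{\lambda_1,\dots,\lambda_k\}$ — here one uses that $(\bar\lambda_i,\bar f_i)$ is again an eigenpair, from $\mathcal{L}$ being real. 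Wait — one must be careful whether $\mathcal{L}$ is a real matrix; it is, since its entries are $-\mathbb{P}(e_i\to e_j)$ or $1$ on the diagonal, all real. Hence complex-conjugating an eigenpair yields an eigenpair, and Lemma \ref{lemma:a} applies to $(\mu,g_\mu)$ against each of $(\lambda_i,f_i)$ and $(\bar\lambda_i,\bar f_i)$, giving $g_\mu\in W^{\bot_P}$.

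\textbf{Main obstacle.} The one genuine gap is the dimension count ensuring $W^{\bot_P}$ is nontrivial and actually contains $g_\mu$ as a genuine element rather than merely being $P$-orthogonal to a degenerate span — i.e.\ handling the possibility that $(\cdot,\cdot)_P$ restricted to $W$ is degenerate, so that $\dim W^{\bot_P}$ could exceed $2M-\dim W$ or the codimension bookkeeping fails. I expect this to be resolved by noting that $\mathcal{L}P$ symmetric means $\mathbb{C}^{2M}$ decomposes as an $\mathcal{L}$-invariant, $P$-orthogonal direct sum of the eigenspaces of $\mathcal{L}$, with $(\cdot,\cdot)_P$ restricting nondegenerately to the sum of any conjugation-closed collection of eigenspaces; applying this to $W$ (which is conjugation-closed by construction) gives $\mathbb{C}^{2M}=W\oplus W^{\bot_P}$ cleanly, and $g_\mu\in W^{\bot_P}$ follows. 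I would state this decomposition as the key lemma and derive the Proposition from it in two lines.
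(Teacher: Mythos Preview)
Your core argument (paragraphs two and three) is exactly the paper's proof: Lemma~\ref{lemma:a} places an eigenfunction $g$ for $\mu$ inside $W^{\bot_P}$, and then $\|\mathcal{L}g\|_2/\|g\|_2=|\mu|$ is one value of the ratio taken over $W^{\bot_P}\setminus\{\mathbf 0\}$, hence trapped between its min and max. Your ``main obstacle'' is illusory --- no decomposition $\mathbb{C}^{2M}=W\oplus W^{\bot_P}$, no nondegeneracy of $(\cdot,\cdot)_P$ on $W$, and no dimension count is needed, because the inequality follows the instant you have a single nonzero witness in $W^{\bot_P}$ realizing $|\mu|$, regardless of how large or degenerate $W^{\bot_P}$ may be; the invariance and diagonalizability discussion in your first paragraph is likewise unnecessary.
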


\begin{proof}
 Since $\mathcal{L}$ is a real matrix, $\bar\lambda_1,\ldots,\bar\lambda_k$ are also eigenvalues of $\mathcal{L}$, and the functions $\bar f_1,\ldots,\bar f_k$ are corresponding  eigenfunctions. Now, fix an eigenfunction $g$ for $\mu$. By Lemma \ref{lemma:a}, $(f_i,g)_P=0$ and $(\bar f_i,g)_P=0$, for all $i=1,\ldots,k$. Therefore, $(f,g)_P=0$, for all $f\in \mathrm{span}(f_1,\ldots,f_k,\bar f_1,\ldots,\bar f_k)$, that is, $g\in \mathrm{span}(f_1,\ldots,f_k,\bar f_1,\ldots,\bar f_k)^{\bot_P}$. Since  $\|\mathcal{L}g\|_2=\|\mu g\|_2=|\mu|\cdot \|g\|_2$, we have 
\[\min\limits_{f\in\mathrm{span}(f_1,\ldots,f_k,\bar f_1,\ldots,\bar f_k)^{\bot_P}\setminus\{\mathbf{0}\}}\frac{\|\mathcal{L}f\|_2}{\|f\|_2}\le \frac{\|\mathcal{L}g\|_2}{\|g\|_2}=|\mu|\le \max\limits_{f\in\mathrm{span}(f_1,\ldots,f_k,\bar f_1,\ldots,\bar f_k)^{\bot_P}\setminus\{\mathbf{0}\}}\frac{\|\mathcal{L}f\|_2}{\|f\|_2}.\]
\end{proof}

In the same way, one can prove the following estimate on the distance between a set of eigenvalues of $\mathcal{L}$ and a given real number $a$.

\begin{proposition}\label{pro:estimate-a-L}
Let $\lambda_1,\ldots,\lambda_k$ be $k< 2M$ eigenvalues of $\mathcal{L}$, let $f_1,\ldots,f_k$ be corresponding eigenfunctions, and let $a\in\mathbb{R}$. Then, for any eigenvalue $\mu\in\sigma(\mathcal{L})\setminus\{\lambda_1,\ldots,\lambda_k,\bar\lambda_1,\ldots,\bar\lambda_k\}$, 
  \[\min\limits_{f\in\mathrm{span}(f_1,\ldots,f_k,\bar f_1,\ldots,\bar f_k)^{\bot_P}\setminus\{\mathbf{0}\}}\frac{\|(a\,\mathrm{Id}-\mathcal{L})f\|_2}{\|f\|_2}\le |\mu-a|\] 
and 
\[ |\mu-a|\le \max\limits_{f\in\mathrm{span}(f_1,\ldots,f_k,\bar f_1,\ldots,\bar f_k)^{\bot_P}\setminus\{\mathbf{0}\}}\frac{\|(a\,\mathrm{Id}-\mathcal{L})f\|_2}{\|f\|_2}.\]
\end{proposition}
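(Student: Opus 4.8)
\textbf{Proof proposal for Proposition \ref{pro:estimate-a-L}.}

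The plan is to mirror the argument of Proposition \ref{prop:estimate-1} almost verbatim, replacing the operator $\mathcal{L}$ by the shifted operator $a\,\mathrm{Id}-\mathcal{L}$ and tracking the effect of this shift on the relevant eigendata. First I would observe that, since $a\in\mathbb{R}$, a function $f$ is an eigenfunction of $\mathcal{L}$ with eigenvalue $\lambda$ if and only if $f$ is an eigenfunction of $a\,\mathrm{Id}-\mathcal{L}$ with eigenvalue $a-\lambda$; in particular the given eigenfunctions $f_1,\dots,f_k$ (and their conjugates $\bar f_1,\dots,\bar f_k$, which are eigenfunctions of $\mathcal{L}$ because $\mathcal{L}$ is a real matrix) are exactly the eigenfunctions of $a\,\mathrm{Id}-\mathcal{L}$ associated to the eigenvalues $a-\lambda_i$ and $a-\bar\lambda_i$.

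Next I would verify that the $P$-orthogonality input of Lemma \ref{lemma:a} still applies. Since $\mathrm{Id}$ is trivially self-adjoint with respect to the $P$-product and $\mathcal{L}$ is self-adjoint with respect to it by Theorem 3.9 in \cite{NB-Laplacian}, the operator $a\,\mathrm{Id}-\mathcal{L}$ is also $P$-self-adjoint. Fix an eigenfunction $g$ of $\mathcal{L}$ for $\mu$, equivalently an eigenfunction of $a\,\mathrm{Id}-\mathcal{L}$ for $a-\mu$. Because $\mu\notin\{\lambda_1,\dots,\lambda_k,\bar\lambda_1,\dots,\bar\lambda_k\}$, we have $a-\mu\neq a-\lambda_i$ and, taking conjugates and using $a\in\mathbb{R}$, also $\overline{a-\mu}=a-\bar\mu\neq a-\lambda_i$, so the hypothesis $\bar{(\cdot)}\neq(\cdot)$ of Lemma \ref{lemma:a} holds for each pair $(a-\lambda_i,a-\mu)$ and $(a-\bar\lambda_i,a-\mu)$. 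Hence $(f_i,g)_P=0$ and $(\bar f_i,g)_P=0$ for all $i$, so $g\in\mathrm{span}(f_1,\dots,f_k,\bar f_1,\dots,\bar f_k)^{\bot_P}$.

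Finally I would conclude by the same norm computation: since $(a\,\mathrm{Id}-\mathcal{L})g=(a-\mu)g$, we get $\|(a\,\mathrm{Id}-\mathcal{L})g\|_2=|a-\mu|\cdot\|g\|_2=|\mu-a|\cdot\|g\|_2$, and therefore
\[
\min_{f\in W\setminus\{\mathbf{0}\}}\frac{\|(a\,\mathrm{Id}-\mathcal{L})f\|_2}{\|f\|_2}\le\frac{\|(a\,\mathrm{Id}-\mathcal{L})g\|_2}{\|g\|_2}=|\mu-a|\le\max_{f\in W\setminus\{\mathbf{0}\}}\frac{\|(a\,\mathrm{Id}-\mathcal{L})f\|_2}{\|f\|_2},
\]
where $W:=\mathrm{span}(f_1,\dots,f_k,\bar f_1,\dots,\bar f_k)^{\bot_P}$, which is exactly the claimed pair of inequalities. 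I do not anticipate a genuine obstacle here; the only point needing a little care is checking that the shift preserves the distinctness hypothesis of Lemma \ref{lemma:a} in both the $\lambda_i$ and $\bar\lambda_i$ slots, which is where the reality of $a$ is used.
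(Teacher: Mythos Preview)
Your proof is correct and follows exactly the approach the paper intends: the paper's own proof simply reads ``Analogous to the proof of Proposition \ref{prop:estimate-1},'' and you have faithfully spelled out that analogy. One minor simplification: you need not pass through $P$-self-adjointness of $a\,\mathrm{Id}-\mathcal{L}$ at all, since Lemma \ref{lemma:a} applied directly to $\mathcal{L}$ already gives $(f_i,g)_P=(\bar f_i,g)_P=0$ from the hypothesis $\mu\notin\{\lambda_i,\bar\lambda_i\}$, and the only place $a$ enters is the final norm identity $\|(a\,\mathrm{Id}-\mathcal{L})g\|_2=|\mu-a|\,\|g\|_2$.
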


\begin{proof}
    Analogous to the proof of Proposition \ref{prop:estimate-1}.
\end{proof}

Now, in Theorem \ref{thm:bound-2} below, we show that the modulus of every non-zero eigenvalue of $\mathcal{L}$ is bounded below by $s_2(\mathcal{L})$, and above by $s_{2M}(\mathcal{L})$.

\begin{theorem}\label{thm:bound-2}For every non-zero eigenvalue $\lambda$ of $\mathcal{L}$,
\[s_2(\mathcal{L}) \leq |\lambda|\leq s_{2M}(\mathcal{L}) .\]
\end{theorem}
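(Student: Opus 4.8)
The plan is to bound the modulus of a non-zero eigenvalue $\lambda$ of $\mathcal{L}$ by relating the ratio $\|\mathcal{L}f\|_2/\|f\|_2$ to the singular values of $\mathcal{L}$ via the min-max principle for the positive semidefinite matrix $\mathcal{L}^\top\mathcal{L}$. The upper bound $|\lambda|\le s_{2M}(\mathcal{L})$ is immediate: if $(\lambda,f)$ is an eigenpair then $|\lambda|\cdot\|f\|_2 = \|\mathcal{L}f\|_2 \le s_{2M}(\mathcal{L})\cdot\|f\|_2$, since $s_{2M}(\mathcal{L})$ is the operator norm of $\mathcal{L}$. The content is in the lower bound.

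For the lower bound, the naive estimate only gives $|\lambda|\ge s_1(\mathcal{L}) = 0$, which is vacuous — so one needs to use that $\lambda\ne 0$ to push $f$ out of a one-dimensional subspace and into the span of eigenvectors of $\mathcal{L}^\top\mathcal{L}$ with eigenvalue $\ge s_2^2(\mathcal{L})$. First I would note that, by the results recalled in the excerpt, $0$ is an eigenvalue of $\mathcal{L}$ and (under the connectedness assumption of this section, $\mathcal{G}$ strongly connected) has multiplicity exactly $1$; let $f_0$ be a corresponding eigenfunction, so $\mathcal{L}f_0 = 0$. The key point is that any eigenfunction $f$ of $\mathcal{L}$ with eigenvalue $\lambda\ne 0$ is orthogonal (in the standard Hermitian inner product on $\mathbb{C}^{2M}$, not the $P$-product) to $\ker(\mathcal{L}^\top)$, because $\mathrm{Im}(\mathcal{L})=\ker(\mathcal{L}^\top)^\perp$ and $f = \lambda^{-1}\mathcal{L}f \in \mathrm{Im}(\mathcal{L})$. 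Then $f$ lies in the span of the eigenvectors of $\mathcal{L}^\top\mathcal{L}$ associated with $\lambda_2(\mathcal{L}^\top\mathcal{L}),\ldots,\lambda_{2M}(\mathcal{L}^\top\mathcal{L})$ — more precisely, $f\perp \ker(\mathcal{L}^\top\mathcal{L})$, since $\ker(\mathcal{L}^\top\mathcal{L})=\ker(\mathcal{L})$ has dimension $1$ for a real matrix (as $\|\mathcal{L}f\|_2^2 = \langle \mathcal{L}^\top\mathcal{L}f,f\rangle$), hence equals $\ker(\mathcal{L}^\top)^{\perp}$... — wait, one must be careful that $\ker(\mathcal{L})$ and $\ker(\mathcal{L}^\top)$ need not coincide for a non-symmetric matrix. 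The clean route is: $f\in\mathrm{Im}(\mathcal{L}) = \ker(\mathcal{L}^\top)^\perp$, and $\mathcal{L}^\top\mathcal{L}$ restricted to $\ker(\mathcal{L}^\top)^\perp$ has all eigenvalues $\ge s_2^2(\mathcal{L})$: indeed $\ker(\mathcal{L}^\top\mathcal{L}) = \ker(\mathcal{L})$, and on the orthogonal complement of $\ker(\mathcal{L})$ the form $\langle\mathcal{L}^\top\mathcal{L}f,f\rangle/\langle f,f\rangle$ is bounded below by $s_2^2(\mathcal{L})$; then I check $\ker(\mathcal{L})\subseteq \ker(\mathcal{L}^\top)^\perp$, equivalently $\ker(\mathcal{L})\perp\ker(\mathcal{L}^\top)$ — this holds because both are one-dimensional and, using self-adjointness $\mathcal{L}^\top = P\mathcal{L}P$, we have $\ker(\mathcal{L}^\top) = P\ker(\mathcal{L})$, and $\ker(\mathcal{L})$ is spanned by the all-ones vector (constant on the strongly connected $\mathcal{G}$), which is fixed by $P$, so in fact $\ker(\mathcal{L}) = \ker(\mathcal{L}^\top)$. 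Hence $f\perp\ker(\mathcal{L})$ and $\|\mathcal{L}f\|_2^2 = \langle\mathcal{L}^\top\mathcal{L}f,f\rangle \ge s_2^2(\mathcal{L})\|f\|_2^2$, giving $|\lambda| = \|\mathcal{L}f\|_2/\|f\|_2 \ge s_2(\mathcal{L})$.

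The main obstacle is precisely this interplay between $\ker(\mathcal{L})$ and $\ker(\mathcal{L}^\top)$ for the non-symmetric operator $\mathcal{L}$; the clean resolution is the identity $\mathcal{L}^\top = P\mathcal{L}P$ from Theorem 3.9 of \cite{NB-Laplacian} together with the explicit description of the $0$-eigenspace as the constants, which shows the two kernels agree. One should also handle possibly complex eigenfunctions $f$: the argument above works verbatim over $\mathbb{C}$, since $\mathcal{L}$ and $\mathcal{L}^\top\mathcal{L}$ are real so the Hermitian form $\langle\mathcal{L}^\top\mathcal{L}f,f\rangle$ and the min-max characterization of singular values are valid on $\mathbb{C}^{2M}$. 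Alternatively, one can phrase the whole lower bound as an instance of Proposition \ref{prop:estimate-1} with $k=1$, $\lambda_1=0$, $f_1=\bar f_1$ the constant eigenfunction: any non-zero $\mu\in\sigma(\mathcal{L})$ satisfies $\mu\notin\{0\}$ and $\bar 0 = 0$, so $|\mu|\ge \min_{f\in\mathrm{span}(f_1)^{\bot_P}\setminus\{\mathbf 0\}}\|\mathcal{L}f\|_2/\|f\|_2$, and one identifies $\mathrm{span}(f_1)^{\bot_P}$ with $\mathrm{span}(f_1)^{\perp}$ (again because $Pf_1 = f_1$) on which the Rayleigh quotient of $\mathcal{L}^\top\mathcal{L}$ is $\ge s_2^2(\mathcal{L})$.
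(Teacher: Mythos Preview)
Your proposal is correct, and the alternative you sketch at the end---applying Proposition~\ref{prop:estimate-1} with $k=1$, $\lambda_1=0$, $f_1=\mathbf{1}_{\mathcal{V}}$, and then identifying $\mathrm{span}(\mathbf{1}_{\mathcal{V}})^{\bot_P}$ with the ordinary orthogonal complement via $P\mathbf{1}_{\mathcal{V}}=\mathbf{1}_{\mathcal{V}}$---is exactly the paper's argument.

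Your first route, via $f=\lambda^{-1}\mathcal{L}f\in\mathrm{Im}(\mathcal{L})=\ker(\mathcal{L}^\top)^\perp$ together with $\ker(\mathcal{L}^\top)=\ker(\mathcal{L})$, is a legitimate variant that reaches the same conclusion $f\perp\mathbf{1}_{\mathcal{V}}$ without invoking the $P$-orthogonality of eigenfunctions. One expository slip: the inclusion you set out to check, ``$\ker(\mathcal{L})\subseteq\ker(\mathcal{L}^\top)^\perp$'', is the wrong one (combined with the equality you then prove it would force the kernel to be self-orthogonal); what you actually need, and what you in fact establish, is $\ker(\mathcal{L})=\ker(\mathcal{L}^\top)$, after which $f\in\ker(\mathcal{L}^\top)^\perp=\ker(\mathcal{L})^\perp$ feeds directly into the min--max bound. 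Note also that the orthogonality $f\perp\mathbf{1}_{\mathcal{V}}$ is precisely Proposition~\ref{prop:0}, so one could cite that instead of either argument.
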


\begin{proof}
We know that $0$ is an eigenvalue of $\mathcal{L}$, and that the constant function $\mathbf{1}_{\mathcal{V}}$ that maps all vertices of $\mathcal{G}$ to $1$ is a corresponding eigenfunction. Therefore, in Proposition \ref{prop:estimate-1} we can choose $k=1$, $\lambda_1=0$ and $f_1=\mathbf{1}_{\mathcal{V}}$ to infer that
\begin{equation*}
    |\lambda|\geq \min\limits_{f\in\mathrm{span}(\mathbf{1}_{\mathcal{V}})^{\bot_P}\setminus\{\mathbf{0}\}}\frac{\|\mathcal{L}f\|_2}{\|f\|_2}.
\end{equation*}Also, since $(f,\mathbf{1}_{\mathcal{V}})_P=0$ if and only if $\langle f,\mathbf{1}_{\mathcal{V}}\rangle=0$, we have that \[\min\limits_{f\in\mathrm{span}(\mathbf{1}_{\mathcal{V}})^{\bot_P}\setminus\{\mathbf{0}\}}\frac{\|\mathcal{L}f\|_2}{\|f\|_2}=\min\limits_{f\in\mathrm{span}(\mathbf{1}_{\mathcal{V}})^{\bot}\setminus\{\mathbf{0}\}}\frac{\|\mathcal{L}f\|_2}{\|f\|_2}.\]
Moreover, since $\mathcal{L}^T\mathcal{L}$ is positive semidefinite, 
\begin{align*}
\min\limits_{f\in\mathrm{span}(\mathbf{1}_{\mathcal{V}})^{\bot}\setminus\{\mathbf{0}\}}\frac{\|\mathcal{L}f\|_2}{\|f\|_2}&=\sqrt{\min\limits_{f\in\mathrm{span}(\mathbf{1}_{\mathcal{V}})^{\bot}\setminus\{\mathbf{0}\}}\frac{\|\mathcal{L}f\|_2^2}{\|f\|_2^2}}\\&=\sqrt{\min\limits_{f\in\mathbb{R}^{2M}\cap\mathrm{span}(\mathbf{1}_{\mathcal{V}})^{\bot}\setminus\{\mathbf{0}\}}\frac{\langle \mathcal{L}^\top\mathcal{L}f,f\rangle}{\langle f,f\rangle }}
\\&=\sqrt{\lambda_2(\mathcal{L}^\top\mathcal{L})}\\&=s_2(\mathcal{L}).
\end{align*}
Putting everything together, this proves the lower bound $|\lambda|\ge s_2(\mathcal{L})$. The proof of the upper bound $|\lambda|\le s_{2M}(\mathcal{L})$ is analogous.
\end{proof}

We now generalize Theorem \ref{thm:bound-2} as follows.

\begin{theorem}
For every non-zero eigenvalue $\lambda$ of $\mathcal{L}$ and for every $a\in\R$,
\[s_2(a\,\mathrm{Id}-\mathcal{L}) \le |\lambda-a|\le s_{2M}(a\,\mathrm{Id}-\mathcal{L}).\]
\end{theorem}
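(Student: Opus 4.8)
The plan is to mimic the proof of Theorem~\ref{thm:bound-2} almost verbatim, replacing $\mathcal{L}$ by the shifted operator $a\,\mathrm{Id}-\mathcal{L}$ throughout and using Proposition~\ref{pro:estimate-a-L} in place of Proposition~\ref{prop:estimate-1}. First I would recall that $0$ is always an eigenvalue of $\mathcal{L}$ with eigenfunction the constant function $\mathbf{1}_{\mathcal{V}}$. Applying Proposition~\ref{pro:estimate-a-L} with $k=1$, $\lambda_1=0$, $f_1=\mathbf{1}_{\mathcal{V}}$, and the given real number $a$, we obtain for every non-zero eigenvalue $\lambda$ of $\mathcal{L}$ (noting $\lambda\notin\{0,\bar 0\}=\{0\}$, so the proposition applies)
\[
\min_{f\in\mathrm{span}(\mathbf{1}_{\mathcal{V}})^{\bot_P}\setminus\{\mathbf{0}\}}\frac{\|(a\,\mathrm{Id}-\mathcal{L})f\|_2}{\|f\|_2}\le |\lambda-a|\le \max_{f\in\mathrm{span}(\mathbf{1}_{\mathcal{V}})^{\bot_P}\setminus\{\mathbf{0}\}}\frac{\|(a\,\mathrm{Id}-\mathcal{L})f\|_2}{\|f\|_2}.
\]

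Next, I would convert the $P$-orthogonality constraint into ordinary orthogonality: since $P$ maps $\mathbf{1}_{\mathcal{V}}$ to itself (it just swaps $[v,w]$ and $[w,v]$), one has $(f,\mathbf{1}_{\mathcal{V}})_P=\langle f,P\mathbf{1}_{\mathcal{V}}\rangle=\langle f,\mathbf{1}_{\mathcal{V}}\rangle$, so $\mathrm{span}(\mathbf{1}_{\mathcal{V}})^{\bot_P}=\mathrm{span}(\mathbf{1}_{\mathcal{V}})^{\bot}$, exactly as in the proof of Theorem~\ref{thm:bound-2}. Then, because $(a\,\mathrm{Id}-\mathcal{L})^\top(a\,\mathrm{Id}-\mathcal{L})$ is real symmetric positive semidefinite, the minimum (resp.\ maximum) of $\|(a\,\mathrm{Id}-\mathcal{L})f\|_2^2/\|f\|_2^2$ over the real subspace $\mathbb{R}^{2M}\cap\mathrm{span}(\mathbf{1}_{\mathcal{V}})^{\bot}$ is computed by the Courant--Fischer min-max principle; since we are restricting to a codimension-one subspace, the minimum equals the second smallest eigenvalue $\lambda_2\big((a\,\mathrm{Id}-\mathcal{L})^\top(a\,\mathrm{Id}-\mathcal{L})\big)$ and the maximum equals the largest eigenvalue $\lambda_{2M}\big((a\,\mathrm{Id}-\mathcal{L})^\top(a\,\mathrm{Id}-\mathcal{L})\big)$. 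Taking square roots gives $s_2(a\,\mathrm{Id}-\mathcal{L})$ and $s_{2M}(a\,\mathrm{Id}-\mathcal{L})$ respectively, yielding the claimed two-sided bound.

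The one subtlety worth checking—and the only place that differs from Theorem~\ref{thm:bound-2}—is that the complex-to-real reduction of the min-max quotient still works: for a real positive semidefinite symmetric matrix $S$ and a real subspace $W$, $\min_{f\in W_{\mathbb{C}}\setminus\{0\}}\langle Sf,f\rangle/\langle f,f\rangle$ equals $\min_{f\in W\setminus\{0\}}\langle Sf,f\rangle/\langle f,f\rangle$, because the Rayleigh quotient of a Hermitian (here real symmetric) matrix attains its extrema on real eigenvectors, and $\mathrm{span}(\mathbf{1}_{\mathcal{V}})^{\bot}$ is the complexification of its real part. This is precisely the identity used (without further comment) in the proof of Theorem~\ref{thm:bound-2}, so I would simply invoke it the same way. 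I do not anticipate a genuine obstacle here; the result is a routine shift-by-$a$ generalization, and the proof can honestly be stated as ``analogous to the proof of Theorem~\ref{thm:bound-2}, using Proposition~\ref{pro:estimate-a-L} in place of Proposition~\ref{prop:estimate-1}.''
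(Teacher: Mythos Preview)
Your approach is exactly what the paper intends: invoke Proposition~\ref{pro:estimate-a-L} with $k=1$, $\lambda_1=0$, $f_1=\mathbf{1}_{\mathcal{V}}$, convert $\bot_P$ to ordinary $\bot$ via $P\mathbf{1}_{\mathcal{V}}=\mathbf{1}_{\mathcal{V}}$, and then read off singular values of $a\,\mathrm{Id}-\mathcal{L}$ from the Rayleigh quotient. The upper bound goes through without difficulty, since $\max_{f\perp\mathbf{1}_{\mathcal{V}}}\|(a\,\mathrm{Id}-\mathcal{L})f\|_2/\|f\|_2\le s_{2M}(a\,\mathrm{Id}-\mathcal{L})$ holds trivially.

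There is, however, a genuine gap in your lower-bound step, and it is precisely the point where the analogy with Theorem~\ref{thm:bound-2} is \emph{not} automatic. You assert that ``since we are restricting to a codimension-one subspace, the minimum equals the second smallest eigenvalue.'' Courant--Fischer only gives $\min_{f\in W}R(f)\le\lambda_2$ for a codimension-one subspace $W$; equality requires $W$ to be the orthogonal complement of an eigenvector for $\lambda_1$. In Theorem~\ref{thm:bound-2} this is satisfied because $\mathcal{L}\mathbf{1}_{\mathcal{V}}=0$ forces $s_1(\mathcal{L})=0$ with eigenvector $\mathbf{1}_{\mathcal{V}}$. For general $a$ one still has (using $\mathcal{L}\mathbf{1}_{\mathcal{V}}=\mathcal{L}^\top\mathbf{1}_{\mathcal{V}}=0$) that $\mathbf{1}_{\mathcal{V}}$ is an eigenvector of $(a\,\mathrm{Id}-\mathcal{L})^\top(a\,\mathrm{Id}-\mathcal{L})$, but with eigenvalue $a^2$, which need not be the smallest singular value squared; for instance when $a=1$ and $\Delta\ge 3$ one has $a^2=1>\frac{1}{(\Delta-1)^2}=s_1^2$. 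In that situation the minimum of the Rayleigh quotient over $\mathbf{1}_{\mathcal{V}}^\perp$ is $s_1^2$, not $s_2^2$, and your chain of inequalities yields only $|\lambda-a|\ge s_1(a\,\mathrm{Id}-\mathcal{L})$. To reach $s_2$ you would need an additional argument (e.g.\ that $|a|=s_1(a\,\mathrm{Id}-\mathcal{L})$, or that $s_1=s_2$); the paper's one-line ``analogous'' proof is silent on this same point, so the gap is shared rather than introduced by you.
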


\begin{proof}
By Proposition \ref{pro:estimate-a-L}, it suffices to estimate the quantity 
\[\frac{\|(a\,\mathrm{Id}-\mathcal{L})f\|_2}{\|f\|_2}.\]
The rest of the proof is analogous to that of Theorem \ref{thm:bound-2}.
\end{proof}

\begin{remark}\label{remark:1-L} 
Not only the above theorem generalizes Theorem \ref{thm:bound-2} above, but also Theorem 4.1 in \cite{NB-Laplacian}, which states that the spectral gap from $1$ is larger or equal than $\frac{1}{\Delta-1}$. This follows, in particular, by taking $a=1$ and by showing that $s_2(\mathrm{Id}-\mathcal{L})=\frac{1}{\Delta-1}$, as follows.\newline
As shown in the proof of Theorem \ref{thm:out-indepen}, for any $f:\mathcal{V}\to\R$, we have 
\begin{align*}
&\langle (\mathcal{D}^{-1}\mathcal{A})^\top\mathcal{D}^{-1}\mathcal{A}f,f\rangle\\
&=\sum_{[v,w]\in\mathcal{V}}\frac{1}{\deg v-1}\cdot f^2([v,w])+\sum_{[v,w]\ne [v,w']\in\mathcal{V}}2\cdot \frac{\deg v-2}{(\deg v-1)^2}\cdot f([v,w])f([v,w'])
\\&= \sum_{[v,w]\in\mathcal{V}}\frac{1}{(\deg v-1)^2}\cdot f^2([v,w])+\sum_{v\in V}\left(\sum_{w\sim v}f([v,w])\right)^2
\\&\ge \sum_{[v,w]\in\mathcal{V}}\frac{1}{(\deg v-1)^2}\cdot f^2([v,w])\\ &\ge \frac{1}{(\Delta-1)^2}\langle f,f\rangle.
\end{align*}
Now, fix three distinct vertices $v,w,w'\in V$ such that $\deg v=\Delta$ and $\{v,w\},\{v,w'\}\in E$. Let then $f:V\to\R$ be such that $f([v,w]):=1$, $f([v,w']):=-1$, and it is zero otherwise. Then, $f\in \mathrm{span}(\mathbf{1}_{\mathcal{V}})^{\bot}\setminus\{\mathbf{0}\}$ and the above inequality reduces to an equality. 
Therefore, 
\[\lambda_2((\mathcal{D}^{-1}\mathcal{A})^\top\mathcal{D}^{-1}\mathcal{A})=\min\limits_{f\in\mathrm{span}(\mathbf{1}_{\mathcal{V}})^{\bot}\setminus\{\mathbf{0}\}}\frac{\langle \mathcal{L}^\top\mathcal{L}f,f\rangle}{\langle f,f\rangle }=\frac{1}{(\Delta-1)^2}.\]
Hence, $s_2(\mathrm{Id}-\mathcal{L})=\sqrt{\lambda_2((\mathcal{D}^{-1}\mathcal{A})^\top\mathcal{D}^{-1}\mathcal{A})}=\frac{1}{\Delta-1}$. 
\end{remark}

\section*{Acknowledgments}
In 2022, Raffaella Mulas and Leo Torres gave a course on non-backtracking operators of graphs at the Max Planck Institute for Mathematics in the Sciences. As part of the course material, and as a follow-up to their first joint article, they prepared a list of open questions on the non-backtracking Laplacian that led to many discussions and exchanging of ideas with some of the course attendees, and eventually to this paper. As such, we would particularly like to express our gratitude to Leo Torres for his indirect contribution to this work. We would also like to thank Florentin Münch, Jiaxi Nie and, in particular, Janis Keck for the helpful comments and interesting discussions. Last (and least), we are grateful to Conor Finn for helping us choosing a suggestive title which we hope will not be rejected.

\bibliographystyle{plain} 

\bibliography{NB}

\begin{thebibliography}{10}

\bibitem{Abiad2021}
A.~Abiad, R.~Mulas, and D.~Zhang.
\newblock Coloring the normalized {L}aplacian for oriented hypergraphs.
\newblock {\em Linear Algebra Appl.}, 629:192--207, 2021.

\bibitem{Arrigo1}
F.~Arrigo, D.J. Higham, and V.~Noferini.
\newblock Beyond non-backtracking: non-cycling network centrality measures.
\newblock {\em Proceedings. Mathematical, Physical, and Engineering Sciences},
  476, 2020.

\bibitem{Arrigo2}
F.~Arrigo, D.J. Higham, V.~Noferini, and R.~Wood.
\newblock Weighted enumeration of nonbacktracking walks on weighted graphs.
\newblock {\em arXiv preprint arXiv:2202.02888}, 2022.

\bibitem{backhausz2015ramanujan}
{\'A}.~Backhausz, B.~Szegedy, and B.~Vir{\'a}g.
\newblock Ramanujan graphings and correlation decay in local algorithms.
\newblock {\em Random Structures \& Algorithms}, 47(3):424--435, 2015.

\bibitem{bass1992ihara}
H.~Bass.
\newblock {The Ihara-Selberg zeta function of a tree lattice}.
\newblock {\em International Journal of Mathematics}, 3(06):717--797, 1992.

\bibitem{bordenave2018nonbacktracking}
C.~Bordenave, M.~Lelarge, and L.~Massouli{\'e}.
\newblock {Nonbacktracking spectrum of random graphs: Community detection and
  nonregular Ramanujan graphs}.
\newblock {\em Annals of probability: An official journal of the Institute of
  Mathematical Statistics}, 46(1):1--71, 2018.

\bibitem{castellano2018relevance}
C.~Castellano and R.~Pastor-Satorras.
\newblock Relevance of backtracking paths in recurrent-state epidemic spreading
  on networks.
\newblock {\em Physical Review E}, 98(5):052313, 2018.

\bibitem{cooper2009}
Y.~Cooper.
\newblock {Properties determined by the Ihara zeta function of a graph}.
\newblock {\em The Electronic Journal of Combinatorics}, 16(1):R84, 2009.

\bibitem{coste2021eigenvalues}
S.~Coste and Y.~Zhu.
\newblock Eigenvalues of the non-backtracking operator detached from the bulk.
\newblock {\em Random Matrices: Theory and Applications}, 10(03):2150028, 2021.

\bibitem{Cvetkovic1971}
D.M. Cvetkovi\'{c}.
\newblock Graphs and their spectra.
\newblock {\em Univ. Beograd. Publ. Elektrotehn. Fak. Ser. Mat. Fiz.},
  (354--356):1--50, 1971.

\bibitem{glover2021some}
C.~Glover and M.~Kempton.
\newblock Some spectral properties of the non-backtracking matrix of a graph.
\newblock {\em Linear Algebra and its Applications}, 618:37--57, 2021.

\bibitem{Godsil16}
C.~Godsil and K.~Meagher.
\newblock {\em {Erd\H{o}s-{K}o-{R}ado theorems: algebraic approaches}}, volume
  149 of {\em Cambridge Studies in Advanced Mathematics}.
\newblock Cambridge University Press, Cambridge, 2016.

\bibitem{Gronford1}
P.~Grindrod, D.J. Higham, and V.~Noferini.
\newblock {The Deformed Graph Laplacian and Its Applications to Network
  Centrality Analysis}.
\newblock {\em SIAM Journal on Matrix Analysis and Applications},
  39(1):310--341, 2018.

\bibitem{Harary1960}
F.~Harary and R.~Z. Norman.
\newblock Some properties of line digraphs.
\newblock {\em Rend. Circ. Mat. Palermo (2)}, 9:161--168, 1960.

\bibitem{hashimoto1989zeta}
K.~Hashimoto.
\newblock Zeta functions of finite graphs and representations of $p$-adic
  groups.
\newblock In {\em Automorphic Forms and Geometry of Arithmetic Varieties},
  volume~15 of {\em {A}dvanced {S}tudies in {P}ure {M}athematics}, pages
  211--280. Elsevier, 1989.

\bibitem{Huang19}
H.~Huang.
\newblock Induced subgraphs of hypercubes and a proof of the sensitivity
  conjecture.
\newblock {\em Ann. of Math. (2)}, 190(3):949--955, 2019.

\bibitem{NB-Laplacian}
J.~Jost, R.~Mulas, and L.~Torres.
\newblock {Spectral theory of the non-backtracking Laplacian for graphs}.
\newblock {\em Discrete Mathematics, To appear}, 2023.

\bibitem{krzakala2013spectral}
F.~Krzakala, C.~Moore, E.~Mossel, J.~Neeman, A.~Sly, L.~Zdeborov{\'a}, and
  P.~Zhang.
\newblock Spectral redemption in clustering sparse networks.
\newblock {\em Proceedings of the National Academy of Sciences},
  110(52):20935--20940, 2013.

\bibitem{martin2014localization}
T.~Martin, X.~Zhang, and M.E.J. Newman.
\newblock Localization and centrality in networks.
\newblock {\em Physical Review E}, 90(5):052808, 2014.

\bibitem{Mellor1}
A.~Mellor and A.~Grusovin.
\newblock Graph comparison via the nonbacktracking spectrum.
\newblock {\em Phys. Rev. E}, 99:052309, May 2019.

\bibitem{pastor2020localization}
R.~Pastor-Satorras and C.~Castellano.
\newblock The localization of non-backtracking centrality in networks and its
  physical consequences.
\newblock {\em Scientific reports}, 10(1):1--12, 2020.

\bibitem{shrestha2015message}
M.~Shrestha, S.V. Scarpino, and C.~Moore.
\newblock Message-passing approach for recurrent-state epidemic models on
  networks.
\newblock {\em Physical Review E}, 92(2):022821, 2015.

\bibitem{terras2010zeta}
A.~Terras.
\newblock {\em Zeta functions of graphs: {A} stroll through the garden}, volume
  128 of {\em {C}ambridge studies in advanced mathematics}.
\newblock Cambridge University Press, 2010.

\bibitem{Leo2007}
L.~Torres.
\newblock {Non-backtracking Spectrum: Unitary Eigenvalues and
  Diagonalizability}.
\newblock {\em arXiv preprint arXiv:2007.13611}, 2020.

\bibitem{torres-1}
L.~Torres.
\newblock Geometric multiplicity of unitary non-backtracking eigenvalues.
\newblock {\em arXiv preprint arXiv:2205.02004}, 2022.

\bibitem{torres2020node}
L.~Torres, K.S. Chan, H.~Tong, and T.~Eliassi-Rad.
\newblock Node immunization with non-backtracking eigenvalues.
\newblock {\em arXiv preprint arXiv:2002.12309}, 2020.

\bibitem{torres2021nonbacktracking}
L.~Torres, K.S. Chan, H.~Tong, and T.~Eliassi-Rad.
\newblock Nonbacktracking eigenvalues under node removal: X-centrality and
  targeted immunization.
\newblock {\em SIAM Journal on Mathematics of Data Science}, 3(2):656--675,
  2021.

\bibitem{torres2019non}
L.~Torres, P.~Su{\'a}rez-Serrato, and T.~Eliassi-Rad.
\newblock Non-backtracking cycles: length spectrum theory and graph mining
  applications.
\newblock {\em Applied Network Science}, 4(1):1--35, 2019.

\end{thebibliography}

\end{document}